\documentclass[12pt]{article}

\usepackage[dvipsnames]{xcolor}

\usepackage[margin={2cm,1.5cm}]{geometry} 

\usepackage{graphicx}
\usepackage{float}
\usepackage{subcaption}

\usepackage{tikz}

\usepackage{listings}

\usepackage{multirow}
\usepackage{arydshln} 

\usepackage[backend=bibtex,firstinits=true,sortcites,style=trad-plain]{biblatex}
\usepackage{microtype}

\lstdefinestyle{Python}
{language=Python}

\usepackage{enumerate}
\usepackage{amsmath,amsthm,amsfonts,bm}

\theoremstyle{plain}
\newtheorem{theorem}{Theorem}[section]
\newtheorem{proposition}[theorem]{Proposition}
\newtheorem{lemma}[theorem]{Lemma}
\newtheorem{corollary}[theorem]{Corollary}
\newtheorem{remark}[theorem]{Remark}

\theoremstyle{definition}

\newtheorem{hyp}{Hypothesis}

\usepackage{mathtools}
\newcommand{\R}{\mathbb{R}}

\newcommand{\bu}{\overline{u}}
\newcommand{\bmu}{\overline{\mu}}
\newcommand{\bn}{\overline{n}}
\newcommand{\buu}{\overline{\mathbf{u}}}
\newcommand{\bvv}{\overline{\mathbf{v}}}
\newcommand{\bw}{\overline{w}}
\newcommand{\bp}{\overline{p}}
\newcommand{\bphi}{\overline{\phi}}

\renewcommand{\div}{\nabla\cdot}

\newcommand{\nn}{\mathbf{n}}
\newcommand{\vv}{\mathbf{v}}
\newcommand{\uu}{\mathbf{u}}

\newcommand{\JJ}{\mathbf{J}}

\newcommand{\up}{\Pi_1^h u}

\newcommand{\uKs}{u_{K^*}}
\newcommand{\uL}{u_{L}}

\newcommand{\nKs}{n_{K^*}}

\newcommand{\wK}{w_{K}}

\newcommand{\wL}{w_{L}}

\newcommand{\muK}{\mu_{K}}
\newcommand{\muL}{\mu_{L}}
\newcommand{\mup}{\Pi_0\mu}

\newcommand{\Vh}{\mathcal{V}_h}
\newcommand{\Wh}{\mathcal{W}_h}
\newcommand{\Ph}{\mathcal{P}_h}

\newcommand{\nabland}{\nabla_{\nn_e}^0}

\usepackage[colorlinks=true]{hyperref}
\hypersetup{
linkcolor=BrickRed
,citecolor=Green
,filecolor=Mulberry
,urlcolor=NavyBlue
,menucolor=BrickRed
,runcolor=Mulberry
,linkbordercolor=BrickRed
,citebordercolor=Green
,filebordercolor=Mulberry
,urlbordercolor=NavyBlue
,menubordercolor=BrickRed
,runbordercolor=Mulberry
}

\def\escalar#1#2{\left(#1,#2\right)}

\def\escalarL#1#2{\escalar{#1}{#2}}
\def\escalarLd#1#2{\escalar{#1}{#2}}
\def\escalarML#1#2{\escalar{#1}{#2}_h}
\def\dualH#1#2{\langle#1,#2\rangle}

\def\T{\mathcal{T}}
\def\E{\mathcal{E}}

\def\N{\mathbb{N}}
\def\P{\mathbb{P}}
\def\X{\mathcal{X}}

\def\Ehi{\mathcal{E}_h^\text{i}}

\def\Pd{\mathbb{P}^{\text{disc}}}
\def\Pc{\mathbb{P}^{\text{cont}}}

\def\Pih{\Pi^h}

\def\norma#1{\left\|#1\right\|}
\def\normaL#1{\norma{#1}_{L^2(\Omega)}}

\def\salto#1{\left[\!\left[#1\right]\!\right]}
\def\media#1{\left\{\!\!\left\{#1\right\}\!\!\right\}}

\def\shd#1#2#3#4{s_h^{\eta}(#1,#2,#3,#4)}

\def\ch#1#2#3{c_h(#1,#2,#3)}

\def\aupw#1#2#3{a_h^{\text{upw}}(#1;#2,#3)}
\def\bupw#1#2#3{b_h^{\text{upw}}(#1;#2,#3)}

\def\sign#1{\text{sign}(#1)}

\def\Hdiv{\mathbf{H}_0(\text{div}, \Omega)}

\title{\textbf{Structure-preserving upwind DG scheme for a Cahn--Hilliard--Darcy model of tumor growth}}
\makeatletter
\def\@fnsymbol#1{\ensuremath{\ifcase#1\or *\or\dagger\or \ddagger\or \mathsection \or\mathparagraph\or *\or **\or \dagger\dagger \or \ddagger\ddagger \else\@ctrerr\fi}}
\makeatother

\author{Daniel Acosta-Soba\thanks{Departamento de Matemáticas, Universidad de Cádiz, Puerto Real, 11510 Cádiz, Spain -- Email: \texttt{daniel.acosta@uca.es} -- Corresponding author}~,
~Francisco Guillén-González\thanks{Departamento de Ecuaciones Diferenciales y Análisis Numérico \& IMUS, Universidad de Sevilla, 41012 Seville, Spain -- Email: \texttt{guillen@us.es}}~,
~J. Rafael Rodríguez-Galván\thanks{Departamento de Matemáticas, Universidad de Cádiz, Puerto Real, 11510 Cádiz, Spain -- Email: \texttt{rafael.rodriguez@uca.es}}}

\begin{document}

\maketitle

\begin{abstract}
In this work, we develop a structure-preserving numerical scheme for a Cahn--Hilliard--Darcy model that describes tumor growth in a fluid-saturated porous medium. First, we derive a physically consistent model from the general framework proposed in \cite{garcke2016cahn} that guarantees mass conservation and pointwise bounds on the phase-field and nutrient variables, with a decreasing energy law. The resulting model couples the evolution of tumor cells via a Cahn--Hilliard equation with a diffusion equation for the nutrients thro chemotactic interactions and extends the model in \cite{acosta2023structure} by introducing the effect of a surrounding fluid described by Darcy's law. Subsequently, we propose a fully discrete scheme that combines an upwind discontinuous Galerkin method in space and a convex splitting strategy in time, which inherits the fundamental properties of the continuous model: mass conservation, pointwise bounds and discrete energy law. Our theoretical analysis is accompanied by numerical experiments that demonstrate the robustness of the proposed scheme and show the influence of the surrounding fluid on the tumor evolution.
\end{abstract}

\paragraph{Keywords:} pointwise bounds; energy stability; mass conservation; inf-sup condition; upwind discontinuous Galerkin.

\section{Introduction}

Diffuse-interface models have recently been appointed as a successful alternative for tumor growth modeling. In this sense, considerable effort has been made in describing a general framework for the correct modeling and calibration of the diffuse-interface models of tumor growth, oriented to their possible physical application as summarized, for example, in \cite{hawkins2013bayesian,fritz2023tumor,oden2010general,oden2016toward} and the references therein. So far, there are several examples of success in this direction as different approaches have been capable of providing accurate enough results to be compared with real clinical data. Among these celebrated models we can find the work by Pozzi et al. \cite{pozzi2022t} where a Cahn--Hilliard equation is coupled with a Keller--Segel system; the works by Agosti et al. \cite{agosti2018computational,agosti2018personalized,agosti2020learning} where a Cahn-Hilliard equation for the tumor with nonsymmetric degenerate mobility (as the one shown in \cite{acosta2023structure}) is coupled with a diffusion-reaction equation for the nutrients; or the works of Lima et al. \cite{lima2017selection,lima2016selection} where phase-field models are compared against reaction-diffusion models regarding data prediction.

The complexity of these models vary depending on their constitutive assumptions and intrinsic limitations, but most of them are based on multicomponent mixture theory, where the phase-field variable is assumed to be a pointwise-bounded tumor volume fraction, and which accounts for the mass, momentum and energy balances for of each of the constituents. One can trace back these kind of thermodynamically consistent mixture models to the work of Wise et al., \cite{wise2008three} and the references therein. As a consequence of this pioneering work, many other models have arose taking into account different kind of processes and proposing simplifications. For instance, we can find \cite{frieboes2010three} where the model in \cite{wise2008three} is extended to describe angiogenesis and tumor invasion, the work by \cite{hawkins-daarud_numerical_2012} where the nutrients are included as a component of the mixture, or the more recent model in \cite{garcke2021phase} where mechanical effects are also taken into account, just to mention a few.

Among the existing literature, one can find different works which have tried to model the tumor tissue immersed in a fluid that transports the mixture of cells and nutrients. On the one hand, as initially proposed in \cite{wise2008three}, some authors have derived models relying on the Cahn--Hilliard--Darcy equations, \cite{garcke2016cahn, garcke2018multiphase,garcke2018cahn,frigeri2018multi}, where the tissue is assumed to behave as a porous medium. On the other hand, some alternatives have arose for the cases where the tissue cannot be modeled as porous medium, for instance, a Cahn--Hilliard--Brinkman model \cite{colli2023cahn,ebenbeck2021cahn,ebenbeck2019analysis} and, very recently, a Cahn--Hilliard--Navier--Stokes system \cite{elbar2024analysis}. In this sense, even more general models have been proposed where the previous approaches have been generalized to satisfy the Darcy--Forchheimer--Brinkman law, \cite{fritz2019unsteady}, or to introduce viscoelastic effects, \cite{garcke2022viscoelastic}.

However, increasing the complexity of the model leads to  more challenging mathematical problem.
As a consequence, not many
works have been able to provide a successful, unconditionally physically meaningful approximation of these kind of models involving tumor-nutrient interactions and fluid flows. 
In this regard, the works \cite{ebenbeck2021cahn,garcke2018multiphase,garcke2016cahn} propose a finite element bound-preserving discretization that involves solving a discrete variational inequality following the ideas in \cite{barrett1999finite} whereas \cite{garcke2022viscoelastic} introduce a finite element approximation that mimic some entropies of the model at the discrete level. On the contrary, in \cite{elbar2024analysis}, a combination of a suitable phase-field variable transformation, the time-discrete SAV approach and an upwind finite volume spatial discretization is used to preserve the pointwise bounds and the energy-stability in their approximations, under a CFL condition that may be difficult to check in advance.

In this work, we address the problem of developing a physically meaningful approximation for a tumor system coupled with a fluid equation. First, in Section~\ref{sec:derived_model} we derive a mass-conservative, pointwise-bounded and energy-stable Cahn--Hilliard--Darcy model from the more general model introduced by Garcke et al. in \cite{garcke2016cahn} under certain constitutive assumptions. For this purpose, we extend the ideas the we previously introduced in \cite{acosta2023structure} where a non-symmetric mobility and proliferation functions were used to modify the tumor model in \cite{hawkins-daarud_numerical_2012}. Afterwards, in Section~\ref{sec:numerical_approximation}, using the ideas in \cite{acosta2023structure,acosta2025property} we develope a structure-preserving approximation of the model that inherits all the aforementioned properties of the continuous system. Finally, in Section~\ref{sec:numerical_experiments}, we present some numerical experiments that show the good performance of the proposed scheme and its ability to capture the expected behavior of the model.

\section{Cahn--Hilliard--Darcy model}
\label{sec:derived_model}

In this section we derive a particular continuous Cahn--Hilliard--Darcy model from the general equations proposed in \cite{garcke2016cahn} using the ideas that we presented in \cite{acosta2023structure} to modify the tumor model introduced in \cite{hawkins-daarud_numerical_2012}. Moreover, we present the physical properties of the resulting model.

\subsection{General model}
H. Garcke et al. introduced in \cite{garcke2016cahn} the following diffuse interface model of tumor growth:
\begin{subequations}
	\label{problema:original_model}
	\begin{align}
		\label{eq:original_model_v}
		\vv&=-K(\nabla p+u\nabla\mu_u+n\nabla\partial_n N(u,n)) \quad&\text{in }\Omega\times (0,T), \\
		\label{eq:original_model_p}
		\nabla\cdot\vv&=\Gamma_{\vv} \quad&\text{in }\Omega\times (0,T),\\
		\label{eq:original_model_u}
		\partial_t u+\nabla\cdot(u\vv)&=\nabla\cdot\left(M_u(u)\nabla\mu_u\right)+\Gamma_u \quad&\text{in }\Omega\times (0,T),\\
		\label{eq:original_model_muu}
		\mu_u&=A\, F'(u)-B\,\Delta u+\partial_u N(u,n)\quad&\text{in }\Omega\times (0,T),\\
		\label{eq:original_model_n}
		\partial_t n+\nabla\cdot(n\vv)&=\nabla\cdot\left(M_n(n)\nabla \partial_nN(u,n)\right)-\mathcal{S} \quad&\text{in }\Omega\times (0,T),\\
		\label{eq:original_model_bc_u}
		\nabla u\cdot \mathbf{n}&=\left( M_u\nabla \mu_u\right)\cdot \mathbf{n}=0 \quad &\text{on }\partial\Omega\times (0,T),\\
		\label{eq:original_model_bc_n}
		\left( M_n\nabla \partial_n N(u,n)\right)\cdot \mathbf{n}&=c(n_\infty-n) \quad &\text{on }\partial\Omega\times (0,T),\\
		u(0)&=u_0,\quad n(0)=n_0\quad&\text{in }\Omega.
	\end{align}
\end{subequations}
Here, $\vv$ is the volume-averaged velocity; $p$ represents the pressure; $u$ is a phase-field variable that represents the volume fraction of tumor cells, where the region $\{x\in\Omega\colon u(x)=1\}$ represents the unmixed tumor and $\{x\in\Omega\colon u(x)=-1\}$, the pure healthy area; $n$ is the concentration of chemicals that supply nutrients to the tumor; and $\mu_u$ is the chemical potential of $u$. The definition of the remaining terms will be given later on in this section.

Let us do a change of variables $\tilde u = \frac{u+1}{2}$, equivalently, $u=2\tilde u-1$, so that $\tilde u$ is a phase-field function whose rank lies in $[0,1]$. Then, we can rewrite equations \eqref{eq:original_model_v} and \eqref{eq:original_model_p} as
\begin{align*}
	\vv&=-K(\nabla p+2\tilde u\nabla\mu_u-\nabla\mu_u+n\nabla\partial_n N(u,n))\nonumber\\&=-2K\left(\nabla \left(\frac{p}{2}-\frac{\mu_u}2\right)+\tilde u\nabla\mu_u+n\nabla\partial_n \left(\frac{N(u,n)}{2}\right)\right) \nonumber\\&= -\tilde K\left(\nabla \tilde p+\tilde u\nabla\mu_u+n\nabla\partial_n \tilde N(\tilde u,n)\right), \\
	\nabla\cdot\vv&=\Gamma_{\vv},
\end{align*}
where $\tilde K=2K$, $\tilde p=(p-\mu_u)/2$ and $\tilde N(\tilde u,n)=N(2\tilde u-1,n)/2$. Also, equations \eqref{eq:original_model_u} and \eqref{eq:original_model_muu} can be rewritten as
\begin{align*}
	2\partial_t \tilde u+2\nabla\cdot(\tilde u\vv)-\nabla\cdot\vv&=\nabla\cdot\left(M_u(u)\nabla\mu_u\right)+\Gamma_u, \\
	\mu_u&=A\, F'(u)-2B\,\Delta \tilde u+\partial_{u} 2\tilde N(\tilde u,n),
\end{align*}
therefore, using the equation \eqref{eq:original_model_p},
\begin{align*}
	\partial_t \tilde u+\nabla\cdot(\tilde u\vv)&=\nabla\cdot\left(\tilde M_{\tilde u}(\tilde u)\nabla\mu_u\right)+\frac{\Gamma_{\tilde u}+\Gamma_{\vv}}{2}, \\
	\mu_u&=A \partial_{\tilde u} F(u)\partial_u\tilde u-\tilde B\Delta \tilde u+2\partial_{\tilde u} \tilde N(\tilde u,n)\partial_u\tilde u\nonumber\\&=\frac{A}{2} \tilde F'(\tilde u)-\tilde B\Delta \tilde u+\partial_{\tilde u} \tilde N(\tilde u,n)\\&=\tilde A \tilde F'(\tilde u)-\tilde B\Delta \tilde u+\partial_{\tilde u} \tilde N(\tilde u,n),
\end{align*}
where $\tilde M_{\tilde u}(\tilde u)=M_u(2\tilde u-1)/2$, $\tilde F(\tilde u)=F(2\tilde u-1)$, $\tilde A=A/2$, $\tilde B=2B$ and $\Gamma_{\tilde u}=\Gamma_u$. Finally, equation \eqref{eq:original_model_n} can be rewritten as
\begin{align*}
	\partial_t n+\nabla\cdot(n\vv)&=\nabla\cdot\left(M_n(n)\nabla 2\partial_n \tilde N(\tilde u,n)\right)-\mathcal{S}\\&=\nabla\cdot\left(\tilde M_n(n)\nabla \partial_n \tilde N(\tilde u,n)\right)-\mathcal{S},
\end{align*}
where $\tilde M_n(n)=2M_n(n)$.

Consequently, abusing the notation and dropping the tildes, we can rewrite the system \eqref{problema:original_model} as
\begin{subequations}
	\label{problema:changed_model}
	\begin{align}
		\label{eq:changed_model_v}
		\vv&=-K(\nabla p+u\nabla\mu_u+n\nabla\partial_n N(u,n)) \quad&\text{in }\Omega\times (0,T), \\
		\label{eq:changed_model_p}
		\nabla\cdot\vv&=\Gamma_{\vv} \quad&\text{in }\Omega\times (0,T),\\
		\label{eq:changed_model_u}
		\partial_t u+\nabla\cdot(u\vv)&=\nabla\cdot\left(M_u\nabla\mu_u\right)+\frac{\Gamma_u+\Gamma_{\vv}}{2} \quad&\text{in }\Omega\times (0,T),\\
		\label{eq:changed_model_muu}
		\mu_u&=AF'(u)-B\Delta u+\partial_u N(u,n)\quad&\text{in }\Omega\times (0,T),\\
		\label{eq:changed_model_n}
		\partial_t n+\nabla\cdot(n\vv)&=\nabla\cdot\left(M_n\nabla \partial_nN(u,n)\right)-\mathcal{S} \quad&\text{in }\Omega\times (0,T),\\
		\label{eq:changed_model_bc_u}
		\nabla u\cdot \mathbf{n}&=\left( M_u\nabla \mu_u\right)\cdot \mathbf{n}=0 \quad &\text{on }\partial\Omega\times (0,T),\\
		\label{eq:changed_model_bc_n}
		\left( M_n\nabla \partial_n N(u,n)\right)\cdot \mathbf{n}&=c(n_\infty-n) \quad &\text{on }\partial\Omega\times (0,T),\\
		u(0)&=u_0,\quad n(0)=n_0\quad&\text{in }\Omega.
	\end{align}
\end{subequations}

Here, $u_0,n_0\in L^2(\Omega)$ are the initial conditions of the tumor and the nutrients and the boundary condition \eqref{eq:changed_model_bc_n} with  the given supply at the boundary $n_\infty$, and the coefficient $c\ge 0$, allows the entrance/exit of nutrients through the boundary. In particular, if $c=0$ we obtain the zero flux boundary condition and as long as $c\to\infty$ we approach the Dirichlet boundary condition $n=n_\infty$ on $\partial\Omega$.

Moreover, we assume that $F(u)$ is the potential of the phase-field equation, typically the Ginzburg-Landau double well potential, i.e $F(u)=\frac{1}{4}u^2(1-u)^2$, although other choices are possible (see \cite{garcke2016cahn}). Also, $N(u,n)$ models the contribution to the energy of the system of the interaction between the tumor tissue and the nutrients due to different phenomena such as chemotaxis (tumor cells are attracted by nutrients) or active transport of the nutrients (mechanism by which the nutrients are attracted by the tumors).

The terms $\Gamma_{\vv}$ and $\Gamma_u$ are related to the densities of the healthy and the tumor tissues, $\rho_{1}$ and $\rho_{2}$, respectively, as follows (see \cite{garcke2016cahn})
$$
\Gamma_{\vv}=\rho_{1}^{-1}\Gamma_{1}+\rho_{2}^{-1}\Gamma_{2},\quad \Gamma_u=\rho_{2}^{-1}\Gamma_{2}-\rho_{1}^{-1}\Gamma_{1}.
$$
Here, $\Gamma_{1}$ and $\Gamma_2$ are the source terms of the mass balance equations for each of the components of the mixture, healthy and tumor cells, respectively. On the other hand, $\mathcal{S}$ is a source/sink term for the nutrients. In addition, $K>0$ is the permeability coefficient of the tissue, and $A, B$ are nonnegative constants.

The density of the mixture $\rho$ satisfies the mass balance equation
\begin{equation}
	\label{eq:rho}
\partial_t\rho + \nabla\cdot(\rho\vv-\JJ)=\Gamma_{1}+\Gamma_2,
\end{equation}
where $\JJ=(\rho_2-\rho_1)M_u\nabla\mu_u$,
and it can be explicitly determined as
\begin{equation}
	\label{def:rho}
\rho(u)=\rho_1+(\rho_2-\rho_1)u.
\end{equation}

Notice that one boundary condition remains to be imposed in \eqref{problema:changed_model} to ensure its well-posedness. In this sense, if we introduce the natural Dirichlet boundary condition in the normal direction
\begin{equation}\label{bc-vn}
\vv\cdot\nn=g \quad \text{ on }\partial\Omega,
\end{equation}
this condition must satisfy the compatibility restriction 
\begin{equation}
\label{comp_cond}	
\int_{\partial\Omega} g=\int_\Omega\Gamma_{\vv},
\end{equation}
due to \eqref{eq:changed_model_p}. 

\begin{remark}
	Notice that we can regard the equations \eqref{eq:changed_model_v}--\eqref{eq:changed_model_p} as an elliptic problem for the pressure variable. In fact, if we take the divergence of \eqref{eq:changed_model_v} we obtain, using \eqref{eq:changed_model_p}, that
	$$
	-K\Delta p = \nabla\cdot(u\nabla\mu_u)+\nabla\cdot(n\nabla\partial_n N(u,n)) + \Gamma_\vv\quad \text{in }\Omega\times(0,T).
	$$
	Furthermore, in order to avoid restriction \eqref{comp_cond}, one may exchange boundary condition \eqref{bc-vn} by an artificial boundary condition on the pressure like
\begin{equation}
	p=g\text{ on }\partial\Omega,
\end{equation}
as was suggested  in \cite{garcke2016cahn}.
\end{remark}

\subsection{Constitutive assumptions}

From now on we will make the several considerations that will lead us to a generalized version of the model in \cite{acosta2023structure}. We refer the reader to \cite{garcke2016cahn} to explore other possibilities.

First, we assume that the total mass of the mixture may vary depending on the relation between the densities of each of the states (tumor and healthy cells) as $\Gamma_1=-\frac{\rho_1}{\rho_2}\Gamma$, where $\Gamma\coloneqq\Gamma_2$ so that \eqref{eq:rho} becomes
\begin{equation}\label{masa-b}
\partial_t\rho + \nabla\cdot(\rho\vv-\JJ)=\left(1-\frac{\rho_1}{\rho_2}\right)\Gamma
= (\rho_2-\rho_1)\frac{\Gamma_u}2,
\end{equation}
which implies that the variation of mass of the mixture follows
$$
\frac{d}{dt}\int_\Omega\rho
=(\rho_2-\rho_1)\, \frac{d}{dt}\int_\Omega u .
$$
and the terms $\Gamma_\vv$ and $\Gamma_u$ satisfy
$$
\Gamma_\vv=0,\quad\Gamma_u=\frac{2}{\rho_2}\Gamma.
$$

Then, following the ideas of \cite{acosta2023structure}, we define the following family of degenerate and normalized mobilities
\begin{equation}
	\label{def:mobility}
	M(v)\coloneqq h_{p,q}(v),
\end{equation}
for certain $p,q\ge 1$ where
$$
h_{p,q}(v)\coloneqq K_{p,q}v_\oplus^p(1-v)_\oplus^q=
\begin{cases}
	K_{p,q}v^p(1-v)^q, & v\in[0,1],\\
	0, &\text{elsewhere},
\end{cases}
$$
with $K_{p,q}>0$ a constant so that $\max_{v\in\R}h_{p,q}(v)=1$. Although one may consider the tumor mobility as $M_u(u)=h_{p,q}(u)$ with $p, q\ge 1$ and the nutrients mobility as $M_n(n)=h_{p',q'}(n)$ with $p', q'\ge 1$ and all the results below equally hold, for simplicity, we will assume that $M_u=M_n$ and denote the mobility function as $M$.

Moreover, following the previous work \cite{hawkins-daarud_numerical_2012} we take
$$
N(u,n)=\frac{1}{2\delta}n^2-\chi_0un
$$
for certain small parameter $\delta>0$ and $\chi_0\ge 0$. To abbreviate the notation, we define the chemical potential of nutrients as 
\begin{equation}
	\label{def:mu_n}
\mu_n\coloneqq \partial_n N(u,n)=\frac{1}{\delta} n-\chi_0 u.
\end{equation}
As we expect $\delta$ to be small, we assume that the active transport of the nutrients towards the tumor is barely negligible with respect to the diffusion of the nutrients in the medium $\Omega$ (see \cite{acosta2023structure,hawkins-daarud_numerical_2012} for more details).

In addition, we define the proliferation function
\begin{equation}
	\label{def:proliferation}
	P(u,n)\coloneqq h_{r,s}(u)n_\oplus,
\end{equation}
for certain $r,s\in\N$, which depends on both cells and nutrients, and take
\begin{align}
\mathcal{S}&=\delta P_0P(u,n)(\mu_n-\mu_u)_\oplus,\\
\Gamma&=\rho_2\mathcal{S},
\end{align}
which leads to $\Gamma_u/2=\mathcal{S}$.

Following this choice of the mobility and proliferation functions, both variables $u$ and $n$ are restricted to the interval $[0,1]$ as they represent the volume fraction of tumors and nutrients, respectively.

Finally, with a proper choice of the remaining parameters, we can arrive at the following model, which is a extension of the work in \cite{acosta2023structure}:
\begin{subequations}
	\label{problema:modified_model}
	\begin{align}
		\label{eq:modified_model_v}
		\vv&=-K(\nabla p+u\nabla\mu_u+n\nabla\mu_n) \quad&\text{in }\Omega\times (0,T), \\
		\label{eq:modified_model_p}
		\nabla\cdot\vv&=0 \quad&\text{in }\Omega\times (0,T),\\
		\label{eq:modified_model_u}
		\partial_t u+\nabla\cdot(u\vv)&=C_u\nabla\cdot\left(M(u)\nabla\mu_u\right)+\delta P_0P(u,n)(\mu_n-\mu_u)_\oplus \quad&\text{in }\Omega\times (0,T),\\
		\label{eq:modified_model_muu}
		\mu_u&=F'(u)-\varepsilon^2\Delta u-\chi_0 n\quad&\text{in }\Omega\times (0,T),\\
		\label{eq:modified_model_n}
		\partial_t n+\nabla\cdot(n\vv)&=C_n\nabla\cdot\left(M(n)\nabla\mu_n\right)-\delta P_0P(u,n)(\mu_n-\mu_u)_\oplus \quad&\text{in }\Omega\times (0,T),\\
		\label{eq:modified_model_bc}
		\vv\cdot\nn&=\nabla u\cdot \mathbf{n}=\left( M(n)\nabla \mu_n\right)\cdot \mathbf{n}=\left( M(u)\nabla \mu_u\right)\cdot \mathbf{n}=0 \quad &\text{on }\partial\Omega\times (0,T),\\
		u(0)&=u_0,\quad n(0)=n_0\quad&\text{in }\Omega,
	\end{align}
\end{subequations}
where $u,n\in[0,1]$ in $\Omega\times (0,T)$, $u_0,n_0\in L^2(\Omega)$ with $u_0,n_0\in[0,1]$ in $\Omega$, $\mu_n$ is defined in \eqref{def:mu_n}
and all the parameters above are nonnegative with $\delta, C_u, C_n, K>0$ and $\varepsilon, \chi_0, P_0\ge 0$.

Notice that we have imposed the ``slip" boundary condition $\vv\cdot \nn=0$ on $\partial\Omega$ in \eqref{eq:modified_model_bc} which in particular satisfies the compatibility condition \eqref{comp_cond}.

Therefore, since $\Gamma\ge 0$ if $u, n\in[0,1]$, only the tumor cells proliferate by consuming nutrients, we can observe that the tissue will gain mass if the density of the tumor cells is bigger than the density of the healthy cells, $\rho_1< \rho_2$, and will lose mass in the opposite case, $\rho_1>\rho_2$. In the case of matching densities, $\rho_1=\rho_2$, we assume that the tissue will gain no mass during the process.

\subsection{Variational formulation and properties}

To define the weak formulation of \eqref{problema:modified_model}, we introduce the spaces
$$
\Hdiv =\left\{ \vv\in L^2(\Omega)^d:  \ \nabla\cdot \vv\in L^2(\Omega),\ \vv\cdot\nn=0\ \text{on }\partial\Omega \right\}.
$$
$$
L^2_0(\Omega)=\left\{ p\in L^2(\Omega):\ \int_\Omega p =0 \right\}.
$$
Then, the problem is to find $(\vv,p,u,\mu_u,n)$ such that $\vv\in L^2(0,T;\Hdiv)$, 
$p\in L^2(0,T;L^2_0(\Omega))$, 
$u \in L^\infty(0,T;H^1(\Omega))$, $n\in L^2(0,T;H^1(\Omega))$ with $0\le u,n\le 1$ in $\Omega\times(0,T)$ and $\partial_t u,\partial_t n\in L^2(0,T;H^1(\Omega)')$; 
and $\mu_u\in L^2(0,T; H^1(\Omega))$,
which satisfies the following variational problem a.e. $t\in(0,T)$
\begin{subequations}
	\label{problema:mod_model_gamma-0}
	\begin{align}
		\label{eq:mod_model_form_var_v}
		\escalarL{\vv}{\bvv}&=-K\left(-\escalarL{p}{\nabla\cdot \bvv}+\escalarL{u\nabla\mu_u+n\nabla\mu_n}{\bvv}\right) \quad&&\forall\overline{\vv}\in \Hdiv, \\
		\label{eq:mod_model_form_var_p}
		\escalarL{\nabla\cdot\vv}{\bp}&=0 \quad&&\forall\overline{p}\in L^2_0(\Omega),\\
		\label{eq:mod_model_form_var_u}
		\dualH{\partial_t u(t)}{\overline{u}}-\escalarL{u\vv}{\nabla\bu}&=-C_u\escalarLd{M(u(t))\nabla\mu_u(t)}{\nabla\overline{u}}\notag\\&\quad+\delta P_0 \escalarL{P(u(t),n(t))(\mu_n(t)-\mu_u(t))_\oplus }{\overline{u}},&&\forall\overline{u}\in H^1(\Omega),\\
		\label{eq:mod_model_form_var_muu}
		\escalarL{\mu_u(t)}{\overline{\mu}_u}&=\varepsilon^2 \escalarLd{\nabla u(t)}{\nabla\overline{\mu}_u}+\escalarL{F'(u(t))-\chi_0 n(t)}{\overline{\mu}_u},&&\forall\overline{\mu}_u\in H^1(\Omega),\\
		\label{eq:mod_model_form_var_n}
		\dualH{\partial_t n(t)}{\overline{n}}-\escalarL{n\vv}{\nabla\bn}&=-C_n\escalarLd{M(n(t))\nabla\mu_n(t)}{\nabla\overline{n}}\notag\\&\quad-\delta P_0 \escalarL{P(u(t),n(t))(\mu_n(t)-\mu_u(t))_\oplus }{\overline{n}},&&\forall\overline{n}\in H^1(\Omega),
	\end{align}
\end{subequations}
where 
\begin{equation}
	\label{eq:mod_model_form_var_mun}
	\mu_n(t)=\frac{1}{\delta} n(t) -\chi_0 u(t),
\end{equation}
$u(0)=u_0$, $n(0)=n_0$ and $\escalarL{\cdot}{\cdot}$, $\dualH{\cdot}{\cdot}$ denote the usual scalar product in $L^2(\Omega)$ and the dual product over $H^1(\Omega)$ (or $H^1(\Omega)^d$, as there is no ambiguity), respectively. Note that boundary conditions \eqref{eq:modified_model_bc} have been implicitly imposed in this variational formulation, except the slipt condition $\vv\cdot \nn=0$.

For the next results we are going to assume that the solution of \eqref{problema:mod_model_gamma-0} is regular enough so that the expressions that appear below hold.

\begin{proposition}[Mass conservation]
	\label{prop:mass_conservation_continous}
	Let $(\vv,p,u,\mu_u,n)$ be a solution of the problem \eqref{problema:mod_model_gamma-0}. Then, this solution conserves the total mass of tumor cells plus nutrients in the sense of $$\frac{d}{dt}\int_\Omega (u(x,t)+n(x,t))dx=0.$$
\end{proposition}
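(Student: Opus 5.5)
The plan is to test the weak equations for $u$ and $n$ with the constant function $1$ and add the results. The function $\bu=\bn=1$ belongs to $H^1(\Omega)$, so it is an admissible test function in \eqref{eq:mod_model_form_var_u} and \eqref{eq:mod_model_form_var_n}.

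For the $u$-equation, take $\bu=1$ in \eqref{eq:mod_model_form_var_u}. Since $\nabla 1=0$, both the convective term $\escalarL{u\vv}{\nabla\bu}$ and the diffusive term $C_u\escalarLd{M(u)\nabla\mu_u}{\nabla\bu}$ vanish. What remains is
\[
\dualH{\partial_t u}{1}=\delta P_0\escalarL{P(u,n)(\mu_n-\mu_u)_\oplus}{1}.
\]
Taking $\bn=1$ in \eqref{eq:mod_model_form_var_n} in the same way gives
\[
\dualH{\partial_t n}{1}=-\delta P_0\escalarL{P(u,n)(\mu_n-\mu_u)_\oplus}{1}.
\]

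Adding the two identities, the reaction terms cancel exactly. This cancellation is the reason $\Gamma_u/2$ and $\mathcal S$ were identified in the constitutive assumptions. We are left with
\[
\dualH{\partial_t u}{1}+\dualH{\partial_t n}{1}=0.
\]

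Under the standing regularity assumption, $\dualH{\partial_t u}{1}=\frac{d}{dt}\int_\Omega u$, and likewise for $n$. This identification is standard for $u\in L^2(0,T;H^1)$ with $\partial_t u\in L^2(0,T;(H^1)')$, for instance via the Lions–Magenes argument, or directly if $\partial_t u\in L^2$. It yields the claimed identity $\frac{d}{dt}\int_\Omega(u+n)=0$.

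The argument does not use the Darcy equations \eqref{eq:mod_model_form_var_v}--\eqref{eq:mod_model_form_var_p}. It also does not need $\nabla\cdot\vv=0$ or $\vv\cdot\nn=0$, because in the weak form the convective terms are already integrated by parts, and they vanish identically against constant test functions.

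The only delicate point is this last justification. It is covered by the paper's assumption that the solution is regular enough for the expressions below to hold.
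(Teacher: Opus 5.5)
Your proposal is correct and follows the paper's proof: test \eqref{eq:mod_model_form_var_u} and \eqref{eq:mod_model_form_var_n} with $\bu=\bn=1$ and add, so that the transport and diffusion terms vanish and the reaction terms cancel. The paper leaves implicit the identification of $\dualH{\partial_t u}{1}$ with $\frac{d}{dt}\int_\Omega u$ under its regularity assumption, and you make that step explicit.
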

\begin{proof}
	It is enough to take $\overline{u}=\overline{n}=1$ in \eqref{eq:mod_model_form_var_u} and \eqref{eq:mod_model_form_var_n} and add the resulting expressions.
\end{proof}

\begin{proposition}[Pointwise bounds]
	Let $(\vv,p,u,\mu_u,n)$ be a solution of the problem \eqref{problema:mod_model_gamma-0}. Then, this solution satisfies that $u(t), n(t) \in [0,1]$ for a.e. $t\in(0,T)$ provided that $u_0,n_0\in[0,1]$ in $\Omega$.
\end{proposition}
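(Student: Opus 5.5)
We will follow the standard truncation (Stampacchia) argument used in \cite{acosta2023structure}, which combines the degeneracy of the mobility $M$ outside $[0,1]$ with the structure of the proliferation term. For the lower bound of $u$ we test \eqref{eq:mod_model_form_var_u} with $\bu=u_\ominus\coloneqq\min(u,0)\in H^1(\Omega)$. The four terms behave as follows:
\begin{itemize}
\item The time derivative gives $\frac12\frac{d}{dt}\|u_\ominus\|^2_{L^2(\Omega)}$.
\item The diffusion term $C_u\escalarL{M(u)\nabla\mu_u}{\nabla u_\ominus}$ vanishes. Indeed, $\nabla u_\ominus$ is supported in $\{u<0\}$, where $M(u)=0$.
\item The reaction term vanishes as well, since $P(u,n)=h_{r,s}(u)n_\oplus=0$ wherever $u<0$.
\item For the convective term we use $\nabla\cdot\vv=0$ and $\vv\cdot\nn=0$:
$$
\escalarL{u\vv}{\nabla u_\ominus}=\escalarL{u_\ominus\vv}{\nabla u_\ominus}=\frac12\int_\Omega \vv\cdot\nabla (u_\ominus^2)=-\frac12\int_\Omega(\nabla\cdot\vv)\,u_\ominus^2=0.
$$
The first equality holds because $\nabla u_\ominus$ is supported where $u=u_\ominus$.
\end{itemize}
Hence $\frac{d}{dt}\|u_\ominus\|^2_{L^2(\Omega)}=0$. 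Since $u_0\ge0$, we get $u_\ominus(0)=0$, and so $u\ge0$ for all $t$.

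The upper bound is handled the same way with $\bu=(u-1)_\oplus$. The diffusion term again vanishes because $M(u)=0$ for $u>1$, and the reaction term vanishes because $h_{r,s}(u)=0$ for $u>1$. For the convective term we write $u\vv\cdot\nabla(u-1)_\oplus=((u-1)_\oplus+1)\,\vv\cdot\nabla(u-1)_\oplus$. This splits into
$$
\frac12\int_\Omega\vv\cdot\nabla\big((u-1)_\oplus^2\big)+\int_\Omega\vv\cdot\nabla(u-1)_\oplus .
$$
Integrating by parts, both pieces vanish by $\nabla\cdot\vv=0$ and $\vv\cdot\nn=0$. Therefore $\|(u-1)_\oplus\|_{L^2(\Omega)}$ is constant in time and equal to $0$.

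The argument for $n$ is identical, using \eqref{eq:mod_model_form_var_n} with test functions $n_\ominus$ and $(n-1)_\oplus$. The diffusion and convection terms vanish exactly as for $u$. The reaction term now enters with a minus sign, so it needs separate care:
\begin{itemize}
\item On $\{n<0\}$ we have $n_\oplus=0$, so $P=0$ and the term vanishes.
\item On $\{n>1\}$ the term is $-\delta P_0\int P(\mu_n-\mu_u)_\oplus (n-1)_\oplus\le0$, because $P\ge0$. This has the favourable sign.
\end{itemize}
So $\frac{d}{dt}\|(n-1)_\oplus\|^2_{L^2(\Omega)}\le0$, and $n\le1$ follows. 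Note that the bounds on $n$ do not rely on those for $u$.

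The main obstacle is justifying these manipulations at the stated regularity. Two points need care: the chain rule $\dualH{\partial_t u}{u_\ominus}=\frac12\frac{d}{dt}\|u_\ominus\|^2_{L^2(\Omega)}$ in the $H^1$–$(H^1)'$ duality, and the integration by parts in the convective term for $\vv\in\Hdiv$ only. We handle both through the standing assumption that the solution is regular enough. The chain rule follows from a Lions–Magenes type lemma for convex truncations, and the convective integration by parts is done first for smooth $\vv$ and then passed to the limit by density. The degeneracy of $M$ outside $[0,1]$ is what removes the diffusion terms.
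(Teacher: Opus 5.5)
Your proposal is correct and is essentially the paper's argument. You test with the negative part and with the excess over $1$, eliminate the convective term via $\nabla\cdot\vv=0$ and $\vv\cdot\nn=0$, and remove the diffusion and reaction terms using the degeneracy of $M$ and the structure of $P$; the paper leaves this last step implicit, and it writes out the case of $n$ and calls $u$ analogous, the reverse of your choice. The only thing to watch is notation: the paper uses $x_\ominus=\max(-x,0)\ge 0$ (so that $x=x_\oplus-x_\ominus$), whereas you set $u_\ominus\coloneqq\min(u,0)$, which is self-consistent in your write-up but flips signs relative to the paper's formulas.
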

\begin{proof}
	Let us show that $n(t)\in[0,1]$ for a.e. $t\in(0,T)$ provided that $n_0\in[0,1]$ in $\Omega$. The proof for $u(t)$ is analogous.

	First, to show $n(t)\ge 0$, notice that $n_\ominus\in L^2(0,T, H^1(\Omega))$ and take $\overline{n}=n(t)_\ominus$ in \eqref{eq:mod_model_form_var_n}. We arrive at
	$$
	\int_\Omega \partial_t n(x,t)n(x,t)_\ominus dx + \int_\Omega n(x,t)\vv(x,t)\cdot\nabla n(x,t)_\ominus dx = 0.
	$$
	Now, using \eqref{eq:mod_model_form_var_p},
	\begin{align*}
	\int_\Omega n(x,t)\vv(x,t)\cdot\nabla n(x,t)_\ominus =& -\int_\Omega n(x,t)_\ominus\vv(x,t)\cdot\nabla n(x,t)_\ominus = -\frac{1}{2}\int_\Omega \vv(x,t)\cdot\nabla (n(x,t)_\ominus)^2=0.
	\end{align*}

	Therefore,
	$$
	0 = \int_\Omega \partial_t n(x,t)n(x,t)_\ominus dx = -\frac{1}{2}\frac{d}{dt}\int_\Omega (n(x,t)_\ominus)^2dx,
	$$
	which implies $\normaL{n(t)_\ominus}=\normaL{n(0)_\ominus}=0$. Hence, $n(t)\ge 0$ for a.e. $t\in(0,T)$.

	Now, to show that $n(t)\le 1$, take $\overline{n}=(1-n(t))_\ominus$ in \eqref{eq:mod_model_form_var_n}. We arrive at
	$$
	\int_\Omega \partial_t n(x,t)(1-n(x,t))_\ominus dx + \int_\Omega n(x,t)\vv(x,t)\cdot\nabla (1-n(x,t))_\ominus dx \le 0,
	$$
	where, using again \eqref{eq:mod_model_form_var_p},
	\begin{align*}
	\int_\Omega n(x,t)\vv(x,t)\cdot\nabla (1-n(x,t))_\ominus dx =& -\int_\Omega(1-n(x,t))_\ominus\nabla n(x,t)\cdot\vv(x,t)dx
	\\=& - \frac{1}{2}\int_\Omega \vv(x,t)\cdot\nabla ((1-n(x,t))_\ominus)^2dx
	=0.
	\end{align*}

	Thus,
	$$
	0 \ge \int_\Omega \partial_t n(x,t)(1-n(x,t))_\ominus dx = -\int_\Omega \partial_t (1-n(x,t))(1-n(x,t))_\ominus dx = \frac{1}{2}\frac{d}{dt}\int_\Omega ((1-n(x,t))_\ominus)^2dx,
	$$
	which implies $\normaL{(1-n(t))_\ominus}\le\normaL{(1-n(0))_\ominus}=0$. Hence, $n(t)\le 1$ for a.e. $t\in(0,T)$.
\end{proof}
	
\begin{proposition}[Energy law]
	\label{prop:energy_law_continuous}
	Let $(\vv,p,u,\mu_u,n)$ be a solution of the problem \eqref{problema:mod_model_gamma-0}.
	Then, it satisfies the following energy law
	\begin{align}
		\label{ley_energia_continua}
		\frac{d E(u(t),n(t))}{dt}&
		+C_u\int_\Omega M(u(x,t))|\nabla\mu_u(x,t)|^2dx
		+C_n\int_\Omega M(n(x,t))|\nabla\mu_n(x,t)|^2 dx \notag
		\\&
		+\delta P_0\int_\Omega P(u(x,t),n(x,t))(\mu_u(x,t)-\mu_n(x,t))_\oplus ^2dx+\frac{1}{K}\int_\Omega\vert\vv(x,t)\vert^2dx
		\notag\\&=0
	\end{align}
	where the energy functional is defined by 
	\begin{align}
		\label{energia}
		E(u,n)&\coloneqq\int_\Omega\left(\frac{\varepsilon^2}{2}|\nabla u|^2+ F(u)-\chi_0 u\, n 
		+\frac{1}{2\delta}n^2\right).
	\end{align}
	Therefore, the solution is energy stable in the sense $$\frac{d}{dt}E(u(t),n(t))\le 0.$$
\end{proposition}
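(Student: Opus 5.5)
We test the weak formulation \eqref{problema:mod_model_gamma-0} with the natural energy multipliers and add the results. Concretely, we take $\bu=\mu_u$ in \eqref{eq:mod_model_form_var_u} and $\bn=\mu_n$ in \eqref{eq:mod_model_form_var_n}. For the time derivatives we use $\bmu_u=\partial_t u$ in \eqref{eq:mod_model_form_var_muu}, and we differentiate \eqref{eq:mod_model_form_var_mun} in time, tested against $n$ and $u$. Finally we take $\bvv=\vv$ in \eqref{eq:mod_model_form_var_v} and $\bp=p$ in \eqref{eq:mod_model_form_var_p}. Regularity is assumed sufficient for all of this, as stated before the propositions.

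First, the chain rule gives
\[
\frac{d}{dt}E(u,n)=\dualH{\partial_t u}{F'(u)-\chi_0 n-\varepsilon^2\Delta u}+\dualH{\partial_t n}{\tfrac1\delta n-\chi_0 u}.
\]
By \eqref{eq:mod_model_form_var_muu} and \eqref{eq:mod_model_form_var_mun} this equals $\dualH{\partial_t u}{\mu_u}+\dualH{\partial_t n}{\mu_n}$. The cross term $-\chi_0\frac{d}{dt}\int un$ splits exactly into the $-\chi_0 n\,\partial_t u$ and $-\chi_0 u\,\partial_t n$ contributions, which is why $\mu_u$ carries $-\chi_0 n$ and $\mu_n$ carries $-\chi_0 u$.

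Next, substituting $\bu=\mu_u$ and $\bn=\mu_n$ and adding yields
\[
\frac{dE}{dt}-\escalarL{u\vv}{\nabla\mu_u}-\escalarL{n\vv}{\nabla\mu_n}=-C_u\!\int M(u)|\nabla\mu_u|^2-C_n\!\int M(n)|\nabla\mu_n|^2+\delta P_0\!\int P(u,n)(\mu_n-\mu_u)_\oplus(\mu_u-\mu_n).
\]
For the reaction term we use $P\ge0$ and $x_\oplus(-x)=-x_\oplus^2$ with $x=\mu_n-\mu_u$. This gives $-\delta P_0\int P(u,n)(\mu_n-\mu_u)_\oplus^2$, the dissipation term of the statement; note that it involves $(\mu_n-\mu_u)_\oplus^2$, so the sign convention written in \eqref{ley_energia_continua} should read accordingly.

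The main step is the Korteweg/convective coupling. Testing \eqref{eq:mod_model_form_var_v} with $\bvv=\vv\in\Hdiv$ gives
\[
\tfrac1K\|\vv\|^2=\escalarL{p}{\nabla\cdot\vv}-\escalarL{u\nabla\mu_u+n\nabla\mu_n}{\vv}.
\]
To eliminate the pressure term we need $\escalarL{p}{\nabla\cdot\vv}=0$. Taking $\bp=p\in L^2_0(\Omega)$ in \eqref{eq:mod_model_form_var_p} does this. It is legitimate to test only against mean-zero functions here, because $\int_\Omega\nabla\cdot\vv=\int_{\partial\Omega}\vv\cdot\nn=0$, so the equation \eqref{eq:mod_model_form_var_p} holds for constants too.

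It follows that $\escalarL{u\vv}{\nabla\mu_u}+\escalarL{n\vv}{\nabla\mu_n}=-\frac1K\|\vv\|^2$. Inserting this into the previous identity and moving all terms to the left gives \eqref{ley_energia_continua}. Since every dissipation term is nonnegative ($M\ge0$, $P\ge0$, $K>0$), we conclude $\frac{d}{dt}E\le0$.
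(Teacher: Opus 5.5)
Your proof is correct and is essentially the paper's argument. You test with $\bvv=\vv$ (the paper takes $\frac{1}{K}\vv$, which is the same after dividing by $K$), $\bp=p$, $\bu=\mu_u$, $\bmu_u=\partial_t u$ and $\bn=\mu_n$, pair the definition \eqref{eq:mod_model_form_var_mun} with $\partial_t n$, and add. Pairing with $\partial_t n$ is what your computation actually uses, not your stated plan to \emph{differentiate in time and test against $n$ and $u$}. You are also right that the formulation produces the dissipation $\delta P_0\int_\Omega P(u,n)(\mu_n-\mu_u)_\oplus^2$. So the $(\mu_u-\mu_n)_\oplus$ in the statement, and in the paper's proof, is a sign typo, and your version agrees with the discrete law \eqref{ley_energia_discreta}.
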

\begin{proof}
	Take $\bvv=\frac{1}{K}\vv(t)$, $\bp=p(t)$, $\bu=\mu_u(t)$, $\bmu_u=\partial_t u(t)$, $\bn=\mu_n(t)$ in \eqref{eq:mod_model_form_var_v}--\eqref{eq:mod_model_form_var_n} and test \eqref{eq:mod_model_form_var_mun} by $\partial_t n(t)$.
	Adding the resulting expressions we arrive at
	\begin{align*}
		0=&\varepsilon^2\escalarLd{\nabla u(t)}{\nabla(\partial_t u(t))} + \escalarL{F'(u(t))}{\partial_t u(t)} -\chi_0\left[\escalarL{n(t)}{\partial_tu(t)} + \escalarL{u(t)}{\partial_t n(t)}\right]+\frac{1}{\delta}\escalarL{n(t)}{\partial_t n(t)}\\&+C_u\int_\Omega M(u(x,t))\vert\nabla\mu_u(x,t)\vert^2dx+C_n\int_\Omega M(n(x,t))\vert\nabla\mu_n(x,t)\vert^2dx\\& +\delta P_0\int_\Omega P(u(x,t),n(x,t))(\mu_u(x,t)-\mu_n(x,t))_\oplus (\mu_u(x,t)-\mu_n(x,t))dx+\frac{1}{K}\int_\Omega\vert\vv(x,t)\vert^2 dx.
	\end{align*}
	Therefore, it is straightforward to check that \eqref{ley_energia_continua} holds.
\end{proof}

\section{Numerical approximation}
\label{sec:numerical_approximation}

We discretize time by partitioning $[0,T]$ into equally-spaced intervals $0=t_0<t_1<\cdots<t_N=T$ with step size $\Delta t=t_{m+1}-t_m$. For a scalar function $w=w(t)$ on $[0,T]$, we denote $w^m\simeq w(t_m)$ and use $\delta_t w^{m+1}=(w^{m+1}-w^m)/\Delta t$ as the discrete time derivative. To handle the double well potential $F(u)$ efficiently, we employ a convex splitting (see, for instance, \cite{eyre_1998_unconditionally, guillen-gonzalez_linear_2013,acosta-soba_CH_2022}, for more details) that decomposes it as $F(u)=F_i(u)+F_e(u)$, where $F_i(u)=3 u^2/8$ (treated implicitly) and $F_e(u)=\frac{1}{4}u^4-\frac{1}{2}u^3-\frac{1}{8}u^2$ (treated explicitly). This decomposition yields
\begin{equation}
	\label{convex-split}
f(u^{m+1},u^m)=F_i'(u^{m+1})+F_e'(u^m)=\frac{1}{4}\left(3 u^{m+1}+4 (u^m)^3-6(u^m)^2-u^m\right).
\end{equation}

For the spatial discretization, we employ a shape-regular triangular mesh $\T_h=\{K\}_{K\in \T_h}$ of size $h>0$ over $\Omega$. The set of all mesh edges is partitioned into interior edges $\E_h^{\text{i}}$ and boundary edges $\E_h^{\text{b}}$. Each interior edge $e\in\E_h^{\text{i}}$ shared by elements $K$ and $L$ is equipped with a unit normal vector $\nn_e$ pointing outward from $K$ into $L$, while the unit normal vector on the boundary edges is oriented outward from $\Omega$. We impose the following condition on the mesh:
\begin{hyp}
	\label{hyp:mesh}
	The line between the baricenters of any adjacent triangles $K$ and $L$ is orthogonal to the interface $e=K\cap L\in\E_h^{\text{i}}$.
\end{hyp}
Some examples of meshes satisfying this Hypothesis~\ref{hyp:mesh} can be found in \cite{acosta-soba_KS_2022}.

On each edge, we define the \textit{average} $\media{\cdot}$ and the \textit{jump} $\salto{\cdot}$ of a scalar function $w$ on an edge $e\in\E_h$ in the standard manner:
\begin{equation*}
		\media{w}\coloneqq
		\begin{cases}
			\dfrac{\wK+\wL}{2}&\text{if } e\in\E_h^{\text{i}}\\
			\wK&\text{if }e\in\E_h^{\text{b}}
		\end{cases},
		\qquad
		\salto{w}\coloneqq
		\begin{cases}
			\wK-\wL&\text{if } e\in\E_h^{\text{i}}\\
			\wK&\text{if }e\in\E_h^{\text{b}}
		\end{cases}.
\end{equation*}

For the finite element spaces, we use $\Pd_k(\T_h)$ and $\Pc_k(\T_h)$ to denote spaces of degree-$k$ piecewise-polynomial functions that are discontinuous and continuous, respectively, over the edges of the mesh, $\E_h$ (see \cite{di_pietro_ern_2012} for more details on discontinuous Galerkin methods). We further define two operators: a $L^2$-projection $\Pi_0\colon L^1(\Omega) \rightarrow \Pd_0(\T_h)$ and a mass-lumped regularization $\Pi^h_1\colon L^1(\Omega)\rightarrow \Pc_1(\T_h)$ of a function $g\in L^1(\Omega)$ as the function satisfying
\begin{align}
	\label{eq:esquema_DG_Pi0}
	\escalarL{g}{\overline{w}}&=
	\escalarL{\Pi_0 g}{\overline{w}},&\forall\,\overline{w}\in \Pd_0(\T_h),
	\\
	\label{eq:esquema_DG_Pih1}
\escalarL{g}{\overline{\phi}}&=\escalarML{\Pi^h_1 g}{\overline{\phi}},&\forall\,\overline{\phi}\in \Pc_1(\T_h),
\end{align}
where $\escalarML{\cdot}{\cdot}$ is the mass-lumping scalar product in $\Pc_1(\T_h)$. 
In fact, $(\Pi_0 g)|_K =( \int_K g)/|K|$  for all $K\in \T_h $, 
and 
$(\Pi^h_1 g)(a_j)=(\sum_{K\in \text{Sop} (a_j)} \int_K g\ \varphi_j) /(\sum_{K\in \text{Sop} (a_j)}|K|/(d+1) )$
 for all vertex $a_j$ with $\varphi_j $ the canonical basis of $\Pc_1(\T_h)$.

We propose the following fully discrete scheme for the model \eqref{problema:modified_model}: given $u^m,n^m\in\Pd_0(\T_h)$, 
 find $\vv^{m+1}\in\Vh$ with $\vv^{m+1}\cdot \nn=0$ on $\partial\Omega$, $p^{m+1}\in\Ph$ with $\int_\Omega p^{m+1}=0$, $u^{m+1}, n^{m+1}\in \Pd_0(\T_h)$ and $\mu_u^{m+1} \in \Pc_1(\T_h)$, such that
\begin{subequations}
	\label{esquema_DG_mod_model}
	\begin{align}
		\label{eq:esquema_DG_mod_model_v}
		\frac{1}{K}\escalarL{\vv^{m+1}}{\bvv}
		&-\escalarL{p^{m+1}}{\nabla \cdot \bvv}
		+\ch{u^{m+1}}{\Pi_0\mu_u^{m+1}}{\bvv}+\ch{n^{m+1}}{\mu_n^{m+1}}{\bvv}
		\nonumber\\
		&+\sigma_u(h)\, \shd{\vv^{m+1}}{u^{m+1}}{\Pi_0\mu_u^{m+1}}{\bvv}
		+\sigma_n(h)\, \shd{\vv^{m+1}}{n^{m+1}}{\mu_n^{m+1}}{\bvv}=0,
		\\
		\label{eq:esquema_DG_mod_model_p}
		\escalarL{\nabla\cdot \vv^{m+1}}{\bp}&=0,\\
		\label{eq:esquema_DG_mod_model_u}
		\escalarL{\delta_tu^{m+1}}{\overline{u}}&
		=-\aupw{\vv^{m+1}}{u^{m+1}}{\bu}-C_u\bupw{\Pi_0\mu_u^{m+1}}{M(u^{m+1})}{\overline{u}}&\notag\\&\quad+\delta P_0\escalarL{P(u^{m+1},n^{m+1})(\mu_n^{m+1}-\Pi_0\mu_u^{m+1})_\oplus }{\overline{u}}\\
		\label{eq:esquema_DG_mod_model_muu}
		\escalarML{\mu_u^{m+1}}{\overline{\mu}_u}&=\varepsilon^2 \escalarLd{\nabla \up^{m+1}}{\nabla\overline{\mu}_u}+ \escalarL{f(\up^{m+1},\up^m)}{\overline{\mu}_u}-\chi_0\escalarL{ n^{m+1}}{\overline{\mu}_u},\\
		\label{eq:esquema_DG_mod_model_n}
		\escalarL{\delta_t n^{m+1}}{\overline{n}}&=-\aupw{\vv^{m+1}}{n^{m+1}}{\bn}-C_n\bupw{\mu^{m+1}_n}{M(n^{m+1})}{\overline{n}}&\notag\\&\quad-\delta P_0 \escalarL{P(u^{m+1},n^{m+1})(\mu_n^{m+1}-\Pi_0\mu_u^{m+1})_\oplus }{\overline{n}},
	\end{align}
\end{subequations}
for every $\bvv\in\Vh$ with $\bvv\cdot \nn=0$, $\bp\in\Ph$ with $\int_\Omega \bp =0$, $\bu\in\Pd_0(\T_h)$, 
$\overline{\mu}_u\in\Pc_1(\T_h)$ and $\bn\in\Pd_0(\T_h)$, where 
\begin{equation}
	\label{eq:esquema_DG_mod_model_mun}
	\mu_n^{m+1}=\frac{1}{\delta}n^{m+1}  -\chi_0 \Pi_0(\up^{m}),
\end{equation}
and the spaces $\Vh$ and $\Ph$ and the remaining discrete terms will be defined in what follows.

\begin{lemma}[\cite{boffi2013mixed,ern_theory_2010}]
	There is a constant $\beta>0$ such that, for every $p\in L^2_0(\Omega)$, it holds that
	\begin{equation}
		\label{eq:inf-sup_continuous}
		\beta \|p\|_{L^2(\Omega)} \le \sup_{\vv\in \Hdiv\setminus\{0\}} \frac{(p,\nabla\cdot \vv)}{\|\vv\|_{\Hdiv}}.
	\end{equation}
\end{lemma}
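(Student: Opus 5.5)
The plan is to construct, for a given $p\in L^2_0(\Omega)$, an explicit velocity field $\vv_p\in\Hdiv$ with $\nabla\cdot\vv_p = p$ and $\|\vv_p\|_{\Hdiv}\le C\|p\|_{L^2(\Omega)}$. Once this is done, testing the supremum with $\vv=\vv_p$ gives
\[
\sup_{\vv\in \Hdiv\setminus\{0\}} \frac{(p,\nabla\cdot \vv)}{\|\vv\|_{\Hdiv}}\ \ge\ \frac{(p,p)}{\|\vv_p\|_{\Hdiv}}\ \ge\ \frac{1}{C}\|p\|_{L^2(\Omega)},
\]
so the lemma holds with $\beta=1/C$. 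The case $p=0$ is trivial.

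To build $\vv_p$, I would use an auxiliary Neumann problem. Assume $\Omega$ is a bounded Lipschitz (polygonal) domain, as is implicit in the mesh setting. Find $\phi\in H^1(\Omega)$ with $\int_\Omega\phi=0$ such that
\[
(\nabla\phi,\nabla\psi)=(p,\psi)\quad\forall\psi\in H^1(\Omega).
\]
This problem is solvable because the compatibility condition $\int_\Omega p=0$ holds, which is exactly where $p\in L^2_0(\Omega)$ is used. Lax--Milgram on $H^1(\Omega)\cap L^2_0(\Omega)$, together with the Poincaré--Wirtinger inequality, gives existence, uniqueness and the bound $\|\nabla\phi\|_{L^2(\Omega)}\le C_P\|p\|_{L^2(\Omega)}$.

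Next I set $\vv_p=-\nabla\phi$. The weak formulation says $\nabla\cdot\vv_p=p$ in $L^2(\Omega)$, by testing with $\psi\in C_c^\infty(\Omega)$. Testing then with arbitrary $\psi\in H^1(\Omega)$ and using the generalized Green formula in $H(\mathrm{div},\Omega)$ yields $\langle \vv_p\cdot\nn,\psi\rangle_{\partial\Omega}=0$ for all traces $\psi$. Since traces of $H^1(\Omega)$ exhaust $H^{1/2}(\partial\Omega)$, this gives $\vv_p\cdot\nn=0$ in $H^{-1/2}(\partial\Omega)$, so $\vv_p\in\Hdiv$. Its norm satisfies
\[
\|\vv_p\|^2_{\Hdiv}=\|\nabla\phi\|^2_{L^2(\Omega)}+\|p\|^2_{L^2(\Omega)}\le (C_P^2+1)\|p\|^2_{L^2(\Omega)},
\]
and we may take $\beta=(1+C_P^2)^{-1/2}$.

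The only delicate step is the precise meaning of the boundary condition $\vv\cdot\nn=0$ for a field that is merely in $H(\mathrm{div})$. This has to be read as the normal trace in $H^{-1/2}(\partial\Omega)$, and the argument relies on the Green formula for $H(\mathrm{div},\Omega)$. This is standard (see \cite{boffi2013mixed,ern_theory_2010}), and no elliptic regularity beyond $H^1$ is needed. That is the advantage of working in $\Hdiv$ rather than $H^1_0(\Omega)^d$: the latter would require the much harder Bogovskii-type construction.
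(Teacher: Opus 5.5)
Your proof is correct. Lifting $p$ through the Neumann problem and taking $\vv=-\nabla\phi$ is the standard argument behind the references the paper cites; the paper states this lemma without proof. Your handling of the compatibility condition, the weak normal trace, and the constant $\beta=(1+C_P^2)^{-1/2}$ is sound. The one hypothesis you should state explicitly is that $\Omega$ is connected. Your use of Poincar\'e--Wirtinger requires it, and the lemma genuinely needs it: on a disconnected $\Omega$, a piecewise constant $p\in L^2_0(\Omega)$ is orthogonal to $\nabla\cdot\Hdiv$.
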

We take $(\Vh,\Ph)$ a compatible ``Darcy inf-sup" pair of finite-dimensional spaces in the sense that there is $\beta>0$ such that for every $p\in\Ph$ it holds
\begin{equation}
	\label{eq:inf-sup_discrete}
	\beta \|p\|_{L^2(\Omega)} \le \sup_{\vv\in \Vh\setminus\{0\}} \frac{(p,\nabla\cdot \vv)}{\|\vv\|_{\Hdiv}},
\end{equation}
with $\Vh\subset \Hdiv$, $\Ph\subset L^2(\Omega)$ and $\div\Vh\subset\Ph$. Moreover, we assume that $(\Vh,\Ph)$ satisfy that the normal components of the velocity are continuous on $\Ehi$ and $\Pd_0(\T_h) \subset \Ph $. For instance, one can choose the following:
\begin{itemize}
	\item $\Vh=RT_k(\T_h)$ and $\Ph=\Pd_k(\T_h)$, for $k\ge 0$,
	where $RT_k(\T_h)$ denotes the Raviart-Thomas space of order $k$ (see \cite{boffi2013mixed,raviart1977mixed,brezzi2004piecewise} for more details).
	\item $\Vh=BDM_k(\T_h)$ and $\Ph=\Pd_{k-1}(\T_h)$, for $k\ge 1$, where $BDM_k(\T_h)$ denotes the Brezzi-Douglas-Marini space of order $k$ (see \cite{boffi2013mixed,brezzi1985two,brezzi2004piecewise} for more details).
\end{itemize}

In fact, either of these choices guarantee the local incompressibility of $\vv^{m+1}$ in the following sense:
\begin{equation}
	\label{local_incompressibility}
	\sum_{e\in\Ehi}\int_{e} (\vv^{m+1}\cdot\nn_e) \salto{\bp}=0,\quad\forall \,\bp\in\Pd_0(\T_h),
\end{equation}
which can be derived integrating by parts in \eqref{eq:esquema_DG_mod_model_p}.
This constraint will allow us to preserve the point-wise bounds of $u^{m+1}$ and $n^{m+1}$, see Theorem \ref{thm:discrete_maximum_principle} below.

Notice that, for any choice of this pair $(\Vh,\Ph)$, the error bounds are expected to be determined by the lowest accuracy approximation
of the phase-field and nutrient variables by $\Pd_0(\T_h)$.

Moreover,
$\ch{w}{\mu}{\bvv}$ is a centered discretization of the term $\escalarL{w\nabla\mu}{\bvv}=-\escalarL{\mu}{\nabla\cdot(w\bvv)}$ in \eqref{eq:mod_model_form_var_v}
defined as
\begin{equation}
	\label{centered_discretization}
	\ch{w}{\mu}{\bvv}\coloneqq 
	- \int_\Omega\nabla\cdot(w\bvv) \mu
	-\sum_{e\in\Ehi}\int_e (\bvv\cdot\nn_e)\media{w}\salto{\mu},
\end{equation}
where the second term is a consistent stabilization term depending on the jumps of $\mu$ on the interior edges of the mesh $\T_h$.

In \eqref{esquema_DG_mod_model} we have considered two different upwind formulas, the classical upwind 
\begin{align}
	\label{def:aupw}
	\aupw{\vv}{\phi}{\bphi}&\coloneqq \sum_{e\in\Ehi, e=K\cap L}\int_e\left( (\vv\cdot\nn_e)_\oplus\phi_K - (\vv\cdot\nn_e)_\ominus\phi_L\right)\salto{\bphi}
\end{align}
whose properties were discussed in \cite{acosta-soba_CH_2022}, and 
$$\bupw{\nabla_{\nn}^0\mu }{M(\phi)}{\bphi},$$
 which follows the ideas introduced in \cite{acosta-soba_KS_2022,acosta2023structure}, and which will be detailed in the Subsection \ref{sec:def_bupw}. 

Finally, we have introduced in \eqref{eq:esquema_DG_mod_model_v} two consistent stabilization terms by means of the expression
\begin{equation}
\label{def:sh2}
	\shd{\uu}{\phi}{\mu}{\buu}\coloneqq
	\begin{cases}
		-\frac{1}{2}\sum_{e\in\Ehi}\int_e(\buu\cdot\nn_e)\, \sign{\uu\cdot\nn_e}\salto{\phi}\salto{\mu} &\text{if } 	\eta=0,\\
		-\frac{1}{2}\sum_{e\in\Ehi}\int_e(\buu\cdot\nn_e)\, \frac{\uu\cdot\nn_e}{|\uu\cdot\nn_e|+\eta}\salto{\phi}\salto{\mu} &\text{if } \eta>0,
	\end{cases}
\end{equation}
and the positive functions  $\sigma_u(h), \sigma_n(h)$ that we will choose in order to control the influence of the upwind terms $\aupw{\vv^{m+1}}{u^{m+1}}{\bu}$ and $\aupw{\vv^{m+1}}{n^{m+1}}{\bn}$ in \eqref{eq:esquema_DG_mod_model_u} and \eqref{eq:esquema_DG_mod_model_n}, respectively. This stabilization together with the approximations
 $\ch{u^{m+1}}{\Pi_0\mu_u^{m+1}}{\bu}$ and $\ch{n^{m+1}}{\mu_n^{m+1}}{\bn}$
 of the extra forces in the momentum equation \eqref{eq:esquema_DG_mod_model_v}, cancel the effect of the transport of the phase-field and nutrient functions by the mean velocity $\vv^{m+1}$ and allow us to obtain a discrete energy inequality, see Lemma \ref{lemma:discrete_energy} below.

To start the algorithm we take $u^0=\Pi_0u_0$, $n^0=\Pi_0n_0$ where $u_0, n_0$ are the continuous initial data, which  satisfy $u_0,n_0\in[0,1]$. Notice that, one also has $u^0,n^0\in[0,1]$.

Note that,  $\mu_n^{m+1}\in \Pd_0(\T_h)$, 
\begin{equation} \label{Pi_1}
(\Pi^h_1 u^m)(a_j)=\frac{\sum_{L\in \text{Sop} (a_j)} |L| u^m_L} {\sum_{L\in \text{Sop} (a_j)}|L| },
\quad
\forall\, a_j,
\end{equation}
and
\begin{equation} \label{Pi_0Pi_1}
\Pi_0(\up^{m})|_K=\frac1{d+1}\frac1K \sum_{a_j\in K}(\Pi^h_1 u^m)(a_j) ,
\quad
 \forall\, K\in \T_h.
\end{equation} 

Notice that we have introduced the regularization of $u^{m+1}$, $\up^{m+1}$ to preserve the diffusion term in \eqref{eq:esquema_DG_mod_model_u}.
In fact, this regularized variable will be regarded as our approximation of the tumor cells volume fraction as, according to the results in Subsection \ref{sec:properties_DG_scheme}, it preserves the pointwise bounds and satisfies a discrete energy law. Moreover, in order to preserve the pointwise bounds and the dissipation of the energy, we consider mass lumping in the term $\escalarML{\mu_u^{m+1}}{\overline{\mu}}$.

\begin{remark}
	The homogeneous Neumann boundary conditions on the fluxes of $u^m$ and $n^m$ have been implicitly imposed in the definition of $\bupw{\cdot}{\cdot}{\cdot}$, see \eqref{def:bupw}. In addition, the boundary condition $\nabla \up^{m}\cdot\nn=0$ on $\partial\Omega\times(0,T)$ is imposed implicitly by the term $\escalarL{\nabla \up^{m}}{\nabla\overline{\mu}_u}$ in \eqref{eq:esquema_DG_mod_model_u}.
	Finally, the boundary condition $\vv^{m+1}\cdot\nn=0$ on $\partial\Omega\times(0,T)$ is strongly imposed in \eqref{esquema_DG_mod_model}.
\end{remark}

\begin{remark}
 In practice, the $0$-mean value constraint on the pressure of the discrete formulation \eqref{esquema_DG_mod_model} will be removed, solving the singular system and the constraint is imposed by post-processing.
\end{remark}

\begin{remark}
	The scheme \eqref{esquema_DG_mod_model} is nonlinear so we will have to use an iterative procedure, such as Newton's method, to approach its solution.
\end{remark}

\subsection{Definition of $\bupw{\cdot}{\cdot}{\cdot}$}
\label{sec:def_bupw}

First of all, following the ideas in \cite{acosta-soba_CH_2022,mazen_saad_2014}, in order to preserve the pointwise bounds using an upwind approximation of the convective term
$$
-\escalarL{M(w)\nabla\mu}{\nabla\bw},\quad \bw\in H^1(\Omega),
$$
we split the mobility function into its increasing and its decreasing part as follows:
$$
M^\uparrow(w)=
\begin{cases}
	M(w),& w\le w^*,\\
	M(w^*),& w>w^*,
\end{cases}
\quad
M^\downarrow(w)=
\begin{cases}
	0,& w\le w^*,\\
	M(w)-M(w^*),& w>w^*,
\end{cases}
$$
where  $w^*\in\R$ is the point where the maximum of $M(w)$ is attained, which can be obtained
by simple algebraic computations. Note that 
 $M(w)=M^\uparrow(w)+M^\downarrow(w)$.

Now, we define the following upwind form for $w,\bw,\mu\in\P_0(\T_h)$:
\begin{multline}
	\label{def:bupw}
	\bupw{\mu}{M(w)}{\bw}\coloneqq\\ \sum_{e\in\E_h^i,e=K\cap L}\int_e\left(\left(-\nabland\mu\right)_\oplus \left(M^\uparrow(\wK) + M^\downarrow(\wL)\right)_\oplus-(-\nabland\mu)_\ominus \left(M^\uparrow(\wL) + M^\downarrow(\wK)\right)_\oplus\right)\salto{\bw}
\end{multline}
with
\begin{equation}
\label{eq:approx_gradn}
\nabland\mu =\frac{-\salto{\mu}}{\mathcal{D}_e(\T_h)}= \frac{\muL-\muK}{\mathcal{D}_e(\T_h)},
\end{equation}
a reconstruction of the normal gradient using $\P_0(\T_h)$ functions for every $e\in\E_h^i$ with $e=K\cap L$ (see \cite{acosta-soba_KS_2022} for more details). We have denoted $\mathcal{D}_e(\T_h)$ the distance between the barycenters of the triangles $K$ and $L$ of the mesh $\T_h$ that share $e\in\Ehi$.
This way, we can rewrite \eqref{def:bupw} as
\begin{multline}
\label{def:aupw_barycenter}
\bupw{\mu}{M(w)}{\bw}\coloneqq \\\sum_{e\in\E_h^i,e=K\cap L}\frac{1}{\mathcal{D}_e(\T_h)}\int_e\left(\salto{\mu}_\oplus \left(M^\uparrow(\wK) + M^\downarrow(\wL)\right)_\oplus-\salto{\mu}_\ominus \left(M^\uparrow(\wL) + M^\downarrow(\wK)\right)_\oplus\right)\salto{\bw}.
\end{multline}
Notice that, as in \cite{acosta2023structure}, we have also truncated the mobility $M(w)$
to avoid negative approximations of $M(w)$ that may lead to a loss of energy stability.

\subsection{Properties of the fully discrete scheme}
\label{sec:properties_DG_scheme}

\begin{proposition}[Mass conservation]
	The scheme \eqref{esquema_DG_mod_model} conserves the total mass of cells plus nutrients in the following sense: for all $m\ge 0$,
	\begin{align*}
	\int_\Omega (u^{m+1}+n^{m+1})=\int_\Omega (u^m+n^m)\quad\text{and}\quad\int_\Omega (\up^{m+1}+n^{m+1})=\int_\Omega (\up^m+n^m).
	\end{align*}
\end{proposition}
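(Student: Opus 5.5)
The first identity will follow by testing the $u$ and $n$ equations with constants, mirroring Proposition~\ref{prop:mass_conservation_continous}. Take $\bu=1$ in \eqref{eq:esquema_DG_mod_model_u} and $\bn=1$ in \eqref{eq:esquema_DG_mod_model_n}; both are admissible since $1\in\Pd_0(\T_h)$. Every interior-edge term in $\aupw{\vv^{m+1}}{\cdot}{1}$ and $\bupw{\cdot}{\cdot}{1}$ carries the factor $\salto{1}=0$, so these forms vanish identically. By definitions \eqref{def:aupw} and \eqref{def:aupw_barycenter} the forms are sums over $\Ehi$ only, so no boundary edges appear. What remains is
\[
\int_\Omega \delta_t u^{m+1}=\delta P_0\int_\Omega P(u^{m+1},n^{m+1})(\mu_n^{m+1}-\Pi_0\mu_u^{m+1})_\oplus=-\int_\Omega \delta_t n^{m+1}.
\]
Adding the two relations and multiplying by $\Delta t$ gives $\int_\Omega(u^{m+1}+n^{m+1})=\int_\Omega(u^m+n^m)$.

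For the second identity, the key fact is that the mass-lumped regularization preserves the integral: $\int_\Omega \Pi^h_1 g=\int_\Omega g$ for every $g\in L^1(\Omega)$. To see this, take $\overline{\phi}=1\in\Pc_1(\T_h)$ in \eqref{eq:esquema_DG_Pih1}, which gives $\escalarL{g}{1}=\escalarML{\Pi^h_1 g}{1}$. It remains to check that $\escalarML{\phi}{1}=\int_\Omega\phi$ for $\phi\in\Pc_1(\T_h)$. This holds because vertex quadrature with weights $|K|/(d+1)$ is exact for affine functions on each simplex; equivalently, $\sum_j\varphi_j=1$ and $\escalarML{\varphi_i}{\varphi_j}=\delta_{ij}\int_\Omega\varphi_j$. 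One can also verify it directly from the explicit formula \eqref{Pi_1}.

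Applying this with $g=u^{m+1}$ and $g=u^m$ gives $\int_\Omega \up^{m+1}=\int_\Omega u^{m+1}$ and $\int_\Omega\up^m=\int_\Omega u^m$. Substituting into the first identity yields the second.

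The only step that needs care is the exactness of mass lumping on constants, that is, the identity $\escalarML{\phi}{1}=(\phi,1)$. This is standard, since the lumped weights are exactly $\int_\Omega\varphi_j$. The rest is routine.
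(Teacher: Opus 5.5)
Your proof is correct and follows the paper's argument: test with $\bu=\bn=1$ so the interior-edge upwind forms vanish (each carries the factor $\salto{1}=0$) and the reaction terms cancel, then use that $\Pi_1^h$ preserves integrals. You also supply a justification the paper only attributes to ``the definition of the regularization'': testing \eqref{eq:esquema_DG_Pih1} with $\overline{\phi}=1$ and using that vertex quadrature is exact on $\Pc_1(\T_h)$.
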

\begin{proof}
	Just need to take $\overline{u}=1$ in \eqref{eq:esquema_DG_mod_model_u} and $\overline{n}=1$ in \eqref{eq:esquema_DG_mod_model_n} and add both expressions to obtain:
	\begin{align*}
		\int_\Omega (u^{m+1}+n^{m+1})=\int_\Omega (u^m+n^m).
	\end{align*}
	Moreover, due to the definition of the regularization $\Pi_1^h$, we have that $\int_\Omega u^{m+1}=\int_\Omega \up^{m+1}$ and $\int_\Omega u^{m}=\int_\Omega \up^{m}$, what yields
	\begin{align*}
		\int_\Omega (\up^{m+1}+n^{m+1})=\int_\Omega (\up^m+n^m).
	\end{align*}
\end{proof}

\begin{theorem}[Pointwise bounds]
	\label{thm:discrete_maximum_principle}
	
	Let $(\vv^{m+1}, u^{m+1}, \mu_u^{m+1}, n^{m+1})$ be a solution of the scheme \eqref{esquema_DG_mod_model}, then $u^{m+1}, n^{m+1}\in[0,1]$ in $\Omega$ provided $u^{m},n^{m}\in[0,1]$ in $\Omega$.
\end{theorem}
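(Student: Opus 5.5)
We argue elementwise, exploiting that $u^{m+1},n^{m+1}\in\Pd_0(\T_h)$, so their values are single numbers $u_K,n_K$ per cell. We treat $n^{m+1}$; the argument for $u^{m+1}$ is identical, with $\Pi_0\mu_u^{m+1}$ in place of $\mu_n^{m+1}$ and the opposite sign of the reaction term.

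\textbf{Lower bound.} Suppose $\min_K n^{m+1}_K<0$ and let $K$ be a cell where the minimum is attained. We test \eqref{eq:esquema_DG_mod_model_n} with $\bn=\mathbf{1}_K$, the indicator function of $K$, and examine each term. The left-hand side is $|K|(n_K^{m+1}-n_K^m)/\Delta t<0$, since $n_K^m\ge 0$.

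The reaction term vanishes, because $P(u,n)=h_{r,s}(u)n_\oplus=0$ when $n_K<0$.

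For the diffusion term $-C_n\bupw{\mu_n^{m+1}}{M(n^{m+1})}{\mathbf{1}_K}$, we use the orientation with $\nn_e$ pointing out of $K$, so $\salto{\mathbf{1}_K}=1$ on each edge of $K$. The edge contribution is
\[
\frac{1}{\mathcal{D}_e}\Bigl(\salto{\mu}_\oplus\bigl(M^\uparrow(n_K)+M^\downarrow(n_L)\bigr)_\oplus-\salto{\mu}_\ominus\bigl(M^\uparrow(n_L)+M^\downarrow(n_K)\bigr)_\oplus\Bigr).
\]
Since $n_K<0$ lies below $w^*$, we have $M^\uparrow(n_K)=M(n_K)=0$ and $M^\downarrow(n_K)=0$. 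Also $M^\downarrow\le 0$ everywhere, so $(M^\uparrow(n_K)+M^\downarrow(n_L))_\oplus=0$. Hence only the term $-\salto{\mu}_\ominus(M^\uparrow(n_L))_\oplus\le 0$ survives. Including the minus sign in front of $C_n b_h^{\text{upw}}$, the diffusion term contributes a nonnegative amount.

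For the convective term $-\aupw{\vv^{m+1}}{n^{m+1}}{\mathbf{1}_K}$, each edge of $K$ contributes $-\int_e\bigl((\vv\cdot\nn_e)_\oplus n_K-(\vv\cdot\nn_e)_\ominus n_L\bigr)$. We rewrite this using the local incompressibility \eqref{local_incompressibility} with $\bp=\mathbf{1}_K$, which gives $\sum_{e\subset\partial K}\int_e\vv\cdot\nn_e=0$ (boundary edges have $\vv\cdot\nn=0$). Adding $n_K\sum_e\int_e(\vv\cdot\nn_e)=0$, the convective term becomes
\[
\sum_{e\subset\partial K}\int_e(\vv\cdot\nn_e)_\ominus(n_L-n_K)\ge 0,
\]
because $n_K$ is the minimum.

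\textbf{Closing the lower bound.} The right-hand side is therefore $\ge 0$ while the left-hand side is $<0$, a contradiction. Hence $n^{m+1}\ge 0$.

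\textbf{Upper bound.} Suppose $\max_K n_K>1$ and take $K$ attaining the maximum, again testing with $\mathbf{1}_K$. The left-hand side is now $>0$. The convective term is rewritten the same way and is $\le 0$, since $n_L\le n_K$. The reaction term is $-\delta P_0|K|P(\cdot)(\cdot)_\oplus\le 0$, and in fact $h_{r,s}(u)$ requires $u\in[0,1]$, so it is $\le 0$ in any case.

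The diffusion term needs the most care, and it is the step I expect to be the main obstacle. For $n_K>1>w^*$ we have $M(n_K)=0$, so $M^\uparrow(n_K)=M(w^*)$ and $M^\downarrow(n_K)=-M(w^*)$. Then:
\begin{itemize}
\item The coefficient $(M^\uparrow(n_L)+M^\downarrow(n_K))_\oplus=(M^\uparrow(n_L)-M(w^*))_\oplus=0$, because $M^\uparrow\le M(w^*)$.
\item The other coefficient is $(M(w^*)+M^\downarrow(n_L))_\oplus\ge 0$.
\end{itemize}
So each edge contributes $-C_n\salto{\mu}_\oplus(\cdot)_\oplus/\mathcal{D}_e\le 0$. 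The total right-hand side is then $\le 0$, contradicting the positive left-hand side, so $n^{m+1}\le 1$.

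Throughout, the care lies in tracking the signs of $M^\uparrow$ and $M^\downarrow$, the truncation $(\cdot)_\oplus$, and the edge orientation when $K$ is not the element from which $\nn_e$ points. The argument uses Hypothesis~\ref{hyp:mesh} only through the positivity of $\mathcal{D}_e$. It does not depend on the sign of $\mu$, so it holds regardless of the coupling with $\vv^{m+1}$ and $\mu_u^{m+1}$.
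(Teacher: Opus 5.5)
Your proof is correct and follows essentially the same approach as the paper. Both test with a function supported on an extremal cell, note that the reaction term vanishes or has the favorable sign, control the upwind convective term through the local incompressibility \eqref{local_incompressibility}, and control the upwind mobility term through the splitting $M=M^\uparrow+M^\downarrow$ together with the truncation. The differences are cosmetic: you test with $\mathbf{1}_K$ and argue by contradiction where the paper uses the positive multiple $(u^{m+1}_{K^*})_\ominus\mathbf{1}_{K^*}$, and you write out the upper-bound mobility coefficients and the convective rewriting that the paper leaves to ``similar arguments''.
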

\begin{proof}
	Firstly, we prove
	that $u^{m+1}, n^{m+1}\ge 0$ in $\Omega$.

	To prove that $u^{m+1}\ge 0$ we may take the following $\Pd_0(\T_h)$ test function
	\begin{align*}
		\overline{u}^*=
		\begin{cases}
			(\uKs^{m+1})_\ominus &\text{in }K^*,\\
			0&\text{out of }K^*,
		\end{cases}
	\end{align*}
	where $K^*$ is an element of $\T_h$ such that  $\uKs^{m+1}=\min_{K\in\T_h}u_{K}^{m+1}$. Then, by definition of $P(u,n)$ in \eqref{def:proliferation}, one has 
	$$
	\delta P_0\escalarL{P(u^{m+1},n^{m+1}) (\mu_n^{m+1}-\mu_u^{m+1})_\oplus }{\overline{u}^*}=0,
	$$
	hence equation \eqref{eq:esquema_DG_mod_model_u} becomes
	\begin{equation}
		\label{test_u_0_vdz}
		\vert K^*\vert\delta_t \uKs^{m+1}(\uKs^{m+1})_\ominus = -\aupw{\vv^{m+1}}{u^{m+1}}{\bu^*} -C_u\aupw{\Pi_0\mu_u^{m+1}}{M(u^{m+1})_\oplus }{\bu^*}.
	\end{equation}
	
	Now, since $\uL^{m+1}\ge \uKs^{m+1}$ and the local incompressibility of $\vv^{m+1}$ given in \eqref{local_incompressibility} holds, we can assure that
	$$
	\aupw{\vv^{m+1}}{u^{m+1}}{\bu^*} \le 0.
	$$
	Also, since
	$$M^\uparrow(\uL^{m+1}) \ge M^\uparrow(\uKs^{m+1})\quad
	\hbox{and} \quad
	M^\downarrow(\uL^{m+1}) \le M^\downarrow(\uKs^{m+1}),
	$$
	using that the positive part is an increasing function, we obtain
	$$
	\bupw{\Pi_0\mu_u^{m+1}}{M(u^{m+1})}{\bu^*}\le 0.
	$$
	These inequalities yield $\vert K^*\vert\delta_t \uKs^{m+1}(\uKs^{m+1})_\ominus \ge0$.
	
	Consequently,
	$$
	0
	\le |K^*|(\delta_t \uKs^{m+1})(\uKs^{m+1})_\ominus 
	=
	-\frac{|K^*|}{\Delta t}\left((\uKs^{m+1})_\ominus ^2+\uKs^m(\uKs^{m+1})_\ominus \right)
	\le 0,
	$$
	which implies, since $\uKs^m\ge 0$, that $(\uKs^{m+1})_\ominus =0$. Hence $u^{m+1}\ge0$ in $\Omega$.
	
	Similarly, taking the following $\Pd_0(\T_h)$ test
	function in \eqref{eq:esquema_DG_mod_model_n},
	\begin{align*}
		\overline{n}^*=
		\begin{cases}
			(\nKs^{m+1})_\ominus &\text{in }K^*,\\
			0&\text{out of }K^*,
		\end{cases}
	\end{align*}
	 where $K^*$ is an element of $\T_h$ such that  $\nKs^{m+1}=\min_{K\in\T_h}n_{K}^{m+1}$ we get $n^{m+1}\ge 0$ in $\Omega$.
	
	\
	
	Secondly, we prove that $u^{m+1},n^{m+1}\le 1$ in $\Omega$.
	
	To prove that $u^{m+1}\le 1$, taking the following test function in \eqref{eq:esquema_DG_mod_model_u},
	\begin{align*}
		\overline{u}^*=
		\begin{cases}
			(\uKs^{m+1}-1)_\oplus &\text{in }K^*,\\
			0&\text{out of }K^*,
		\end{cases}
	\end{align*}
	where $K^*$ is an element of $\T_h$ such that  $\uKs^{m+1}=\max_{K\in\T_h}u_{K}^{m+1}$ and using similar arguments than above, we arrive at
	$$
	|K^*|\delta_t \uKs^{m+1}(\uKs^{m+1}-1)_\oplus \le0.
	$$
	Therefore, it is satisfied that
	\begin{align*}
		0&\ge
		|K^*|\delta_t \uKs^{m+1}(\uKs^{m+1}-1)_\oplus 
		=\frac{|K^*|}{\Delta t}\left((\uKs^{m+1}-1)+(1-\uKs^m)\right)(\uKs^{m+1}-1)_\oplus \\
		&=
		\frac{|K^*|}{\Delta t}\left((\uKs^{m+1}-1)^2_\oplus +(1-\uKs^m)(\uKs^{m+1}-1)_\oplus \right)
		\ge0,
	\end{align*}
	what yields $(\uKs^{m+1}-1)_\oplus =0$ and, therefore, $u^{m+1}\le1$ in $\Omega$.
	
	Finally, taking the test function in \eqref{eq:esquema_DG_mod_model_n} 
		\begin{align*}
		\overline{n}^*=
		\begin{cases}
			(\nKs^{m+1}-1)_\oplus &\text{in }K^*,\\
			0&\text{out of }K^*,
		\end{cases}
	\end{align*}
	 where $K^*$ is an element of $\T_h$ such that  $\nKs^{m+1}=\max_{K\in\T_h}n_{K}^{m+1}$ we obtain, similarly, that $n^{m+1}\le 1$ in $\Omega$.
\end{proof}

The following result is a direct consequence of the previous Theorem \ref{thm:discrete_maximum_principle} and the equality \eqref{Pi_1} of the regularization $\Pi_1^h$.
\begin{corollary}
	\label{cor:principio_del_maximo_w_DG}
	It satisfies $\up^{m+1}\in[0,1]$ in $\Omega$ provided $u^{m+1}\in[0,1]$ in $\Omega$.
\end{corollary}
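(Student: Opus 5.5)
The claim follows from the explicit nodal formula \eqref{Pi_1} for $\Pi_1^h$ on piecewise constants. The first step is to identify $\up^{m+1}$ as the element of $\Pc_1(\T_h)$ determined by its vertex values. Since $u^{m+1}\in\Pd_0(\T_h)$, we take $g=u^{m+1}$ in \eqref{eq:esquema_DG_Pih1} with $\overline{\phi}=\varphi_j$, the canonical nodal basis function at vertex $a_j$. The mass-lumped product gives $\escalarML{\up^{m+1}}{\varphi_j}=(\up^{m+1})(a_j)\sum_{L\in\text{Sop}(a_j)}|L|/(d+1)$. On the other side, $\int_L\varphi_j=|L|/(d+1)$ for each $L$ in the support of $\varphi_j$. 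Solving for the nodal value yields exactly \eqref{Pi_1} with $u^m$ replaced by $u^{m+1}$:
$$(\up^{m+1})(a_j)=\sum_{L\in\text{Sop}(a_j)}\lambda_L\,u^{m+1}_L,\qquad \lambda_L=\frac{|L|}{\sum_{L'\in\text{Sop}(a_j)}|L'|}.$$

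The second step uses the structure of these weights. Each $\lambda_L$ is positive and they sum to one, so every nodal value is a convex combination of the cell values $u^{m+1}_L$. By hypothesis each $u^{m+1}_L\in[0,1]$, and $[0,1]$ is convex. Hence $(\up^{m+1})(a_j)\in[0,1]$ for every vertex $a_j$.

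The last step passes from the vertices to all of $\Omega$. On each element $K$, $\up^{m+1}$ is affine, so it equals $\sum_{a_j\in K}(\up^{m+1})(a_j)\varphi_j$. The $\varphi_j$ are the barycentric coordinates of $K$, which are nonnegative and sum to one. The function is therefore again a convex combination of values in $[0,1]$, and so $\up^{m+1}\in[0,1]$ in $\Omega$.

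There is no real obstacle in this argument. The only point requiring care is the computation of the lumped mass and of $\int_L\varphi_j$ that justifies \eqref{Pi_1}, since the positivity of the weights depends on it. Mass lumping is precisely what makes the weights nonnegative; the consistent mass matrix would not give this.
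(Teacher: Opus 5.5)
Your proposal is correct and follows the paper's route. The paper simply calls the corollary a direct consequence of the nodal formula \eqref{Pi_1}, under which each vertex value of $\up^{m+1}$ is a volume-weighted average of the cell values $u^{m+1}_L$. You make the same argument explicit by deriving \eqref{Pi_1} from the mass-lumped definition, and by extending the bound from the vertices to all of $\Omega$ through the barycentric coordinates on each element.
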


The following Lemma is a technical result that we are going to use when computing the discrete energy law. The proof can be found in \cite{acosta2025property}.
\begin{lemma}
	\label{lemma:discrete_energy}
	The following expressions hold
	\begin{align}
		\label{upwind_stabilization_energy_u}
		& \aupw{\vv^ {m+1}}{u^{m+1}}{\Pi_0\mu_u^{m+1}}
		+\ch{u^{m+1}}{\Pi_0\mu_u^{m+1}}{\vv^{m+1}}
		+\sigma_u(h)\shd{\vv^{m+1}}{u^{m+1}}{\Pi_0\mu_u^{m+1}}{\vv^{m+1}}
		\nonumber\\
		& \quad=\frac{1}{2}\sum_{e\in\Ehi}\int_e\frac{(1-\sigma_u(h))|\vv^{m+1}\cdot\nn_e|+\eta}{|\vv^{m+1}\cdot\nn_e|+\eta}\,|\vv^{m+1}\cdot\nn_e|\salto{u^{m+1}}\salto{\Pi_0\mu_u^{m+1}}
		\eqqcolon \tau^{m+1}_u(\eta,\sigma_u),
		\\
		\label{upwind_stabilization_energy_n}
		& \aupw{\vv^ {m+1}}{n^{m+1}}{\mu_n^{m+1}}
		+\ch{n^{m+1}}{\mu_n^{m+1}}{\vv^{m+1}}
		+\sigma_n(h)\shd{\vv^{m+1}}{n^{m+1}}{\mu_n^{m+1}}{\vv^{m+1}}
		\nonumber\\=
		&\quad \frac{1}{2}\sum_{e\in\Ehi}\int_e\frac{(1-\sigma_n(h))|\vv^{m+1}\cdot\nn_e|+\eta}{|\vv^{m+1}\cdot\nn_e|+\eta}\,|\vv^{m+1}\cdot\nn_e|\salto{n^{m+1}}\salto{\mu_n^{m+1}}
		\eqqcolon \tau^{m+1}_n(\eta,\sigma_n).
	\end{align}
\end{lemma}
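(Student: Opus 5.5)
The identity is purely algebraic and edge-local, so the plan is to rewrite each of the three terms on the left-hand side of \eqref{upwind_stabilization_energy_u} as a sum over interior edges and add them. Both $u^{m+1}$ and $\Pi_0\mu_u^{m+1}$ lie in $\Pd_0(\T_h)$. Write $\phi=u^{m+1}$, $\mu=\Pi_0\mu_u^{m+1}$, $\vv=\vv^{m+1}$ and $v_e=\vv\cdot\nn_e$, which is single-valued on each $e\in\Ehi$ because $\Vh$ has continuous normal components. The same computation with $\phi=n^{m+1}$, $\mu=\mu_n^{m+1}$ and $\sigma_n$ gives \eqref{upwind_stabilization_energy_n}.

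\emph{Step 1: the centered term.} Since $\phi$ and $\mu$ are piecewise constant, I will use the elementwise divergence theorem:
\[
-\int_\Omega\nabla\cdot(\phi\vv)\,\mu=-\sum_K \phi_K\mu_K\int_{\partial K}\vv\cdot\nn_K .
\]
Boundary edges drop out because $\vv\cdot\nn=0$ on $\partial\Omega$. Regrouping by interior edges $e=K\cap L$ gives
\[
-\sum_{e\in\Ehi}\int_e v_e\,\salto{\phi\mu}.
\]
Next I apply the product rule for jumps, $\salto{\phi\mu}=\media{\phi}\salto{\mu}+\salto{\phi}\media{\mu}$. The term $\media{\phi}\salto{\mu}$ exactly cancels the edge term in the definition \eqref{centered_discretization}, which leaves
\[
\ch{\phi}{\mu}{\vv}=-\sum_{e\in\Ehi}\int_e v_e\,\salto{\phi}\media{\mu}.
\]

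\emph{Step 2: the upwind term.} Using $v_e=(v_e)_\oplus-(v_e)_\ominus$, I rewrite the upwind value as
\[
(v_e)_\oplus\phi_K-(v_e)_\ominus\phi_L=v_e\media{\phi}+\tfrac12|v_e|\salto{\phi}.
\]
This can be checked in the two sign cases. It gives
\[
\aupw{\vv}{\phi}{\mu}=\sum_{e\in\Ehi}\int_e\Big(v_e\media{\phi}\salto{\mu}+\tfrac12|v_e|\salto{\phi}\salto{\mu}\Big).
\]
Adding Step 1, the convective parts combine to $\sum_{e\in\Ehi}\int_e v_e\big(\media{\phi}\salto{\mu}-\salto{\phi}\media{\mu}\big)$. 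I then need to show that this equals zero. The edge-local identity $\media{\phi}\salto{\mu}-\salto{\phi}\media{\mu}=\phi_L\mu_K-\phi_K\mu_L$ does not vanish by itself, so this is the step I expect to be the main obstacle.

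The fix is to return to the elementwise form. The expression equals
\[
\sum_K\sum_{e\subset\partial K}\int_e(\vv\cdot\nn_K)\,\tfrac12\big(\phi_{K}\mu_{L_e}-\phi_{L_e}\mu_K\big).
\]
Rather than forcing this to vanish, I will recompute it against the local incompressibility \eqref{local_incompressibility}, taken with test functions $\bp=\mu\chi_K$ and $\bp=\phi\chi_K$. These give $\sum_{e\subset\partial K}\int_e\vv\cdot\nn_K=0$ for every element $K$. With this, $-\int_\Omega\nabla\cdot(\phi\vv)\mu=0$, so the centered term is simply $-\sum_{e\in\Ehi}\int_e v_e\media{\phi}\salto{\mu}$. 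This cancels the $v_e\media{\phi}\salto{\mu}$ part of $a_h^{\rm upw}$ exactly, and only $\tfrac12|v_e|\salto{\phi}\salto{\mu}$ survives.

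\emph{Step 3: the stabilization term.} Evaluating \eqref{def:sh2} with $\buu=\uu=\vv$ gives
\[
-\tfrac12\sum_{e\in\Ehi}\int_e \frac{v_e^2}{|v_e|+\eta}\salto{\phi}\salto{\mu}
\]
(for $\eta=0$ this is $-\tfrac12|v_e|\salto{\phi}\salto{\mu}$). Multiplying by $\sigma_u(h)$ and adding to $\tfrac12|v_e|\salto{\phi}\salto{\mu}$, the integrand becomes
\[
\tfrac12|v_e|\,\frac{|v_e|+\eta-\sigma_u|v_e|}{|v_e|+\eta}\,\salto{\phi}\salto{\mu},
\]
which is exactly $\tau_u^{m+1}(\eta,\sigma_u)$.
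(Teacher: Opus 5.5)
Your final argument is correct and is the natural proof; the paper itself gives none and defers to \cite{acosta2025property}. The route is: write the upwind flux as $v_e\media{\phi}+\tfrac12|v_e|\salto{\phi}$, let the edge term of $c_h$ cancel $v_e\media{\phi}\salto{\mu}$ once incompressibility kills the volume term, and combine the surviving $\tfrac12|v_e|\salto{\phi}\salto{\mu}$ with $\sigma_u(h)s_h^\eta$. The flux splitting in Step 2 and all of Step 3 are fine.

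However, the end of Step 1 is false as written, and the ``main obstacle'' in Step 2 is an artifact of it. The edge term in \eqref{centered_discretization} carries a minus sign. It therefore does not cancel the $-v_e\media{\phi}\salto{\mu}$ coming from $-v_e\salto{\phi\mu}$; the two add, giving
\[
\ch{\phi}{\mu}{\vv}=-\sum_{e\in\Ehi}\int_e v_e\left(2\media{\phi}\salto{\mu}+\salto{\phi}\media{\mu}\right).
\]
With the correct expression,
\[
\aupw{\vv}{\phi}{\mu}+\ch{\phi}{\mu}{\vv}=\sum_{e\in\Ehi}\int_e\left(\tfrac12|v_e|\salto{\phi}\salto{\mu}-v_e\salto{\phi\mu}\right),
\]
and the last term vanishes at once by \eqref{local_incompressibility} with $\bp=\phi\mu\in\Pd_0(\T_h)$. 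There is no obstacle.

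You currently state two formulas for $c_h$: $-\sum_{e}\int_e v_e\salto{\phi}\media{\mu}$ at the end of Step 1, and $-\sum_{e}\int_e v_e\media{\phi}\salto{\mu}$ after the ``fix''. Under incompressibility these are negatives of each other, so the write-up must drop the first rather than keep both. The elementwise rewriting in Step 2 also has its sign reversed.

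Finally, testing with $\bp=\mu\chi_K$ and $\bp=\phi\chi_K$ does not give $\int_{\partial K}\vv\cdot\nn_K=0$ when $\mu_K=\phi_K=0$. It does give what you actually need after multiplying by $\phi_K$ and summing over $K$. Cleaner options are to use $\bp=\chi_K$, or simply to note that $\nabla\cdot\vv^{m+1}=0$ pointwise, since $\nabla\cdot\Vh\subset\Ph$ and $\int_\Omega\nabla\cdot\vv^{m+1}=\int_{\partial\Omega}\vv^{m+1}\cdot\nn=0$.
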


\begin{theorem}[Energy law]
	\label{thm:energia_esquema}
	Any solution
	of the scheme \eqref{esquema_DG_mod_model} satisfies the following \textbf{discrete energy law} \begin{align}
		\label{ley_energia_discreta}
		\delta_t E(\up^{m+1},n^{m+1})&+C_u\bupw{\mup_u^{m+1}}{M(u^{m+1})}{\Pi_0\mu_u^{m+1}}+C_n\bupw{\mu_n^{m+1}}{M(n^{m+1})}{\mu_n^{m+1}}\notag\\&
		+\delta P_0\int_\Omega P(u^{m+1},n^{m+1}) (\mu_n^{m+1}-\mup_u^{m+1})_\oplus ^2+\frac{1}{K}\int_\Omega|\vv^{m+1}|^2\nonumber\\&
		+\frac{\Delta t \, \varepsilon^2}{2}\int_\Omega|\delta_t\nabla \up^{m+1}|^2
		+\frac{\Delta t}{2\delta}\int_\Omega \vert\delta_t n^{m+1}\vert^2\le \tau^{m+1}_u(\eta,\sigma_u)+\tau^{m+1}_n(\eta,\sigma_n),
	\end{align}
	where the energy $E(u,n)$ is defined in \eqref{energia}.
\end{theorem}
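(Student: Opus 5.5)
We mimic the continuous proof of Proposition~\ref{prop:energy_law_continuous} at the discrete level. We test \eqref{eq:esquema_DG_mod_model_v} with $\bvv=\vv^{m+1}$ and \eqref{eq:esquema_DG_mod_model_p} with $\bp=p^{m+1}$, so the pressure term cancels. We test \eqref{eq:esquema_DG_mod_model_u} with $\bu=\Pi_0\mu_u^{m+1}\in\Pd_0(\T_h)$ and \eqref{eq:esquema_DG_mod_model_n} with $\bn=\mu_n^{m+1}\in\Pd_0(\T_h)$. We test \eqref{eq:esquema_DG_mod_model_muu} with $\overline{\mu}_u=\delta_t\up^{m+1}\in\Pc_1(\T_h)$. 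Adding the $u$, $n$ and $\vv$ equations, the three transport contributions are grouped via Lemma~\ref{lemma:discrete_energy}: $a_h^{\rm upw}+c_h+\sigma s_h^\eta$ equals $\tau_u^{m+1}$ (respectively $\tau_n^{m+1}$). These terms move to the right-hand side. What remains on the left is $\frac1K\|\vv^{m+1}\|^2$, the two $b_h^{\rm upw}$ diffusion terms, and the reaction terms. The latter combine as $\delta P_0\int P(\mu_n-\Pi_0\mu_u)_\oplus(\mu_n-\Pi_0\mu_u)=\delta P_0\int P(\mu_n-\Pi_0\mu_u)_\oplus^2\ge0$, exactly as in the continuous case.

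Next we connect the time-derivative terms. We use $(\delta_t u^{m+1},\Pi_0\mu_u^{m+1})=(\delta_t u^{m+1},\mu_u^{m+1})$, which holds because $\delta_t u^{m+1}\in\Pd_0$. By the definition \eqref{eq:esquema_DG_Pih1} of $\Pi_1^h$, $(\delta_t u^{m+1},\mu_u^{m+1})=(\delta_t\up^{m+1},\mu_u^{m+1})_h$, which is precisely the left side of \eqref{eq:esquema_DG_mod_model_muu} tested with $\delta_t\up^{m+1}$. This is why mass lumping was introduced. The right side then gives three pieces:
\begin{itemize}
\item $\varepsilon^2(\nabla\up^{m+1},\delta_t\nabla\up^{m+1})=\delta_t\frac{\varepsilon^2}{2}\|\nabla\up^{m+1}\|^2+\frac{\Delta t\varepsilon^2}{2}\|\delta_t\nabla\up^{m+1}\|^2$;
\item the convex splitting term, handled by the standard estimate $(f(\up^{m+1},\up^m),\delta_t\up^{m+1})\ge\delta_t\int F(\up^{m+1})$, obtained from Taylor expansions with $F_i$ convex and $F_e$ concave on $[0,1]$;
\item $-\chi_0(n^{m+1},\delta_t\up^{m+1})$.
\end{itemize}
For the $n$-equation, $(\delta_t n^{m+1},\mu_n^{m+1})=\frac1\delta(\delta_t n^{m+1},n^{m+1})-\chi_0(\delta_t n^{m+1},\Pi_0\up^m)$. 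The first part equals $\delta_t\frac1{2\delta}\|n^{m+1}\|^2+\frac{\Delta t}{2\delta}\|\delta_t n^{m+1}\|^2$. In the second, $\Pi_0$ may be dropped since $\delta_t n^{m+1}\in\Pd_0$. The two coupling terms then telescope:
\[
(n^{m+1},\up^{m+1}-\up^m)+(n^{m+1}-n^m,\up^m)=(n^{m+1},\up^{m+1})-(n^m,\up^m),
\]
so together they give exactly $-\chi_0\delta_t\int\up^{m+1}n^{m+1}$. This is why $\mu_n$ uses the explicit $\up^m$.

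The step I expect to be the main obstacle is the convex splitting inequality. $F_e''(u)=3u^2-3u-\frac14$ is nonpositive only on a neighborhood of $[0,1]$. Concavity of $F_e$ is therefore needed precisely on $[0,1]$, and this requires the pointwise bounds $\up^m,\up^{m+1}\in[0,1]$. Those bounds follow from Theorem~\ref{thm:discrete_maximum_principle} and Corollary~\ref{cor:principio_del_maximo_w_DG}, by induction from $u^0=\Pi_0u_0\in[0,1]$. Also $F_i''=3/4\ge0$. Writing $F_i(a)-F_i(b)\le F_i'(a)(a-b)$ and $F_e(a)-F_e(b)\le F_e'(b)(a-b)$ pointwise gives the claim. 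The resulting inequality from this step is why the statement reads $\le$ rather than $=$.

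Collecting all terms yields \eqref{ley_energia_discreta}.
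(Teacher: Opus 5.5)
Your proposal is correct and follows the paper's proof step by step: the same test functions, the same grouping of the transport terms via Lemma~\ref{lemma:discrete_energy}, the same telescoping of the $\chi_0$ coupling and the same convex-splitting inequality. It is in fact more careful than the written proof in two places: you use the mass-lumped product $(\delta_t\up^{m+1},\mu_u^{m+1})_h$, and you observe that $F_e$ is concave only near $[0,1]$, so the pointwise bounds of Theorem~\ref{thm:discrete_maximum_principle} and Corollary~\ref{cor:principio_del_maximo_w_DG} are genuinely needed there.

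One bookkeeping point is shared with the paper: the grouped transport terms enter the summed identity with a plus sign, so moving them across gives $-\left(\tau^{m+1}_u(\eta,\sigma_u)+\tau^{m+1}_n(\eta,\sigma_n)\right)$ on the right-hand side rather than $+\left(\tau^{m+1}_u(\eta,\sigma_u)+\tau^{m+1}_n(\eta,\sigma_n)\right)$. This is irrelevant when $\sigma_u(h)=\sigma_n(h)=1$ and $\eta=0$, where both terms vanish, but it matters otherwise because these terms have no definite sign.
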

\begin{proof}
	By taking $\bvv=\vv^{m+1}$, $\bp=p^{m+1}$,
	$\overline{u}=\Pi_0\mu_u^{m+1}$,  $\overline{\mu}_u=\delta_t \up^{m+1}$, $\overline{n}=\mu_n^{m+1}$ in \eqref{eq:esquema_DG_mod_model_v}--\eqref{eq:esquema_DG_mod_model_n} and testing \eqref{eq:esquema_DG_mod_model_mun} by $\delta_t n^{m+1}$ we arrive at
	\begin{subequations}
		\label{esquema_DG_mod_model_energia}
		\begin{align}
			\label{eq:esquema_DG_mod_model_energia_v}
			\frac{1}{K}\escalarL{\vv^{m+1}}{\vv^{m+1}}
			&-\escalarL{p^{m+1}}{\nabla\cdot\vv^{m+1}}
			+\ch{u^{m+1}}{\Pi_0\mu_u^{m+1}}{\vv^{m+1}}+\ch{n^{m+1}}{\mu_n^{m+1}}{\vv^{m+1}}
			\nonumber\\
			&+\shd{\vv^{m+1}}{u^{m+1}}{\Pi_0\mu_u^{m+1}}{\vv^{m+1}}
			+\shd{\vv^{m+1}}{n^{m+1}}{\mu_n^{m+1}}{\vv^{m+1}}=0,
			\\
			\label{eq:esquema_DG_mod_model_energia_p}
			\escalarL{\nabla\cdot\vv^{m+1}}{p^{m+1}}&=0,\\
			\label{eq:esquema_DG_mod_model_energia_u}
			\escalarL{\delta_t u^{m+1}}{\mup_u^{m+1}}&+\aupw{\vv^{m+1}}{u^{m+1}}{\Pi_0\mu_u^{m+1}}+C_u\bupw{\mup_u^{m+1}}{M(u^{m+1})}{\Pi_0\mu_u^{m+1}}&\notag\\&=\delta P_0 \escalarL{P(u^{m+1},n^{m+1}) (\mu_n^{m+1}-\mup_u^{m+1})_\oplus }{\mup_u^{m+1}},\\
			\label{eq:esquema_DG_mod_model_energia_muu}
			\escalarL{\mu_u^{m+1}}{\delta_t \up^{m+1}}&=\varepsilon^2 \escalarLd{\nabla \up^{m+1}}{\delta_t\nabla \up^{m+1}}+ \escalarL{f(\up^{m+1},\up^m)}{\delta_t \up^{m+1}}\nonumber\\&\quad-\chi_0\escalarL{ n^{m+1}}{\delta_t \up^{m+1}},\\
			\label{eq:esquema_DG_mod_model_energia_n}
			\escalarL{\delta_t n^{m+1}}{\mu_n^{m+1}}&+\aupw{\vv^{m+1}}{n^{m+1}}{\mu_n^{m+1}}+C_n\bupw{\mu_n^{m+1}}{M(n^{m+1})}{\mu_n^{m+1}}&\notag\\&=-\delta P_0 \escalarL{P(u^{m+1},n^{m+1}) (\mu_n^{m+1}-\mup_u^{m+1})_\oplus }{\mu_n^{m+1}},\\
			\label{eq:esquema_DG_mod_model_energia_mun}
			\escalarL{\mu_n^{m+1}}{\delta_t n^{m+1}}&=\frac{1}{\delta}\escalarL{n^{m+1}}{\delta_t n^{m+1}}  -\chi_0 \escalarL{\up^{m}}{\delta_t n^{m+1}}.
		\end{align}
	\end{subequations}
	
	Observe that, by \eqref{eq:esquema_DG_Pi0}--\eqref{eq:esquema_DG_Pih1},
	\begin{align*}
		\escalarL{\delta_t \up^{m+1}}{\mu_u^{m+1}}&=\escalarL{\delta_t u^{m+1}}{\mu_u^{m+1}},\\
		\escalarL{\delta_t u^{m+1}}{\mu_u^{m+1}}&=\escalarL{\delta_t u^{m+1}}{\Pi_0 \mu_u^{m+1}},
	\end{align*}
	hence in particular 
	$$(\delta_t u^{m+1}, \Pi_0 \mu_u^{m+1})=(\delta_t \up^{m+1}, \mu_u^{m+1}).$$

	Then, by adding \eqref{eq:esquema_DG_mod_model_energia_v}--\eqref{eq:esquema_DG_mod_model_energia_mun} and taking into account Lemma~\ref{lemma:discrete_energy}, we obtain
	\begin{align*}
		\frac{1}{K}\int_\Omega|\vv^{m+1}|^2 &+C_u\bupw{\mup_u^{m+1}}{M(u^{m+1})}{\Pi_0\mu_u^{m+1}}
		+C_n\bupw{\mu_n^{m+1}}{M(n^{m+1})}{\mu_n^{m+1}}
		\\&+\varepsilon^2\escalarLd{\nabla \up^{m+1}}{\delta_t\nabla \up^{m+1}} +\escalarL{f(\up^{m+1},\up^m)}{\delta_t \up^{m+1}}\\
		&+\delta P_0\escalarL{P(u^{m+1},n^{m+1}) (\mu_n^{m+1}-\mup_u^{m+1})_\oplus }{\mu_n^{m+1}-\mup_u^{m+1}}\\&+\frac{1}{\delta}\escalarL{n^{m+1}}{\delta_t n^{m+1}}-\chi_0\escalarL{ n^{m+1}}{\delta_t \up^{m+1}}-\chi_0 \escalarL{\up^{m}}{\delta_t n^{m+1}}= 0 .
	\end{align*}
	Taking into account that
	\begin{align*}
	\varepsilon^2\escalarLd{\nabla \up^{m+1}}{\delta_t \nabla \up^{m+1}}
	&=\frac{\varepsilon^2}{2}\delta_t\int_\Omega |\nabla \up^{m+1}|^2 
	+\frac{\Delta t \,\varepsilon^2}{2}\int_\Omega|\delta_t\nabla \up^{m+1}|^2,\\
	\frac{1}{\delta}\escalarL{n^{m+1}}{\delta_t n^{m+1}}&=\frac{1}{2\delta}\delta_t\int_\Omega |n^{m+1}|^2 +\frac{\Delta t}{2\delta}\int_\Omega|\delta_t n^{m+1}|^2,\\
	\chi_0\delta_t\int_\Omega u^{m+1}n^{m+1}&=\chi_0\escalarL{ n^{m}}{\delta_t \up^{m+1}}+\chi_0 \escalarL{\up^{m+1}}{\delta_t n^{m+1}},\\
	\int_\Omega P(u^{m+1}, n^{m+1}) (\mu_n^{m+1}-\mup_u^{m+1})_\oplus ^2&=\escalarL{P(u^{m+1},n^{m+1}) (\mu_n^{m+1}-\mup_u^{m+1})_\oplus }{\mu_n^{m+1}-\mup_u^{m+1}},
	\end{align*}
	and by adding and subtracting $\delta_t\int_\Omega F(\up^{m+1})$,
	we get the following equality
	\begin{align*}
		\delta_t E(\up^{m+1},n^{m+1})&+C_u\bupw{\mup_u^{m+1}}{M(u^{m+1})}{\mup_u^{m+1}}+C_n\bupw{\mu_n^{m+1}}{M(n^{m+1})}{\mu_n^{m+1}}\notag
		\\&+\delta P_0\int_\Omega P(u^{m+1},n^{m+1}) (\mu_n^{m+1}-\mup_u^{m+1})_\oplus ^2\notag+\frac{1}{K}\int_\Omega|\vv^{m+1}|^2\\&+\frac{\Delta t\,\varepsilon^2}{2}\int_\Omega|\delta_t\nabla \up^{m+1}|^2+\frac{\Delta t}{2\delta}\int_\Omega \vert\delta_t n^{m+1}\vert^2\notag\\&=\delta_t\int_\Omega F(\up^{m+1})-\escalarL{f(\up^{m+1},\up^m)}{\delta_t \up^{m+1}}.
	\end{align*}

	Finally, from the convex-splitting approximation \eqref{convex-split} (see \cite{eyre_1998_unconditionally,guillen-gonzalez_linear_2013,acosta-soba_CH_2022}), one has  that
	$$
	\int_\Omega\delta_t F(\up^{m+1}) - \escalarL{f(\up^{m+1},\up^m)}{\delta_t( \up^{m+1})}\le0,
	$$
	which implies \eqref{ley_energia_discreta}.

\end{proof}

\begin{corollary}
	If $\sigma_u(h)=\sigma_n(h)=1$ and $\eta=0$, then the scheme \eqref{esquema_DG_mod_model} is unconditionally energy stable in the sense
	$$
	E(\up^{m+1},n^{m+1})\le E(\up^{m},n^{m}),
	\quad \forall\, m\ge 0.
	$$
	Moreover, we have the following uniform estimates for any solution of the scheme $\vv=(\vv^m)_{m}$, $u=(u^m)_{m}$, $\mu_u=(\mu_u^m)_{m}$, $n=(n^m)_{m}$ of the scheme \eqref{esquema_DG_mod_model}:
	\begin{align*}
		u,\, \Pih_1 u,\, n\in l^\infty(L^\infty(\Omega)) \text{ with } \nabla\Pih_1 u\in l^\infty(L^2(\Omega)),\\
		\vv\in l^2(L^2(\Omega)) \text{ with }\nabla\cdot \vv=0,\\
		\sqrt{P(u,n)}(\mu_n-\mup_u)_\oplus \in l^2(L^2(\Omega)),
	\end{align*}
\end{corollary}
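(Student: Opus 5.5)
The plan is to start from the discrete energy law of Theorem~\ref{thm:energia_esquema} and show that, for $\sigma_u(h)=\sigma_n(h)=1$ and $\eta=0$, every term on the left-hand side other than $\delta_t E$ is nonnegative and the right-hand side vanishes.

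\textbf{Right-hand side.} With $\eta=0$ and $\sigma=1$, the weight in Lemma~\ref{lemma:discrete_energy} is
\[
\frac{(1-\sigma)|\vv^{m+1}\cdot\nn_e|+\eta}{|\vv^{m+1}\cdot\nn_e|+\eta}=0
\]
on every edge where $\vv^{m+1}\cdot\nn_e\neq0$. On the remaining edges the factor $|\vv^{m+1}\cdot\nn_e|$ itself is zero. With the convention $\sign{0}=0$ in \eqref{def:sh2}, the identity of Lemma~\ref{lemma:discrete_energy} still holds there. Hence $\tau_u^{m+1}(0,1)=\tau_n^{m+1}(0,1)=0$.

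\textbf{Nonnegativity of the left-hand side.} The terms $\frac1K\int|\vv^{m+1}|^2$, the $\Delta t$ numerical-dissipation terms and the proliferation term are squares with nonnegative coefficients, since $P\ge0$. For the mobility terms I would use \eqref{def:aupw_barycenter} with $\bw=\mu$:
\[
\bupw{\mu}{M(w)}{\mu}=\sum_e \frac{1}{\mathcal{D}_e(\T_h)}\int_e\Bigl(\salto{\mu}_\oplus^2(\cdots)_\oplus+\salto{\mu}_\ominus^2(\cdots)_\oplus\Bigr)\ge0,
\]
because $\salto{\mu}=\salto{\mu}_\oplus-\salto{\mu}_\ominus$. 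This is exactly where the truncation of the mobility by the positive part is used. Dropping all these terms and multiplying by $\Delta t$ gives $E(\up^{m+1},n^{m+1})\le E(\up^m,n^m)$.

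\textbf{Uniform estimates.} The $l^\infty(L^\infty)$ bounds on $u$, $n$ and $\up$ follow by induction from Theorem~\ref{thm:discrete_maximum_principle} and Corollary~\ref{cor:principio_del_maximo_w_DG}, starting from $u^0,n^0\in[0,1]$. For $\nabla\up\in l^\infty(L^2)$, I would sum the energy law from $0$ to $m$ to get
\[
E(\up^{m},n^{m})+\Delta t\sum_{k\le m}\Bigl(\tfrac1K\|\vv^{k}\|^2+\delta P_0\|\sqrt{P}(\mu_n^k-\mup_u^k)_\oplus\|^2\Bigr)\le E(\up^0,n^0).
\]
The lower-order parts of $E$, namely $F(\up)$, $\chi_0\up n$ and $n^2/(2\delta)$, are bounded in absolute value by constants times $|\Omega|$ thanks to the $[0,1]$ bounds. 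This yields a uniform bound on $\frac{\varepsilon^2}{2}\|\nabla\up^m\|^2$, assuming $\varepsilon>0$. The same summed inequality gives $\vv\in l^2(L^2)$ and the $l^2(L^2)$ bound on $\sqrt{P(u,n)}(\mu_n-\mup_u)_\oplus$. Finally, $\nabla\cdot\vv^{m+1}=0$ holds because $\div\Vh\subset\Ph$: testing \eqref{eq:esquema_DG_mod_model_p} with $\bp=\nabla\cdot\vv^{m+1}$, which has zero mean since $\vv^{m+1}\cdot\nn=0$, gives the result.

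\textbf{Main obstacle.} Two points need care. First, $E(\up^0,n^0)$ must be bounded independently of $h$, which requires $\|\nabla\Pi_1^h\Pi_0u_0\|$ to be uniformly bounded. I would take this from the $H^1$-stability of these projections on quasi-uniform meshes for $u_0\in H^1$, or else simply state the estimate relative to the initial discrete energy. Second, the degenerate edges where $\vv\cdot\nn_e=0$ need the convention on $\sign{0}$ described above.
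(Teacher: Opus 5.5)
Your proposal is correct and follows the paper's own route. The paper's proof likewise just notes (citing earlier work) that $\tau_u^{m+1}=\tau_n^{m+1}=0$ and that both $b_h^{\text{upw}}$ terms are nonnegative, and then reads the estimates off Theorem~\ref{thm:discrete_maximum_principle} and the energy law~\eqref{ley_energia_discreta}. Your extra details are sound refinements of what the paper leaves implicit: the explicit sign check via $\salto{\mu}=\salto{\mu}_\oplus-\salto{\mu}_\ominus$, and the remark that $h$-uniformity requires $E(\up^0,n^0)$ to be bounded independently of $h$ and $\varepsilon>0$. The convention on $\sign{0}$ is not actually needed, though, since $(\vv^{m+1}\cdot\nn_e)\,\sign{\vv^{m+1}\cdot\nn_e}=|\vv^{m+1}\cdot\nn_e|$ whatever value is assigned at $0$.
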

\begin{proof}
	It is straightforward to check (see \cite{acosta-soba_KS_2022}) that $$\bupw{\mup_u}{M(u^{m+1})}{\mup_u^{m+1}},\, \bupw{\mu_n^{m+1}}{M(n^{m+1})}{\mu_n^{m+1}}\ge 0\text{ and }\tau_u^{m+1}(\eta,\sigma_u), \tau_n^{m+1}(\eta,\sigma_n)=0.$$ Hence, using \eqref{ley_energia_discreta} we conclude that $\delta_t E(\up^{m+1},n^{m+1})\le 0$.

In addition, the uniform  estimates can be obtained directly from Theorem~\ref{thm:discrete_maximum_principle} and the discrete energy law \eqref{ley_energia_discreta}.
\end{proof}

\begin{remark}
	Notice that for any choice of $\sigma_u(h), \sigma_n(h),\eta>0$ the scheme \eqref{esquema_DG_mod_model} is weakly energy stable in the sense that the right-hand side of \eqref{ley_energia_discreta} is consistent because it depends on the jumps of $u,n,\Pi_0 \mu_u$ and $\mu_n$. The choice of $\sigma_u(h)=\sigma_n(h)=1$ and $\eta=0$ is the one that yields the best energy stability properties, but it may cause some problems for the convergence of nonlinear iterative methods. In particular, since $\shd{\cdot}{\cdot}{\cdot}$, defined in \eqref{def:sh2}, involves a sign function when $\eta=0$, convergence for high-order iterative methods involving the gradient, such as nonsmooth Newton method, is not guaranted. In such cases, it may be convenient to choose $\sigma_u(h), \sigma_n(h)>0$ and $\eta>0$ in order to penalize the stabilization term introduced.
\end{remark}

Now, we focus on the existence of the scheme \eqref{esquema_DG_mod_model} for the case with the regularized stabilization ($\eta>0$). For this, we consider the following well-known result.
\begin{theorem}[Leray-Schauder fixed point theorem]
	\label{thm:Leray-Schauder}
	Let $\X$ be a Banach space and let $T\colon\X\longrightarrow\X$ be a continuous and compact operator. If the set $$\{x\in\X\colon x=\alpha \,T(x)\quad\text{for some } 0\le\alpha\le1\}$$ is uniformly bounded (with respect to $\alpha$), then $T$ has  at least one fixed point.
\end{theorem}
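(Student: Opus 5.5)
This is the classical Leray--Schauder (Schaefer) alternative. I would reduce it to Schauder's fixed point theorem, taken as known: a continuous map of a nonempty closed bounded convex subset of a Banach space into itself with relatively compact image has a fixed point. The reduction uses a radial retraction.

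\emph{Setup.} By hypothesis there is $M>0$ such that $\|x\|_\X\le M$ whenever $x=\alpha\,T(x)$ for some $\alpha\in[0,1]$. Fix $R>M$ and let $B_R=\{x\in\X\colon \|x\|_\X\le R\}$, which is closed, bounded and convex. Let $r\colon\X\to B_R$ be the radial retraction,
\begin{equation*}
r(y)=\begin{cases} y, & \|y\|_\X\le R,\\ R\,y/\|y\|_\X, & \|y\|_\X>R.\end{cases}
\end{equation*}
It is continuous; in fact it is Lipschitz with constant at most $2$, and one can check continuity directly across the sphere $\|y\|_\X=R$. Define $\tilde T=r\circ T\colon B_R\to B_R$.

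\emph{Hypotheses of Schauder.} $\tilde T$ is continuous as a composition of continuous maps. Since $T$ is compact, $T(B_R)$ is relatively compact. Then $\tilde T(B_R)\subset r\bigl(\overline{T(B_R)}\bigr)$, which is compact as the continuous image of a compact set. So Schauder's theorem gives $x\in B_R$ with $x=\tilde T(x)$.

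\emph{Conclusion.} If $\|T(x)\|_\X\le R$, then $x=T(x)$ and we are done. Otherwise $x=\alpha\,T(x)$ with $\alpha=R/\|T(x)\|_\X\in(0,1)$, so $x$ lies in the bounded set of the hypothesis and $\|x\|_\X\le M<R$. But then $\|x\|_\X=\|r(T(x))\|_\X=R$, a contradiction. Hence $T$ has a fixed point.

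The only delicate step is checking that $\tilde T$ keeps the compactness and continuity of $T$, which the retraction argument above handles. All the real depth lies in Schauder's theorem itself, which I would cite rather than reprove.
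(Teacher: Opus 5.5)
Your proof is correct and complete, given Schauder's theorem. The retraction $r$ is continuous, $r\circ T$ maps $B_R$ into a compact subset of $B_R$, and you correctly rule out the case $\|T(x)\|_\X>R$: there $x=\alpha\,T(x)$ with $\alpha=R/\|T(x)\|_\X\in(0,1)$ and $\|x\|_\X=R>M$, contradicting the a priori bound.

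The paper gives no proof of this statement to compare with. It quotes the theorem as a well-known result and uses it only in Theorem~\ref{thm:existencia_solucion_DG-UPW}. What you have written is the standard textbook reduction of the Leray--Schauder (Schaefer) alternative to Schauder's theorem via radial retraction.

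In the paper's use, $\X=\Wh$ is finite-dimensional. Compactness of $T$ is then automatic, as the paper itself remarks, and your Schauder step is just Brouwer's fixed point theorem applied to $r\circ T$ on the closed ball $B_R$. For the purposes of the paper, your argument therefore depends only on Brouwer's theorem.
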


\begin{theorem}[Existence]
	\label{thm:existencia_solucion_DG-UPW}
	There is at least one solution of the scheme
	\eqref{esquema_DG_mod_model} with $\eta>0$.
\end{theorem}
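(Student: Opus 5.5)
The plan is to apply the Leray--Schauder fixed point theorem (Theorem~\ref{thm:Leray-Schauder}) on the finite-dimensional space $\X=\Vh\times\Ph^0\times\Pd_0(\T_h)\times\Pc_1(\T_h)\times\Pd_0(\T_h)$, where $\Ph^0$ is the zero-mean subspace of $\Ph$. Since $\X$ is finite dimensional, compactness is automatic and only continuity and the a priori bound need checking. Given $(\hat\vv,\hat p,\hat u,\hat\mu_u,\hat n)\in\X$, define $T(\hat\vv,\hat p,\hat u,\hat\mu_u,\hat n)=(\vv,p,u,\mu_u,n)$ as the solution of the \emph{linear} problem obtained from \eqref{esquema_DG_mod_model} by freezing every nonlinear coefficient at the hatted values. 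Concretely:
\begin{itemize}
\item In \eqref{eq:esquema_DG_mod_model_v}, use $\ch{\hat u}{\Pi_0\hat\mu_u}{\bvv}$, $\ch{\hat n}{\hat\mu_n}{\bvv}$ and the terms $s_h^\eta$ evaluated at the hatted arguments. Keep $\frac1K(\vv,\bvv)-(p,\div\bvv)$ implicit.
\item In \eqref{eq:esquema_DG_mod_model_u} and \eqref{eq:esquema_DG_mod_model_n}, use the unknown $u$ (resp.\ $n$) only in $\delta_t u$ (resp.\ $\delta_t n$), and evaluate $a_h^{\rm upw}$, $b_h^{\rm upw}$ and the proliferation term at the hatted variables.
\item In \eqref{eq:esquema_DG_mod_model_muu}, $\mu_u$ is determined explicitly through the mass-lumped product, using $\Pi_1^h u$ with the new $u$.
\end{itemize}
This linear problem is uniquely solvable. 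The $(u,n)$ equations are diagonal mass-matrix problems, $\mu_u$ then follows, and $(\vv,p)$ solves a Darcy saddle point problem whose unique solvability follows from coercivity of $\frac1K(\cdot,\cdot)$ on $\Vh$ and the discrete inf-sup condition \eqref{eq:inf-sup_discrete}. Continuity of $T$ follows because every frozen coefficient is continuous in the hatted variables. This is exactly where $\eta>0$ is used: $\frac{\uu\cdot\nn_e}{|\uu\cdot\nn_e|+\eta}$ is continuous, whereas $\mathrm{sign}$ is not. The remaining coefficients ($M^\uparrow$, $M^\downarrow$, positive parts, $P$, $f$) are continuous as well. A fixed point of $T$ is a solution of the scheme.

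The main step, and the main obstacle, is the uniform bound on $\{x=\alpha T(x),\ \alpha\in[0,1]\}$. For such $x$, the equations hold with $\alpha$ multiplying the right-hand sides. It is convenient to write this as the scheme with $u^m,n^m$ replaced by $\alpha u^m,\alpha n^m$, with the transport, diffusion and reaction terms multiplied by $\alpha$, and so on. I would first redo the argument of Theorem~\ref{thm:discrete_maximum_principle} on these $\alpha$-scaled equations. Since $\alpha u^m,\alpha n^m\in[0,1]$, the same test functions should give $u,n\in[0,1]$. The upwind signs and local incompressibility \eqref{local_incompressibility} are unaffected by the factor $\alpha$, which is the point to check carefully. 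This bounds $u$ and $n$ in every norm, since all norms are equivalent in finite dimension.

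Next, $\Pi_1^h u\in[0,1]$, so \eqref{eq:esquema_DG_mod_model_muu} with $\bmu_u=\mu_u$ and an inverse inequality give $\|\mu_u\|\le C(h,\Delta t)$, and $\mu_n$ is then bounded by \eqref{eq:esquema_DG_mod_model_mun}. For the velocity, test \eqref{eq:esquema_DG_mod_model_v} with $\bvv=\vv$ and use \eqref{eq:esquema_DG_mod_model_p} with $\bp=p$. The $c_h$ terms are bounded by $C(h)\|\vv\|$ since $u,n,\mu_u,\mu_n$ are already bounded. The $s_h^\eta$ terms carry the factor $\frac{|\vv\cdot\nn_e|}{|\vv\cdot\nn_e|+\eta}\le 1$, so they are also bounded by $C(h)\|\vv\|$ times bounded jumps. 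This gives $\frac1K\|\vv\|^2\le C\|\vv\|$, hence $\vv$ is bounded.

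Finally, the pressure bound comes from the inf-sup condition \eqref{eq:inf-sup_discrete}: $\beta\|p\|\le\sup_{\bvv}(p,\div\bvv)/\|\bvv\|_{\Hdiv}$, where $(p,\div\bvv)$ is expressed through the other, already bounded, terms of \eqref{eq:esquema_DG_mod_model_v}. All these bounds are independent of $\alpha$, though they depend on $h$ and $\Delta t$, which is allowed. Theorem~\ref{thm:Leray-Schauder} then yields a fixed point, that is, a solution of the scheme.
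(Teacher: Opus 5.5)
Your proposal is correct and follows essentially the same route as the paper. Both use the same frozen-coefficient linear map for Leray--Schauder, get continuity from $\eta>0$, and derive the same chain of $\alpha$-uniform bounds: the discrete maximum principle for $u,n$, then $\mu_u$, then $\vv$ by testing the Darcy equations with $(\vv,p)$, and finally $p$ by the discrete inf-sup condition. Restricting the pressure to the zero-mean subspace of $\Ph$ and noting explicitly that the $s_h^\eta$ factor is bounded by one are welcome refinements of details the paper leaves implicit.
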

\begin{proof}
	Given two functions $z_u, z_n\in\Pd_0(\T_h)$ with $0\le z_u, z_n \le 1$, we define the map
	$$T\colon \Wh\coloneqq \Vh\times\Ph\times\Pd_0\times \Pc_1\times\Pd_0\longrightarrow\Wh$$ such that $$T(\widehat{\vv},\widehat{p},\widehat{u},\widehat{\mu}_u,\widehat{n})=(\vv,p,u,\mu_u,n)\in\Wh$$ is the unique solution of the linear (and decoupled, computing first $\mu_n$, next $n$, $u$ and the Darcy mixed formulation $(\vv,p)$, and finally $\mu_u$) scheme:
	
	\begin{subequations}
		\label{esquema_lineal_Leray-Schauder_DG_upw_Eyre}
		\begin{align}
			\label{eq:esquema_lineal_Leray-Schauder_DG_upw_Eyre_v}
			\frac{1}{K}\escalarL{\vv}{\bvv}
			&-\escalarL{p}{\nabla \cdot \bvv}
			+\ch{\widehat u}{\Pi_0\widehat\mu_u}{\bvv}+\ch{\widehat n}{\mu_n}{\bvv}
			\nonumber\\
			&+\sigma_u(h)\, \shd{\widehat\vv}{\widehat u}{\Pi_0\widehat\mu_u}{\bvv}
			+\sigma_n(h)\, \shd{\widehat\vv}{\widehat n}{\mu_n}{\bvv}=0,&&\forall\bvv\in\Vh,
			\\
			\label{eq:esquema_lineal_Leray-Schauder_DG_upw_Eyre_p}
			\escalarL{\nabla\cdot \vv}{\bp}=&0, &&\forall\bp\in\Ph,\\
			\label{eq:esquema_lineal_Leray-Schauder_DG_upw_Eyre_u}
			\frac{1}{\Delta t}\escalarL{u-z_u}{\overline{u}}
			=&-\aupw{\widehat\vv}{\widehat u}{\bu}-C_u\bupw{\Pi_0\widehat{\mu}_u}{M(\widehat{u})}{\overline{u}}\nonumber\\&+\delta P_0 \escalarL{P(\widehat{u},\widehat{n}) ( \mu_n-\Pi_0\widehat \mu_u)_\oplus }{\overline{u}},&&\forall\overline{u}\in \Pd_0(\T_h),\\
			\label{eq:esquema_lineal_Leray-Schauder_DG_upw_Eyre_mu_u}
			\escalarML{\mu_u}{\overline{\mu}_u}=&\varepsilon^2 \escalarL{\nabla \up}{\nabla\overline{\mu}_u}+ \escalarL{f(\up,\Pi_1^h z_u)}{\overline{\mu}_u}-\chi_0\escalarL{n}{\overline{\mu}_u},&&\forall\overline{\mu}_u\in \Pc_1(\T_h),\\
			\label{eq:esquema_lineal_Leray-Schauder_DG_upw_Eyre_n}
			\frac{1}{\Delta t}\escalarL{ n-z_n}{\overline{n}}=&-\aupw{\widehat\vv}{\widehat n}{\bn}-C_n\bupw{\mu_n}{M(\widehat n)}{\overline{n}}\nonumber\\&-\delta P_0\escalarL{P(\widehat u, \widehat n) (\mu_n-\Pi_0\widehat\mu_u)_\oplus }{\overline{n}},&&\forall\overline{n}\in \Pd_0(\T_h),
		\end{align}
	\end{subequations}
	where $\mu_n $ is already known as
	\begin{equation}
		\label{eq:esquema_lineal_Leray-Schauder_DG_upw_Eyre_mu_n}
			\mu_n=\frac{1}{\delta}\widehat n -\chi_0 \Pi_0(\Pi_h^1 z_u).
	\end{equation}
	
	It is straightforward to check that, for any given $(\widehat{\vv},\widehat{u},\widehat\mu_u,\widehat{n})\in \Wh$, there is a unique solution $(u,n)\in \Pd_0(\T_h)\times \Pd_0(\T_h)$ of \eqref{eq:esquema_lineal_Leray-Schauder_DG_upw_Eyre_u} and \eqref{eq:esquema_lineal_Leray-Schauder_DG_upw_Eyre_n}. Also, due to the inf-sup condition \eqref{eq:inf-sup_discrete} satisfied by the pair of spaces $\Vh\times\Ph$ there is a unique solution $(\vv,p)\in \Vh\times\Ph$ of \eqref{eq:esquema_lineal_Leray-Schauder_DG_upw_Eyre_v} and \eqref{eq:esquema_lineal_Leray-Schauder_DG_upw_Eyre_p} (see \cite{boffi2013mixed,ern_theory_2010}). Finally, there is a unique solution $\mu_u\in\Pc_1(\T_h)$ such that \eqref{eq:esquema_lineal_Leray-Schauder_DG_upw_Eyre_mu_u} holds. Therefore, the operator $T$ is well defined.

	Now we will prove that  $T$ is under the hypotheses of the Leray-Schauder fixed-point theorem \ref{thm:Leray-Schauder}.
	
	First, we check that $T$ is continuous. Let $\{(\widehat\vv_j,\widehat p_j,\widehat{u}_j,\widehat{\mu}_{u_j},\widehat{n}_j)\}_{j\in\N}\subset\Wh$ be a sequence such that $\lim_{j\to\infty}(\widehat\vv_j,\widehat{p}_j,\widehat{u}_j,\widehat{\mu}_{u_j},\widehat{n}_j)=(\widehat{\vv},\widehat{p},\widehat{u},\widehat{\mu}_u,\widehat{n})$. Taking into account that all norms are equivalent in $\Wh$ since it is a finite-dimensional space and that $\Omega$ is bounded, the convergence can be considered to be uniform. Then, since all the functions appearing in \eqref{esquema_lineal_Leray-Schauder_DG_upw_Eyre} are continuous in $\Omega$ (due to $\eta>0$), taking limits when $j\to \infty$ in \eqref{esquema_lineal_Leray-Schauder_DG_upw_Eyre} (with $\widehat{\vv}\coloneqq\widehat{\vv}_j$, $\widehat{u}\coloneqq\widehat{u}_j$, $\widehat{\mu}_u\coloneqq\widehat{\mu}_{u_j}$, $\widehat{n}\coloneqq\widehat{n}_j$)
	and using the fact that the solution of \eqref{esquema_lineal_Leray-Schauder_DG_upw_Eyre} is unique, we get that $$\lim_{j\to \infty} T(\widehat\vv_j, \widehat p_j,\widehat u_j,\widehat\mu_j, \widehat n_j)=T(\widehat\vv, \widehat p, \widehat u,\widehat\mu, \widehat n)=T\left(\lim_{j\to \infty}(\widehat\vv_j,\widehat p_j, \widehat u_j,\widehat\mu_j, \widehat n_j)\right),$$ hence $T$ is continuous. Therefore, $T$ is also compact since $\Wh$ has finite dimension.
	
	Finally, let us prove that the set 
	$$
		B=\{(\vv,p,u,\mu_u,n)\in\Wh\colon 
		(\vv,p,u,\mu_u,n)=\alpha\, T(\vv,p,u,\mu_u,n)\text{ for some } 0\le\alpha\le1\}
	$$
	is bounded (independently of $\alpha$). The case $\alpha=0$ is trivial so we will assume that $\alpha\in(0,1]$.
	
	If $(u,\mu_u,n)\in B$, then $(u,\mu_u,n)\in\Pd_0(\T_h)\times \Pc_1(\T_h)\times\Pd_0(\T_h)$ is the solution of
	\begin{subequations}
		\label{esquema_DG_upw_Eyre_Leray-Schauder}
		\begin{align}
			\label{eq:esquema_DG_upw_Eyre_Leray-Schauder_v}
			\frac{1}{K}\escalarL{\vv}{\bvv}
			&-\escalarL{p}{\nabla \cdot \bvv}
			+\alpha\, \ch{u}{\Pi_0\mu_u}{\bvv}
			+\alpha\, \ch{n}{\mu_n}{\bvv}
			\nonumber\\
			&+\alpha\, \sigma_u(h)\, \shd{\vv}{u}{\Pi_0\mu_u}{\bvv}
			+\alpha\, \sigma_n(h)\, \shd{\vv}{n}{\mu_n}{\bvv}=0,&&\forall\bvv\in\Vh,
			\\
			\label{eq:esquema_DG_upw_Eyre_Leray-Schauder_p}
			\escalarL{\nabla\cdot \vv}{\bp}=&0, &&\forall\bp\in\Ph,\\
			\label{eq:esquema_DG_upw_Eyre_Leray-Schauder_u}
			\frac{1}{\Delta t}\escalarL{u-\alpha z_u}{\overline{u}}
			=&-\alpha\aupw{\vv}{u}{\bu}-\alpha C_u\bupw{\Pi_0\mu_u}{M(u)}{\overline{u}}\nonumber\\&+\alpha\delta P_0 \escalarL{P(u,n) ( \mu_n-\Pi_0 \mu_u)_\oplus }{\overline{u}},&&\forall\overline{u}\in \Pd_0(\T_h),\\
			\label{eq:esquema_DG_upw_Eyre_Leray-Schauder_mu_u}
			\escalarML{\mu_u}{\overline{\mu}_u}=&\varepsilon^2 \escalarL{\nabla \up}{\nabla\overline{\mu}_u}+ \escalarL{f(\up,\Pi_1^h z_u)}{\overline{\mu}_u}-\chi_0\escalarL{n}{\overline{\mu}_u},&&\forall\overline{\mu}_u\in \Pc_1(\T_h),\\
			\label{eq:esquema_DG_upw_Eyre_Leray-Schauder_n}
			\frac{1}{\Delta t}\escalarL{ n-\alpha z_n}{\overline{n}}=&-\alpha\aupw{\vv}{ n}{\bn}-\alpha C_n\bupw{\mu_n}{M(n)}{\overline{n}}\nonumber\\&-\alpha\delta P_0\escalarL{P(u, n) (\mu_n-\Pi_0\mu_u)_\oplus }{\overline{n}},&&\forall\overline{n}\in \Pd_0(\T_h),
		\end{align}
	\end{subequations}
	where
	\begin{equation}
		\label{eq:esquema_DG_upw_Eyre_Leray-Schauder_mu_n}
			\mu_n=\frac{1}{\delta}\widehat n -\chi_0 \Pi_0(\Pi_h^1 z_u).
	\end{equation}
	
	Since $0\le z_u, z_n\le 1$, it can be proved that $0\le u, n\le 1$ using the same arguments than in Theorem~\ref{thm:discrete_maximum_principle}. Hence, $\norma{u}_{L^\infty(\Omega)},\norma{n}_{L^\infty(\Omega)}<+ \infty$.
	
	Now, we will check that $\mu_u$ is bounded. Testing 
	\eqref{eq:esquema_DG_upw_Eyre_Leray-Schauder_mu_u} with $\overline{\mu}_u=\mu_u$ we obtain that
	$$
		\normaL{\mu_u}^2
\le \varepsilon^2\norma{\up}_{H^1(\Omega)}\norma{\mu_u}_{H^1(\Omega)}
+\normaL{f(\up,\Pi_1^hz_u)}\normaL{\mu_u}
+\chi_0 \normaL{n}\normaL{\mu_u}.
$$
	Since all the norms are equivalent in the finite-dimensional space $\Pc_1(\T_h)$,  there is $K_1\ge 0$ such that
	$$\normaL{\mu_u}\le \varepsilon^2K_1\norma{\up}_{H^1(\Omega)}+\normaL{f(\up,\Pi_1^hz_u)}+\norma{n}_{L^2(\Omega)}.$$
	Consequently, since $\normaL{f(\up,\Pi_1^hz_u)}$ is bounded due to $0\le \up,\Pi_1^h z_u\le 1$ and $\norma{\up}_{L^1(\Omega)}$ and $\norma{n}_{L^1(\Omega)}$ are also bounded, we conclude that $\normaL{\mu_u}$ is bounded.

	Then, by testing \eqref{eq:esquema_DG_upw_Eyre_Leray-Schauder_v} and \eqref{eq:esquema_DG_upw_Eyre_Leray-Schauder_p} with $\bvv=\vv$ and $\bp=p$, and using the bounds for $u,n,\mu_u$, we get that $\normaL{\vv}^2\le K \norma{\vv}_{L^\infty(\Omega)}$, where $K$ is a constant independent of $\alpha$. Hence, using the equivalence of norms, $\vv$ is also bounded. Consequently, using the inf-sup discrete condition \eqref{eq:inf-sup_discrete} for Darcy, we get that $p$ is also bounded.

	Since $\Wh$ is a finite-dimensional space where all the norms are equivalent, we have proved that $B$ is bounded.
	
	Thus, using the Leray-Schauder fixed point theorem \ref{thm:Leray-Schauder}, there is a solution $(\vv,p,u,\mu_u,n)$ of the scheme
	\eqref{esquema_DG_mod_model}.
\end{proof}

In the case with the non-regularized stabilization ($\eta=0$), the existence of a solution of the scheme \eqref{esquema_DG_mod_model} is not straightforward due to the discontinuity of the $\sign{\cdot}$ function.

\section{Numerical experiments}
\label{sec:numerical_experiments}
Now, we will present several numerical experiments that match the results presented in the previous section. In particular, we are going to repeat the tests in \cite[Section 4.2]{acosta2023structure}, where we show the irregular growth of a tumor due to the irregular distribution of the nutrients over the domain, but considering the effects of the Darcy flow. Therefore, we will keep the same domain $\Omega=[-10,10]^2$, mesh, parameters $\varepsilon=0.1$, $\delta=0.01$, and initial conditions (shown in Figure \ref{fig:test-2_initial_cond})
\begin{align*}
	u_0 &= \frac{1}{2}\left[\tanh\left(\frac{1.75 - \sqrt{x^2 + y^2}}{\sqrt{2}\varepsilon}\right) + 1\right],\\
	n_0 &= \frac{1}{2}(1-u_0)
	+ \frac{1}{4}\left[\tanh\left(\frac{1 - \sqrt{(x - 2.45)^2 + (y - 1.45)^2}}{\sqrt{2}\varepsilon}\right)\right.\\
	&\quad
	\left.+ \tanh\left(\frac{1.75 - \sqrt{(x + 3.75)^2 + (y - 1)^2}}{\sqrt{2}\varepsilon}\right)
	+ \tanh\left(\frac{2.5 - \sqrt{x^2 + (y + 5)^2}}{\sqrt{2}\varepsilon}\right) + 3\right],
\end{align*}
as in \cite{acosta2023structure}.

\begin{figure}[h]
	\centering
	\begin{tabular}{cc}
		\hspace*{-1.1cm}$\boldsymbol{u_0}$ & \hspace*{-1.1cm}$\boldsymbol{n_0}$\\
		\includegraphics[scale=0.24]{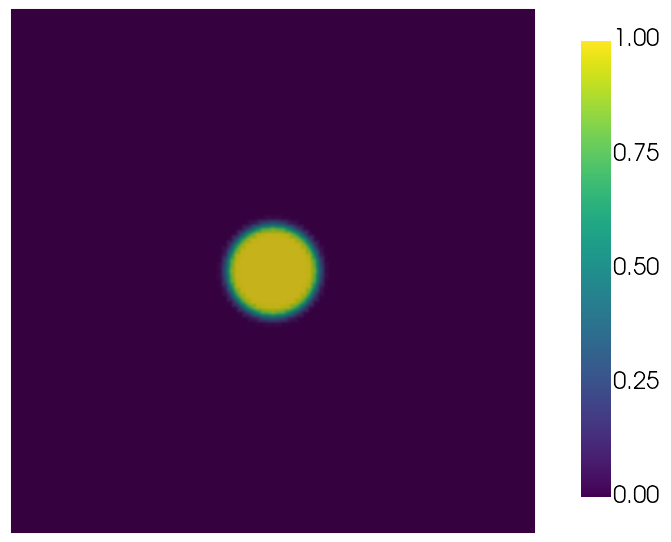} &
		\includegraphics[scale=0.24]{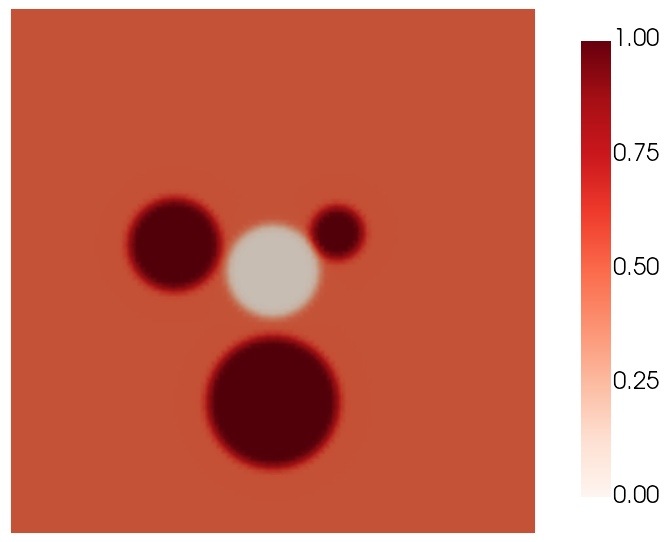}
	\end{tabular}
	\caption{Initial conditions ($u_0$ left, $n_0$ right).}
	\label{fig:test-2_initial_cond}
\end{figure}

Regarding the discrete ``Darcy inf-sup'' pair, we will consider $\Vh=BDM_1(\T_h)$ and $\Ph=\Pd_{0}(\T_h)$.

Moreover, to approximate the solution of \eqref{esquema_DG_mod_model}, we will use Newton's method. In order to prevent convergence issues with the nonlinear iterative solver, we will take $\sigma_u(h)=\sigma_n(h)=0$. Although this choice does not guarantee the energy stability property, in the examples that we have computed, a decreasing energy is observed. If one computes these tests with $\sigma_u(h)=\sigma_n(h)>0$, a smaller $\Delta t$ is required to guarantee convergence of Newton's method and a smaller mesh size $h$ is needed to decrease the numerical diffusion introduced by the stabilization term. Consequently, it may be recommended to choose $\sigma_u(h)=\sigma_n(h)=1$ and $\eta\approx 0$ to enforce the energy stability property only if the computed iteration does not satisfy that the energy is decreasing. This can be adapted online if necessary, for instance, by checking if the energy is decreasing at each iteration and, if not, enforcing it with stabilization.

These results have been computed using the Python interface of the library \texttt{FEniCSx}, \cite{AlnaesEtal2014, ScroggsEtal2022,BasixJoss}, and the figures have been plotted using \texttt{PyVista}, \cite{sullivan2019pyvista}. The code to reproduce these results is available at \url{https://github.com/danielacos/Papers-src}.

Notice that, $\up^{m}$ and $\Pi_1^hn^{m}$ are the approximations of the phase field variable $u$ and the nutrients variable $n$, respectively, which have been plotted in the following figures. In addition, the vector $\vv^{m}$ has been plotted as a streamline plot in order to show the behavior of the Darcy flow, where the magnitude of the vector is represented by the color, with darker arrows indicating higher magnitude.

In particular, we represent the behavior of the solution of the model under different set of parameters, see Figures~\ref{fig:test-2_reference_symmetric}--\ref{fig:test-2_chi-1_u}. We set $C_u=2.8$, $C_n=2.8\cdot 10^{-4}$, $h\approx 0.28$ for every experiment and we vary the rest of the parameters with respect to the reference test in Figure ~\ref{fig:test-2_reference_symmetric} ($P_0=0.5$, $\chi_0=0.1$ and $\Delta t=0.1$). For the sake of brevity, we only show the nutrients variable for the reference test.

In fact, as in \cite{acosta2023structure},we have considered two different types of mobility and proliferation functions. On the one hand, the typical symmetric functions
\begin{equation}
	\label{symmetric_functions}
	M(v)=h_{1,1}(v),\quad P(u,n)=h_{1,1}(u) n_\oplus,
\end{equation}
have been used (see Figure~\ref{fig:test-2_reference_symmetric}). However, on the other hand, we have considered the following non-symmetric choice of the mobility and proliferation functions
\begin{equation}
	\label{nonsymmetric_functions}
	M(v)=h_{5,1}(v),\quad P(u,n)=h_{1,3}(u) n_\oplus,
\end{equation}
which emphasize the growth of the tumor in a non-saturated state and its dissemination in a saturated state. The associated results are plotted in Figure~\ref{fig:test-2_reference}.

In fact, aside from the more local interaction between the tumor and nutrients for the nonsymmetric choice of the mobility and proliferation function with respect to the symmetric one, we can observe how the Darcy flow has a stronger effect in the nonsymmetric case. In this sense, we can observe that for $K=0.1$ (first and fourth rows of Figures~\ref{fig:test-2_reference_symmetric}, \ref{fig:test-2_reference} and \ref{fig:test-2_P0-0.001_u} --\ref{fig:test-2_chi-1_u}), the tumor has the same behavior than in \cite{acosta2023structure}. However, as long as we increase the value of $K$, we observe a faster phase separation due to the Darcy flow (as reported in previous works such as \cite{brunk2026review}), particularly in the case of the nonsymmetric choice. Regarding the symmetric mobility and proliferation function, the differences between $K=0.1$ and $K=1$ are barely visible while they are more significant when $K=10$.

In all of these cases the pointwise bounds of $u$ and $n$ are preserved and the energy is nonincreasing as shown in Figure~\ref{fig:test-2_min_max_energy} for the reference test.

\begin{figure}
	\vspace*{-0.5cm}
	\centering
	\begin{tabular}{ccccc}
		& & \hspace*{-1cm}$t=10.0$ & \hspace*{-1cm}$t=20.0$ & \hspace*{-1cm}$t=50.0$ \\
		\multirow{3}{*}{\vspace*{-6.2cm}\rotatebox[origin=c]{90}{$\boldsymbol{u}$}}&\rotatebox[origin=c]{90}{$K=0.1$} &
		\raisebox{-0.47\height}{\includegraphics[scale=0.204]{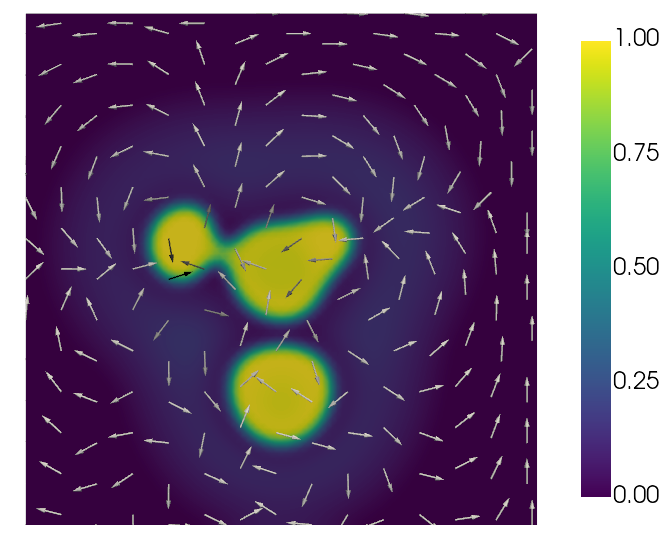}} &
		\raisebox{-0.47\height}{\includegraphics[scale=0.204]{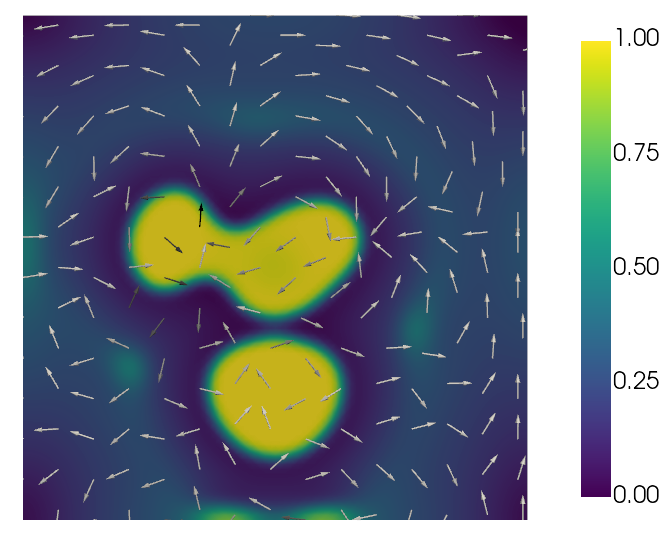}} &
		\raisebox{-0.47\height}{\includegraphics[scale=0.204]{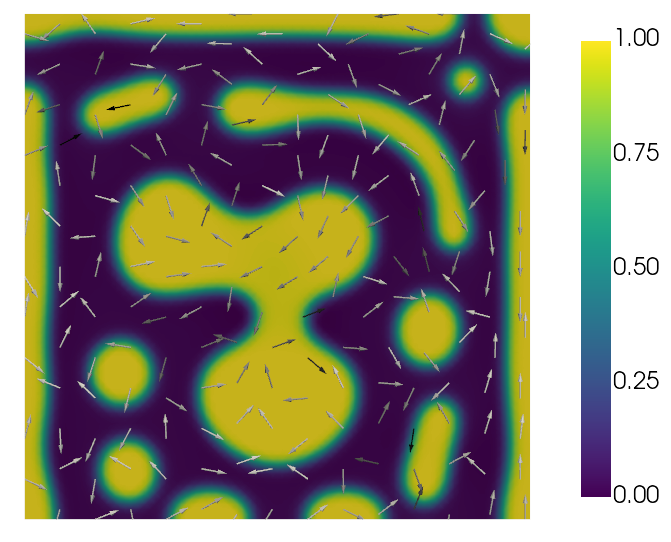}} \\
		&\rotatebox[origin=c]{90}{$K=1$} &
		\raisebox{-0.47\height}{\includegraphics[scale=0.204]{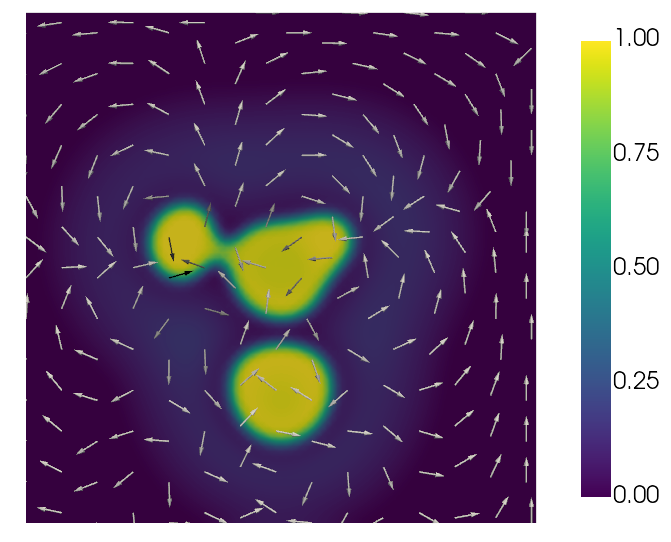}} &
		\raisebox{-0.47\height}{\includegraphics[scale=0.204]{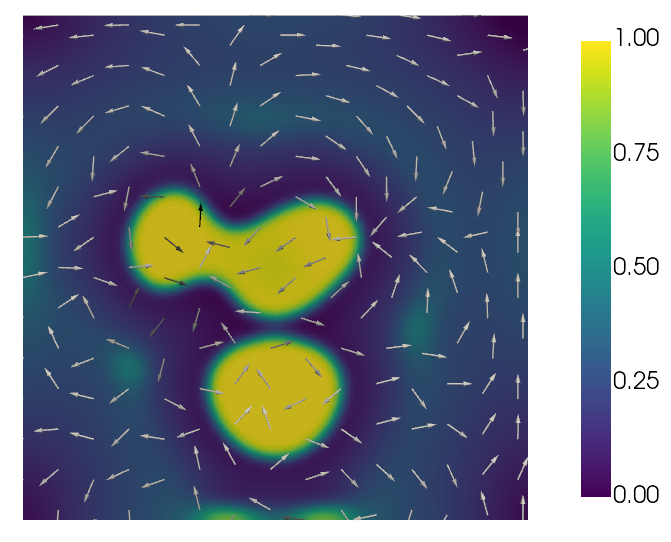}} &
		\raisebox{-0.47\height}{\includegraphics[scale=0.204]{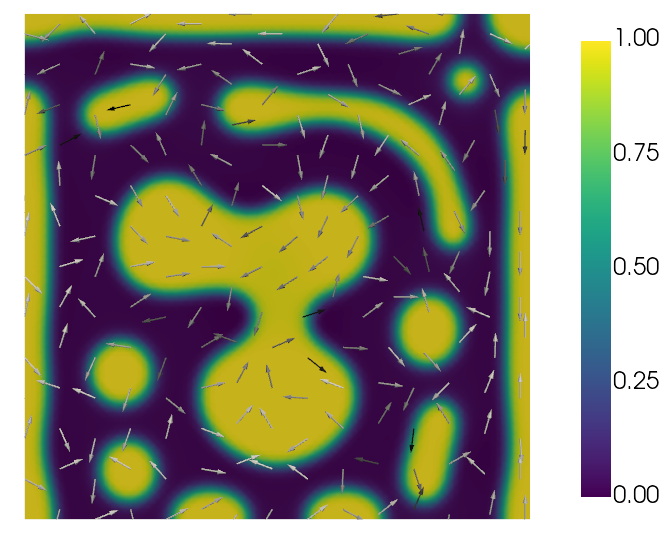}}\\
		&\rotatebox[origin=c]{90}{$K=10$} &
		\raisebox{-0.47\height}{\includegraphics[scale=0.204]{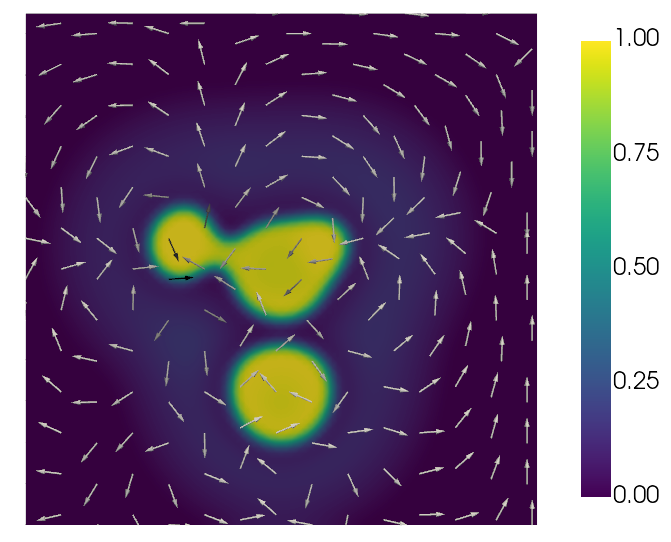}} &
		\raisebox{-0.47\height}{\includegraphics[scale=0.204]{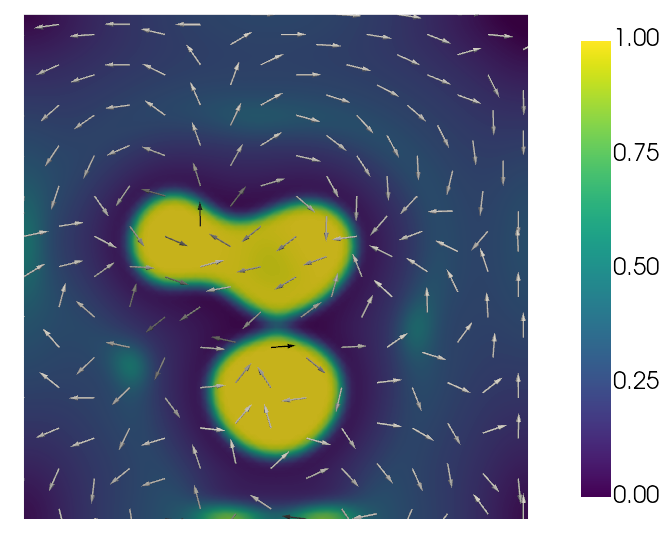}} &
		\raisebox{-0.47\height}{\includegraphics[scale=0.204]{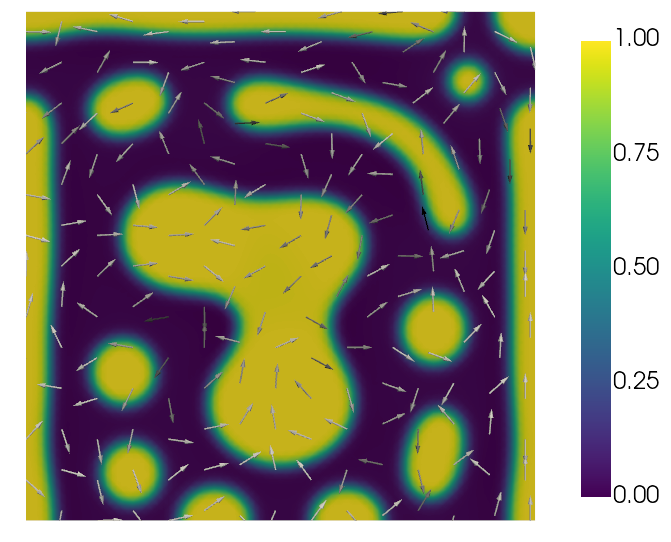}}\\
		\hdashline\vspace*{-0.5cm}\\
		\multirow{3}{*}{\vspace*{-6.2cm}\rotatebox[origin=c]{90}{$\boldsymbol{n}$}}&\rotatebox[origin=c]{90}{$K=0.1$} &
		\raisebox{-0.47\height}{\includegraphics[scale=0.204]{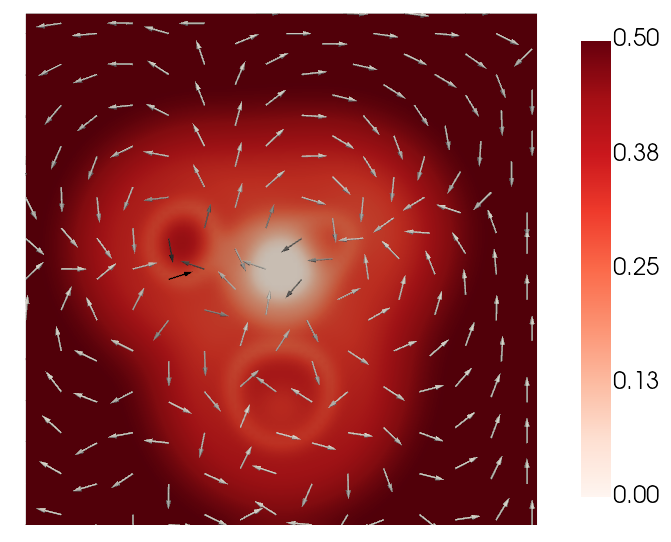}} &
		\raisebox{-0.47\height}{\includegraphics[scale=0.204]{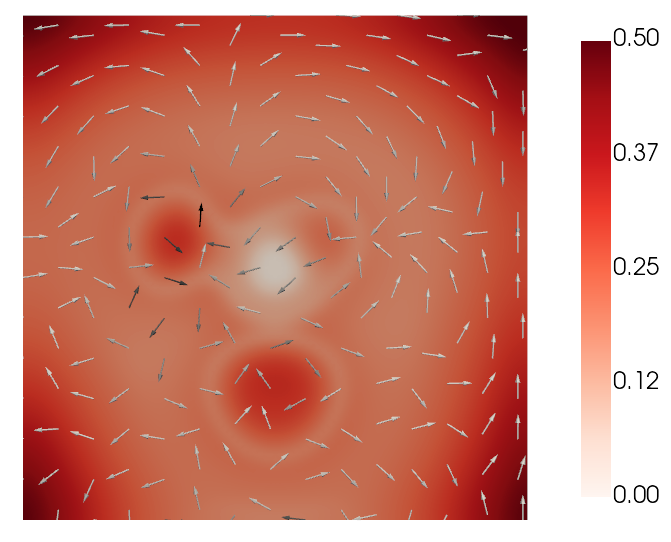}} &
		\raisebox{-0.47\height}{\includegraphics[scale=0.204]{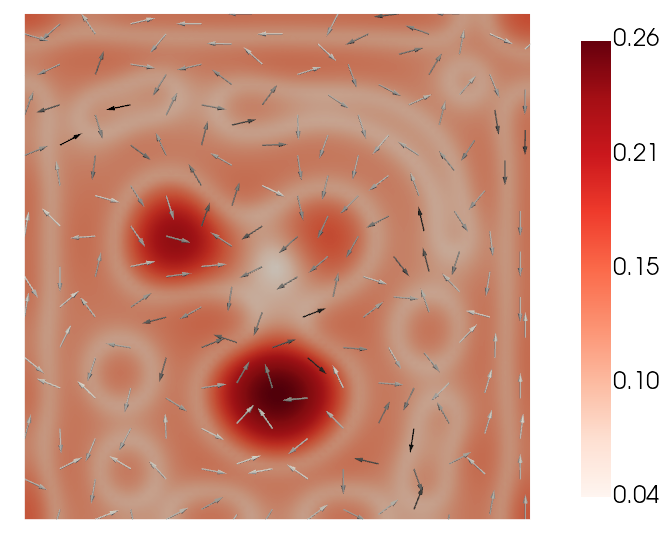}} \\
		&\rotatebox[origin=c]{90}{$K=1$} &
		\raisebox{-0.47\height}{\includegraphics[scale=0.204]{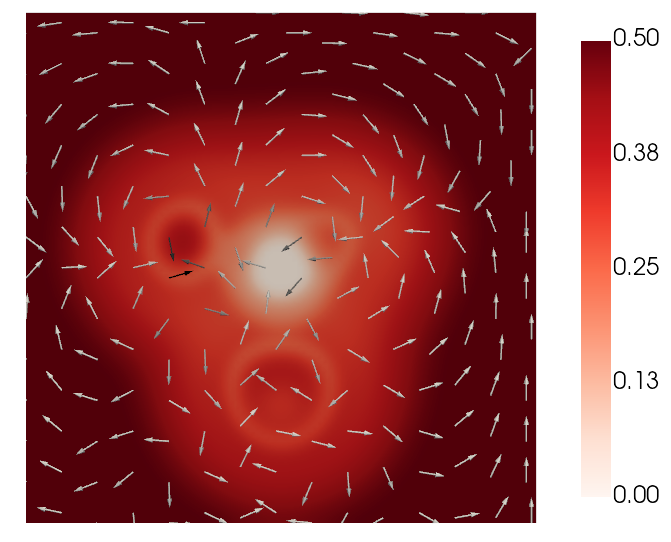}} &
		\raisebox{-0.47\height}{\includegraphics[scale=0.204]{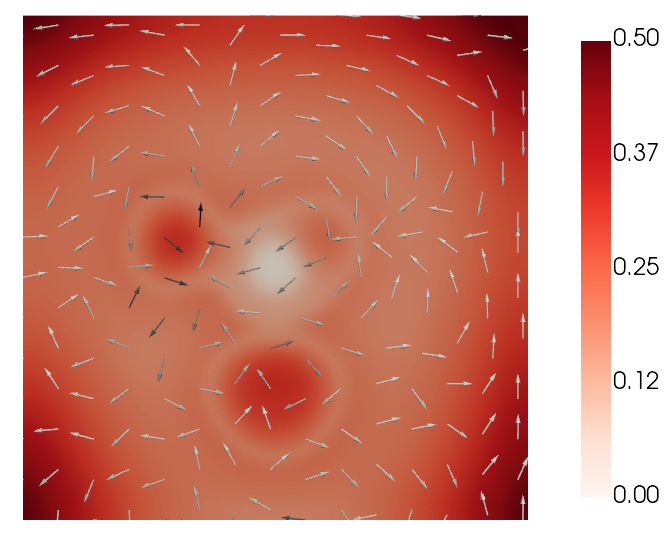}} &
		\raisebox{-0.47\height}{\includegraphics[scale=0.204]{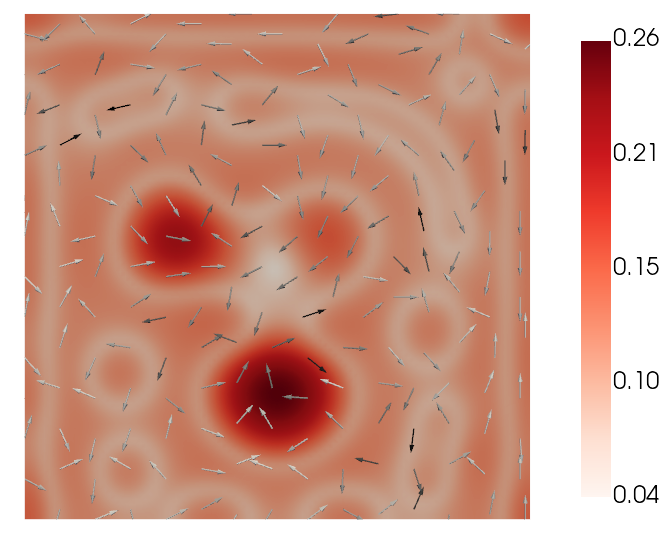}} \\
		&\rotatebox[origin=c]{90}{$K=10$} &
		\raisebox{-0.47\height}{\includegraphics[scale=0.204]{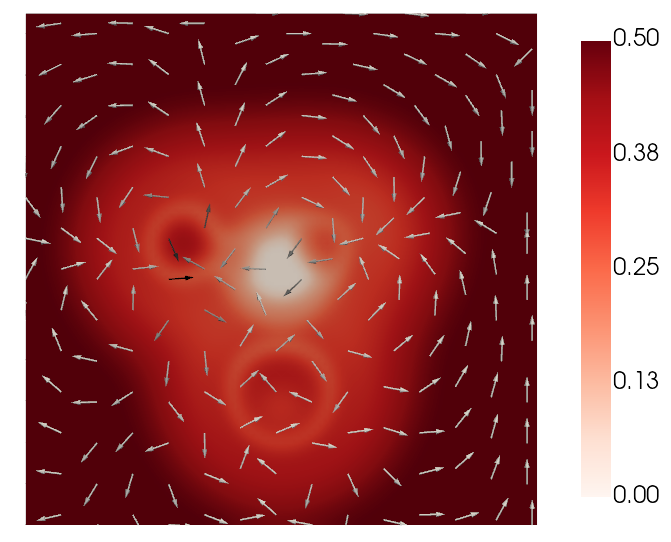}} &
		\raisebox{-0.47\height}{\includegraphics[scale=0.204]{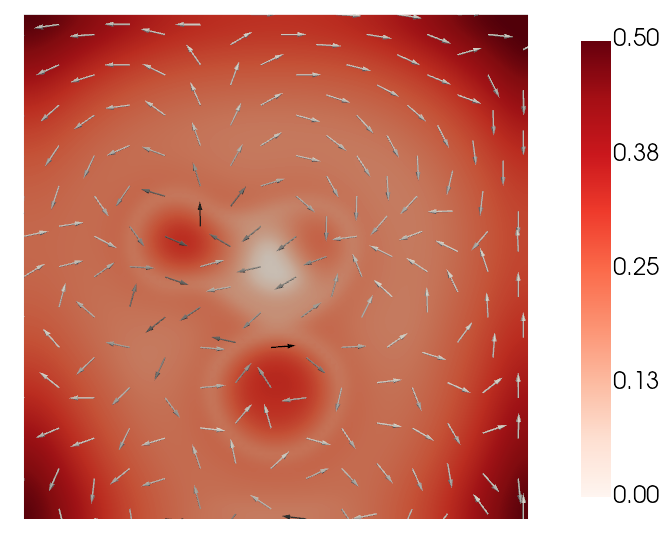}} &
		\raisebox{-0.47\height}{\includegraphics[scale=0.204]{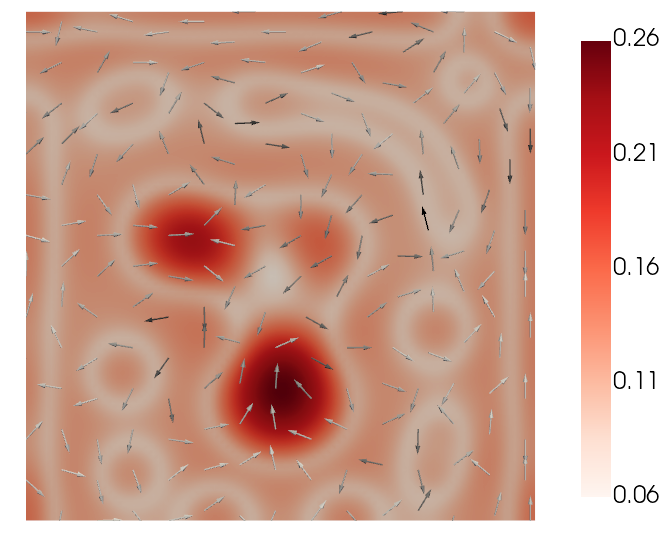}}
	\end{tabular}
	\caption{Tumor and nutrients with symmetric functions ($P_0=0.5$, $\chi_0=0.1$, $\Delta t=0.1$) at different time steps.}
	\label{fig:test-2_reference_symmetric}
\end{figure}

\begin{figure}
	\vspace*{-0.5cm}
	\centering
	\begin{tabular}{ccccc}
		& & \hspace*{-1cm}$t=10.0$ & \hspace*{-1cm}$t=20.0$ & \hspace*{-1cm}$t=50.0$ \\
		\multirow{3}{*}{\vspace*{-6.2cm}\rotatebox[origin=c]{90}{$\boldsymbol{u}$}}&\rotatebox[origin=c]{90}{$K=0.1$} &
		\raisebox{-0.47\height}{\includegraphics[scale=0.204]{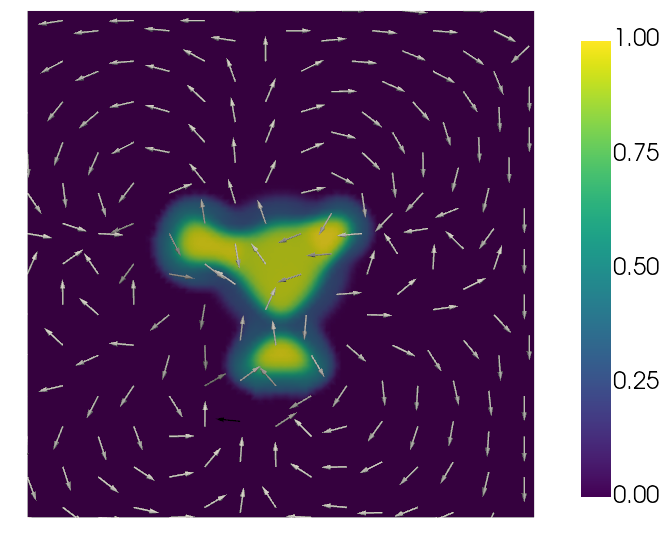}} &
		\raisebox{-0.47\height}{\includegraphics[scale=0.204]{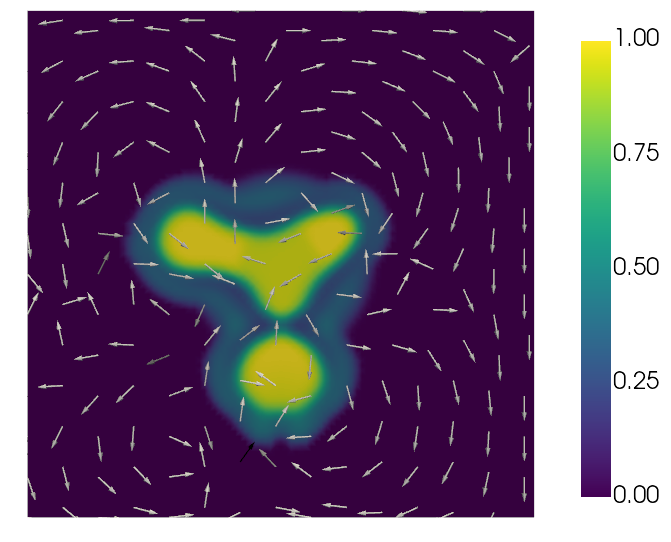}} &
		\raisebox{-0.47\height}{\includegraphics[scale=0.204]{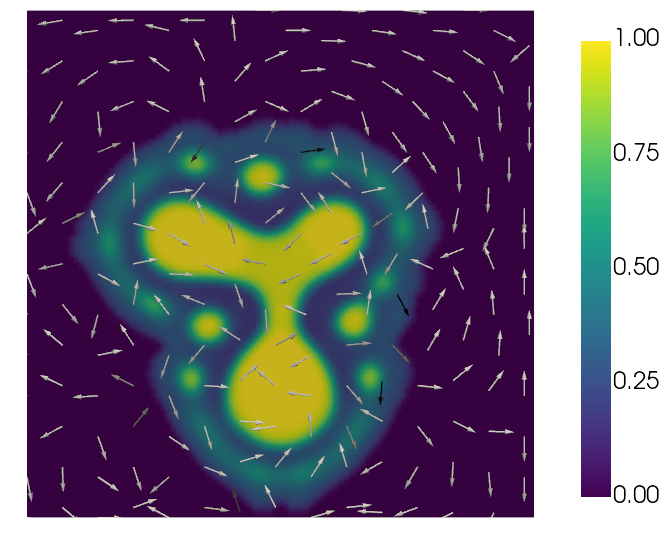}} \\
		&\rotatebox[origin=c]{90}{$K=1$} &
		\raisebox{-0.47\height}{\includegraphics[scale=0.204]{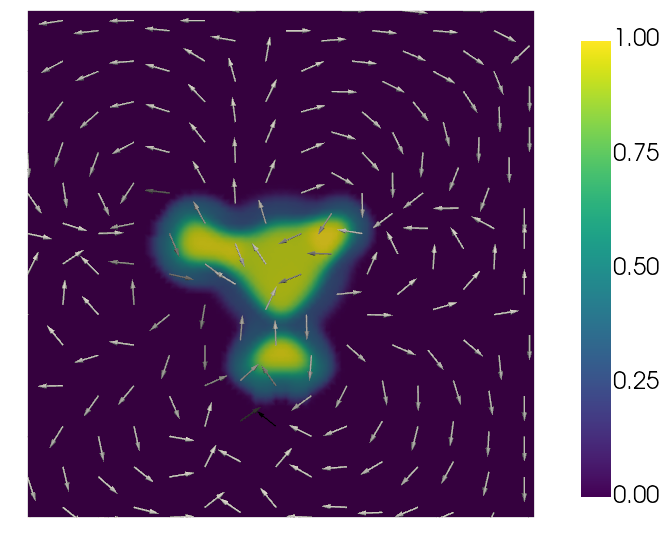}} &
		\raisebox{-0.47\height}{\includegraphics[scale=0.204]{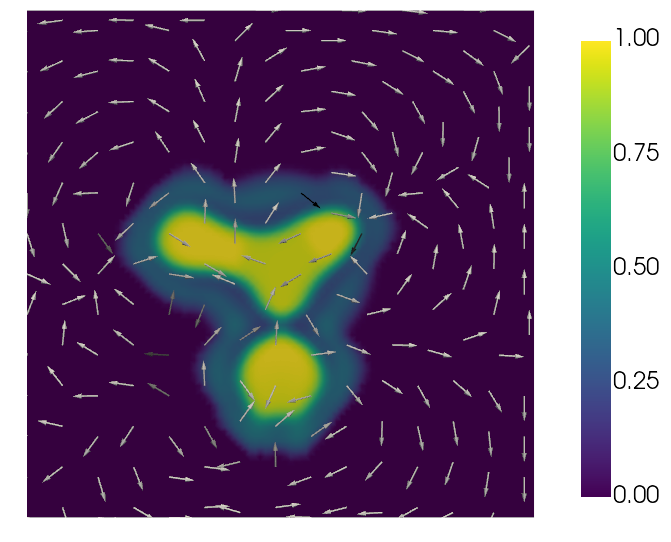}} &
		\raisebox{-0.47\height}{\includegraphics[scale=0.204]{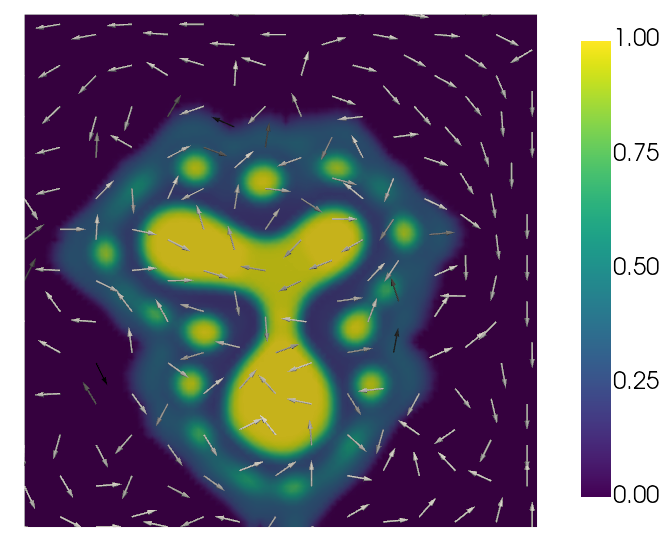}}\\
		&\rotatebox[origin=c]{90}{$K=10$} &
		\raisebox{-0.47\height}{\includegraphics[scale=0.204]{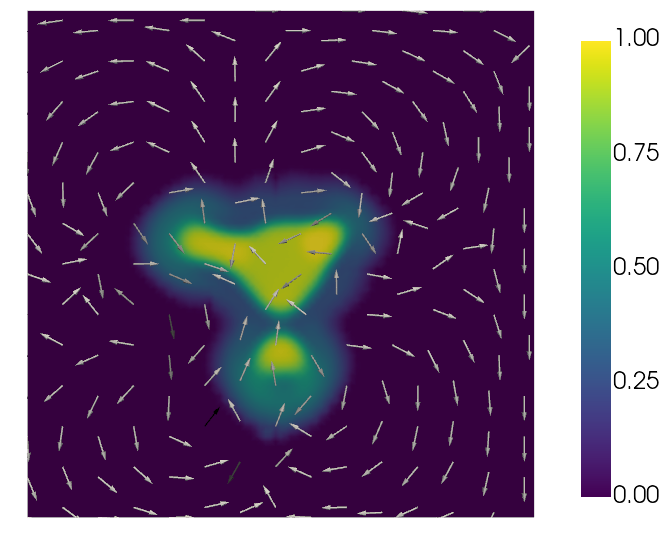}} &
		\raisebox{-0.47\height}{\includegraphics[scale=0.204]{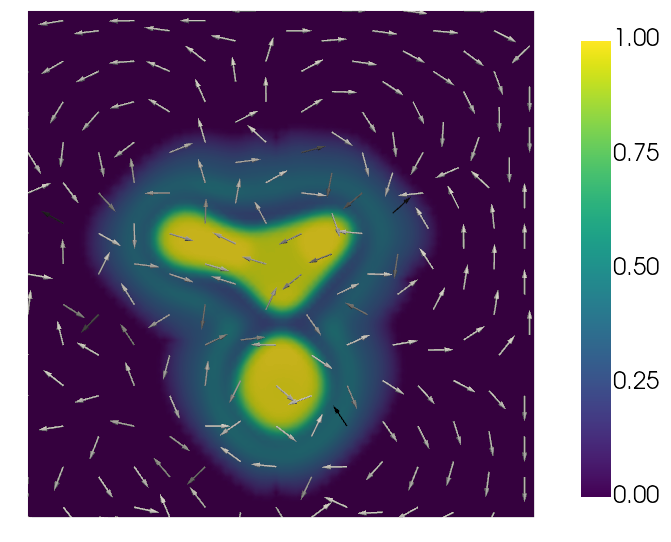}} &
		\raisebox{-0.47\height}{\includegraphics[scale=0.204]{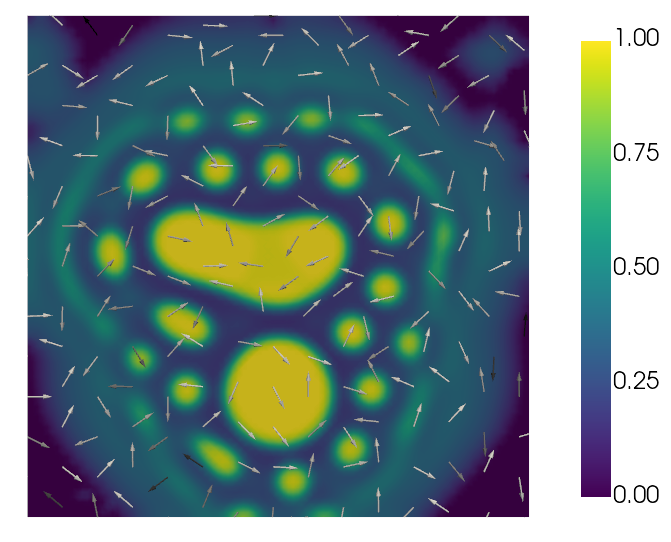}}\\
		\hdashline\vspace*{-0.5cm}\\
		\multirow{3}{*}{\vspace*{-6.2cm}\rotatebox[origin=c]{90}{$\boldsymbol{n}$}}&\rotatebox[origin=c]{90}{$K=0.1$} &
		\raisebox{-0.47\height}{\includegraphics[scale=0.204]{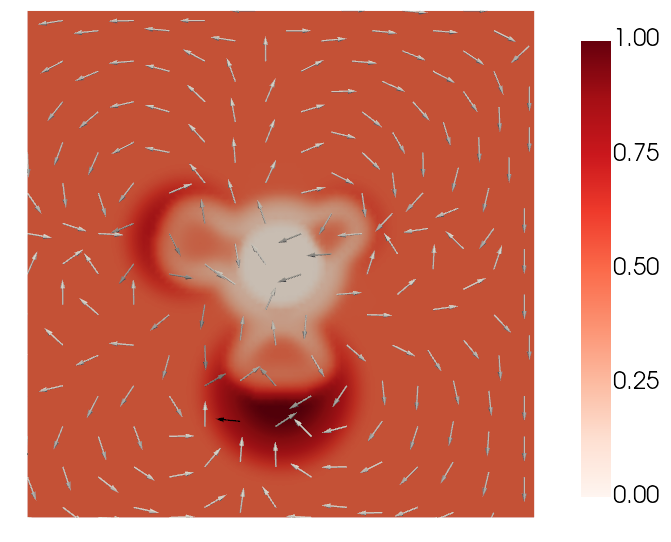}} &
		\raisebox{-0.47\height}{\includegraphics[scale=0.204]{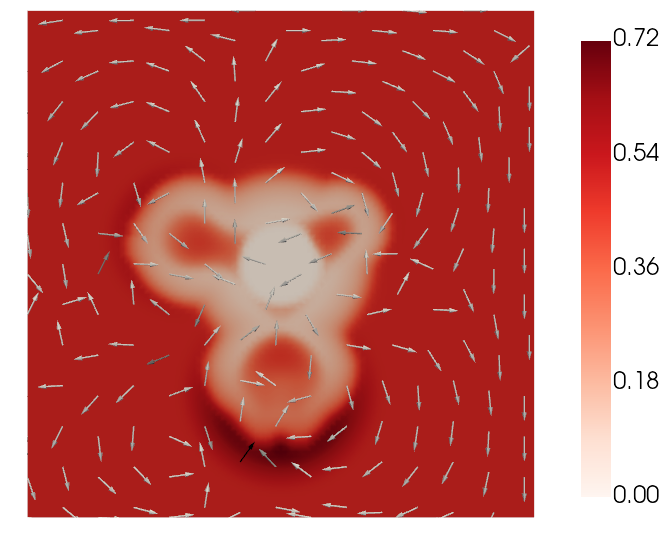}} &
		\raisebox{-0.47\height}{\includegraphics[scale=0.204]{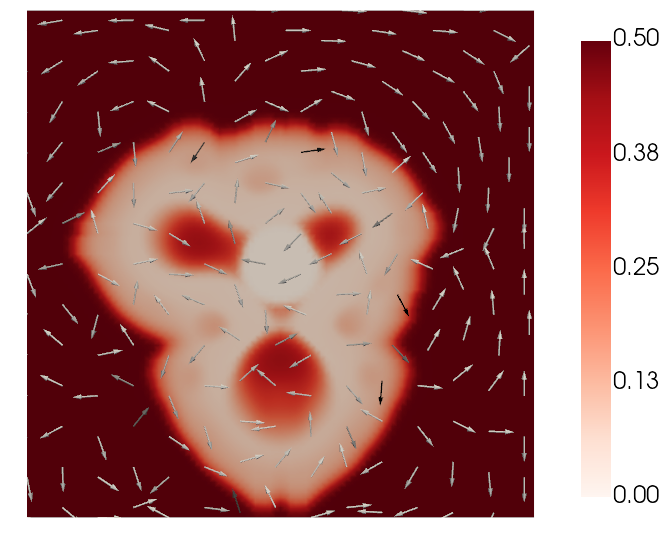}} \\
		&\rotatebox[origin=c]{90}{$K=1$} &
		\raisebox{-0.47\height}{\includegraphics[scale=0.204]{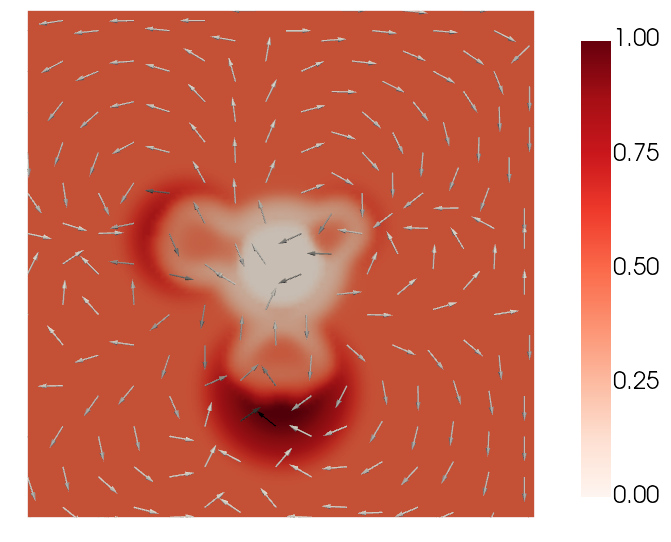}} &
		\raisebox{-0.47\height}{\includegraphics[scale=0.204]{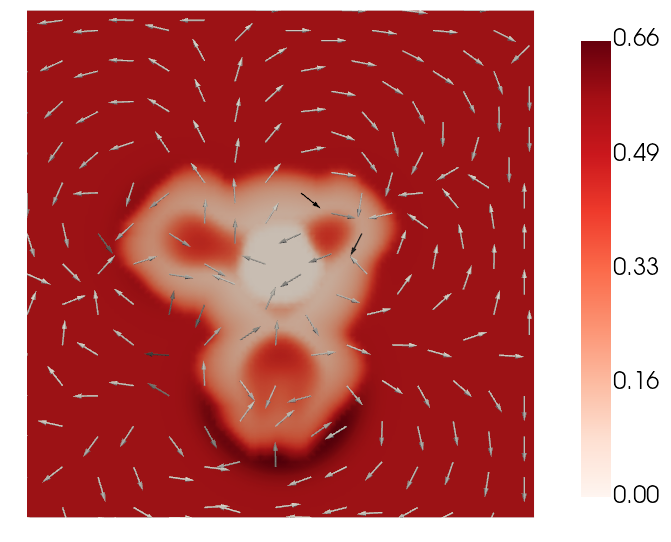}} &
		\raisebox{-0.47\height}{\includegraphics[scale=0.204]{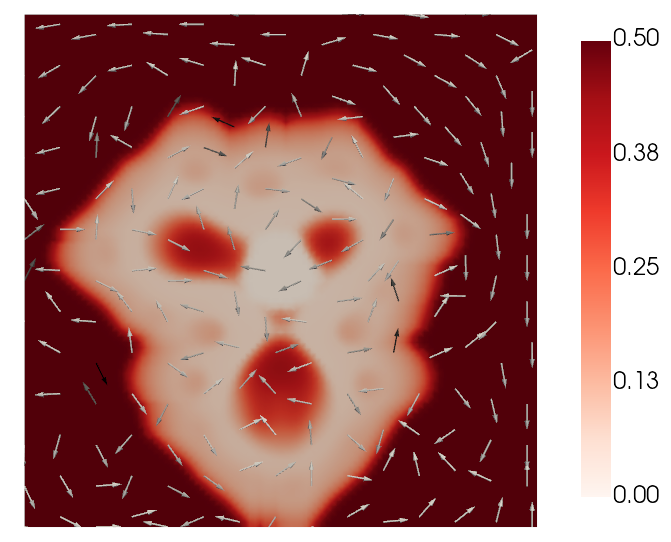}} \\
		&\rotatebox[origin=c]{90}{$K=10$} &
		\raisebox{-0.47\height}{\includegraphics[scale=0.204]{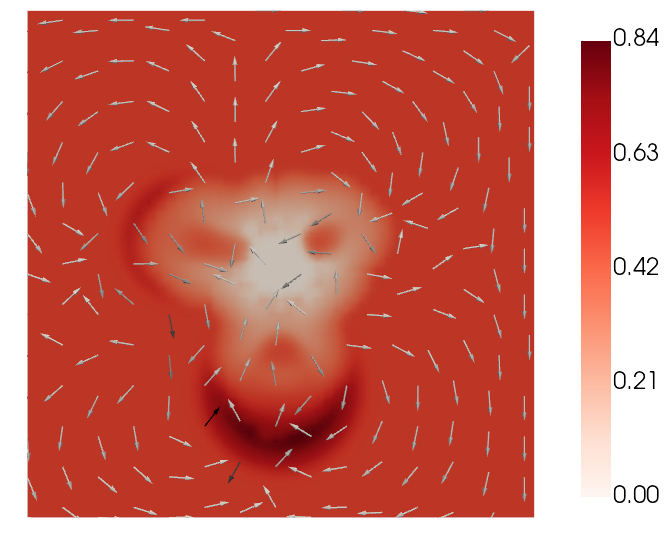}} &
		\raisebox{-0.47\height}{\includegraphics[scale=0.204]{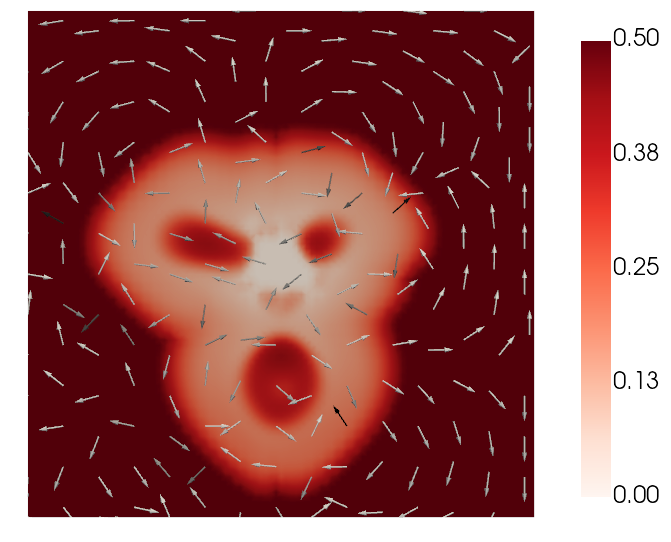}} &
		\raisebox{-0.47\height}{\includegraphics[scale=0.204]{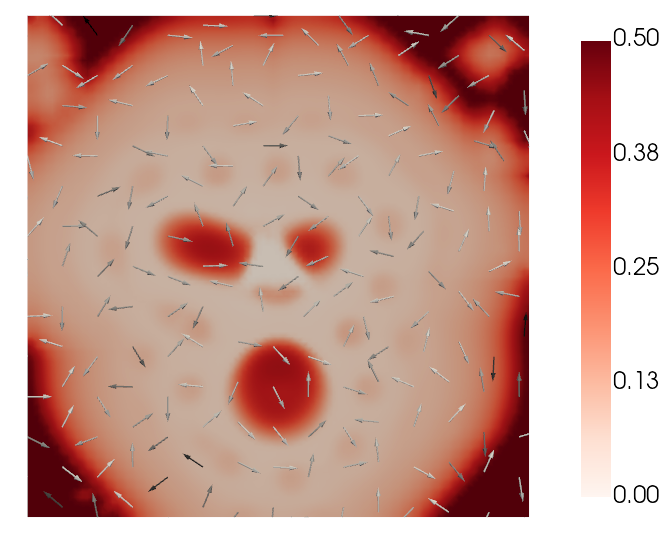}}
	\end{tabular}
	\caption{Tumor and nutrients with non-symmetric functions ($P_0=0.5$, $\chi_0=0.1$, $\Delta t=0.1$) at different time steps.}
	\label{fig:test-2_reference}
\end{figure}

\begin{figure}
	\vspace*{-0.5cm}
	\centering
	\begin{tabular}{ccc}
		& Symmetric \eqref{symmetric_functions} & Non-symmetric \eqref{nonsymmetric_functions} \\
		\rotatebox[origin=c]{90}{$\boldsymbol{u}$} &
		\hspace*{-0.4cm}\raisebox{-0.5\height}{\includegraphics[scale=0.52]{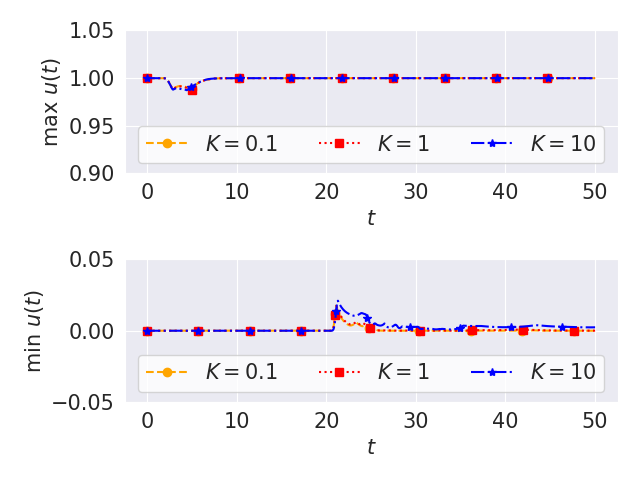}} &
		\hspace*{-0.4cm}\raisebox{-0.5\height}{\includegraphics[scale=0.52]{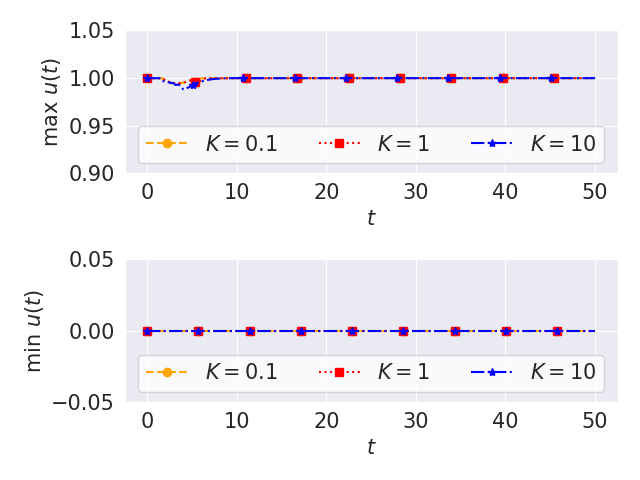}} \\
		\rotatebox[origin=c]{90}{$\boldsymbol{n}$} & \hspace*{-0.4cm}\raisebox{-0.5\height}{\includegraphics[scale=0.52]{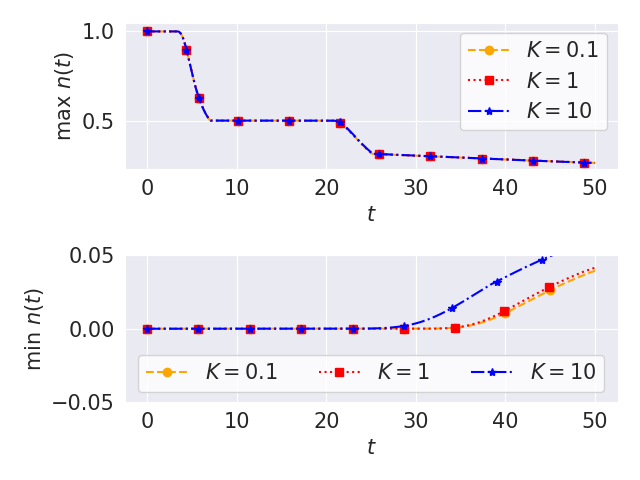}} &
		\hspace*{-0.4cm}\raisebox{-0.5\height}{\includegraphics[scale=0.52]{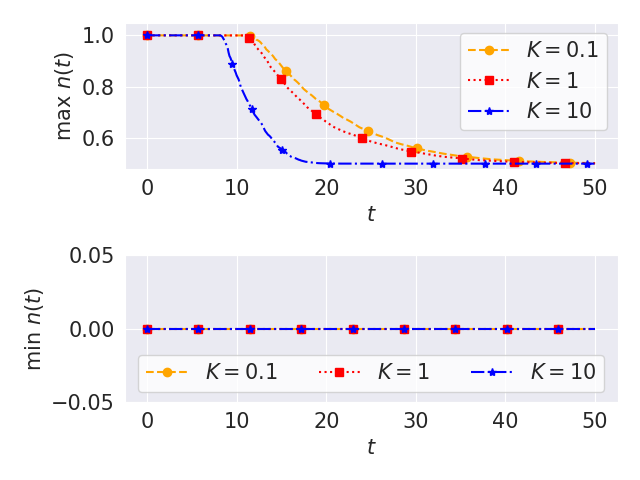}} \\
		\rotatebox[origin=c]{90}{\textbf{Energy}} & \raisebox{-0.5\height}{\includegraphics[scale=0.49]{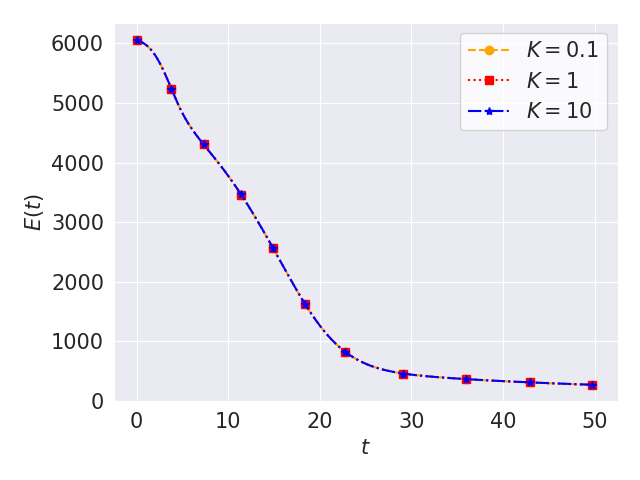}} &
		\raisebox{-0.47\height}{\includegraphics[scale=0.49]{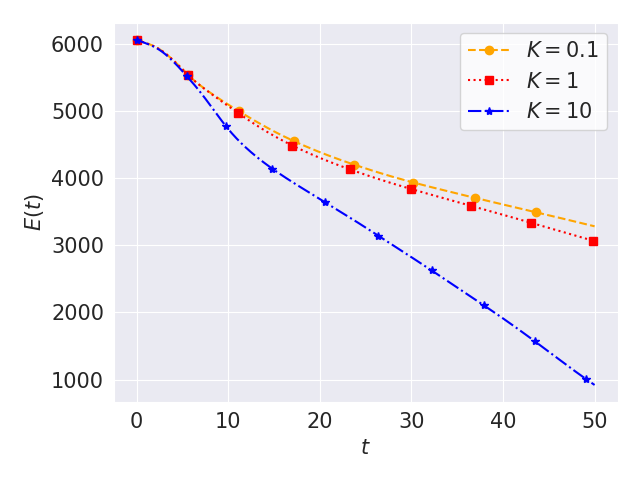}}
	\end{tabular}
	\caption{Minimum and maximum values of $u$ (first row), $n$ (second row), and energy (third row) ($P_0=0.5$, $\chi_0=0.1$, $\Delta t=0.1$) over time for different values of $K$.}
	\label{fig:test-2_min_max_energy}
\end{figure}

\begin{figure}
	\centering
	\begin{tabular}{ccccc}
			&& \hspace*{-1cm}$t=30.0$ & \hspace*{-1cm}$t=50.0$ & \hspace*{-1cm}$t=100.0$ \\
			\multirow{3}{*}{\vspace*{-6.2cm}\rotatebox[origin=c]{90}{\textbf{Symmetric \eqref{symmetric_functions}}}} & \rotatebox[origin=c]{90}{$K=0.1$} &
			\raisebox{-0.47\height}{\includegraphics[scale=0.204]{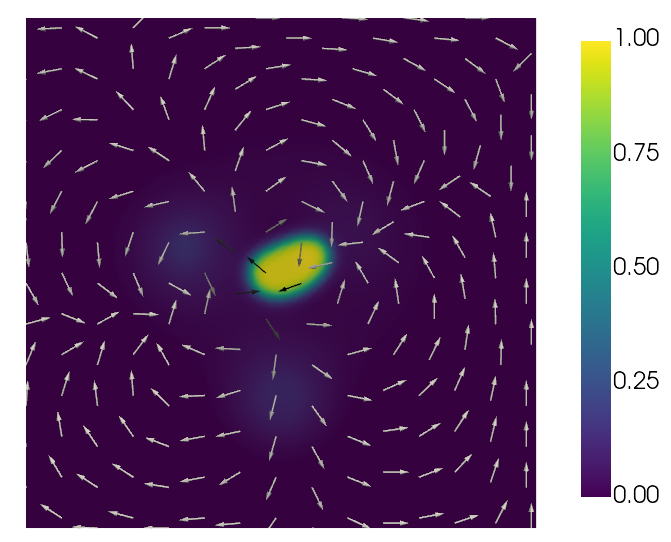}} &
			\raisebox{-0.47\height}{\includegraphics[scale=0.204]{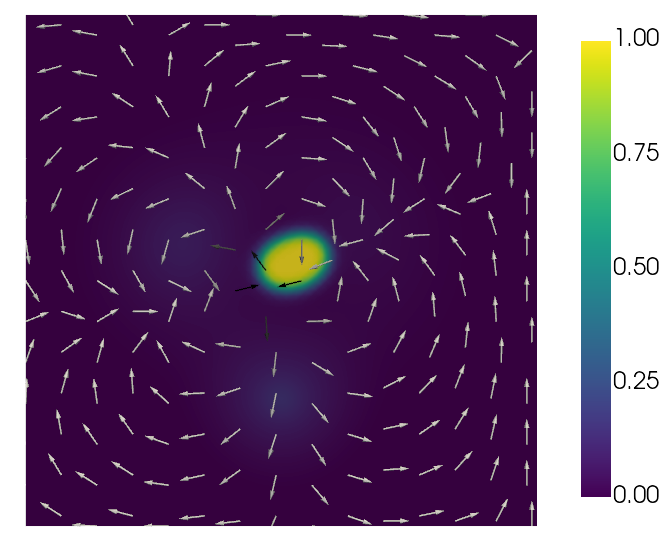}} &
			\raisebox{-0.47\height}{\includegraphics[scale=0.204]{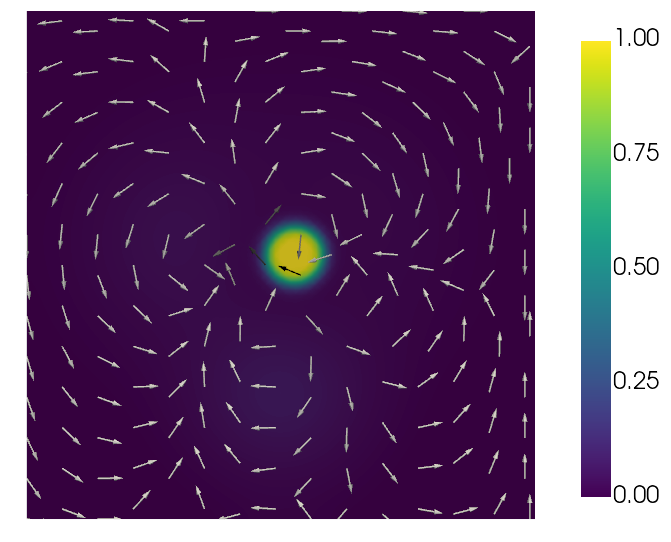}} \\
			& \rotatebox[origin=c]{90}{$K=1$} &
			\raisebox{-0.47\height}{\includegraphics[scale=0.204]{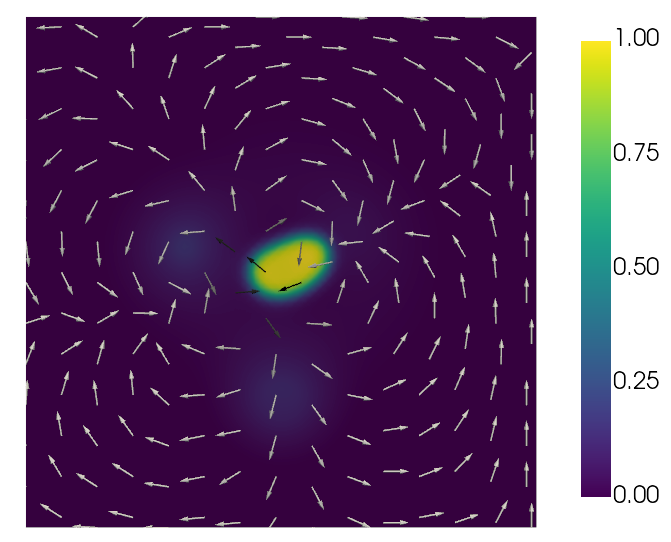}} &
			\raisebox{-0.47\height}{\includegraphics[scale=0.204]{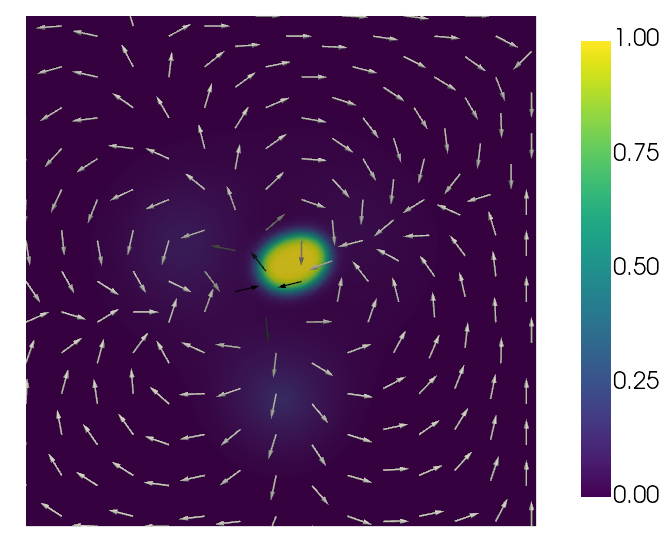}} &
			\raisebox{-0.47\height}{\includegraphics[scale=0.204]{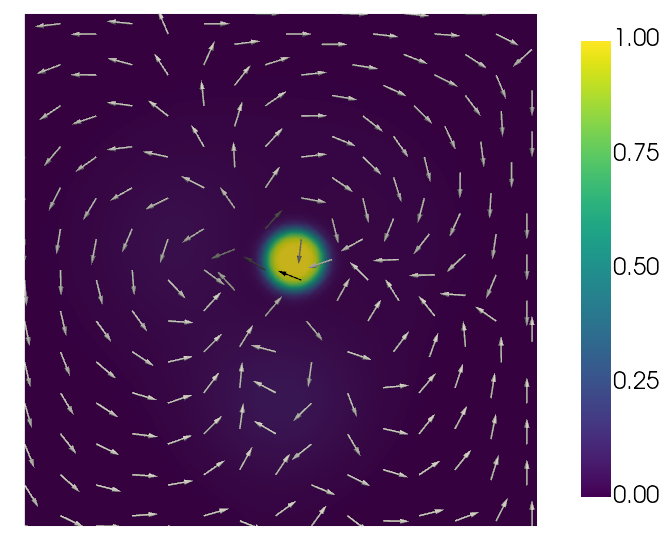}} \\
			& \rotatebox[origin=c]{90}{$K=10$} &
			\raisebox{-0.47\height}{\includegraphics[scale=0.204]{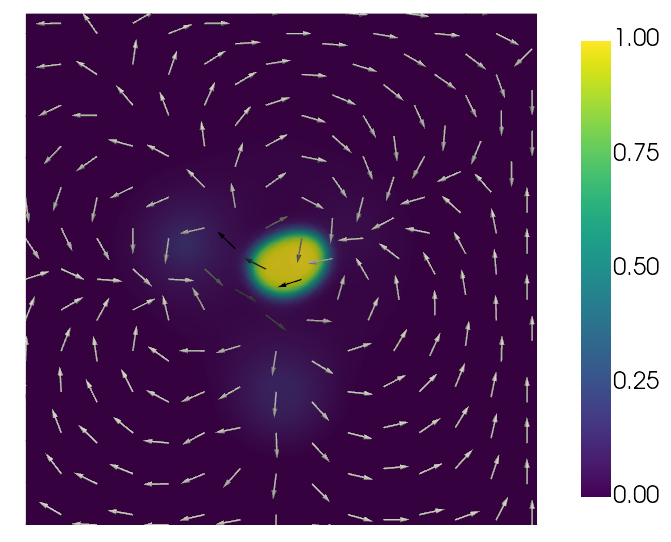}} &
			\raisebox{-0.47\height}{\includegraphics[scale=0.204]{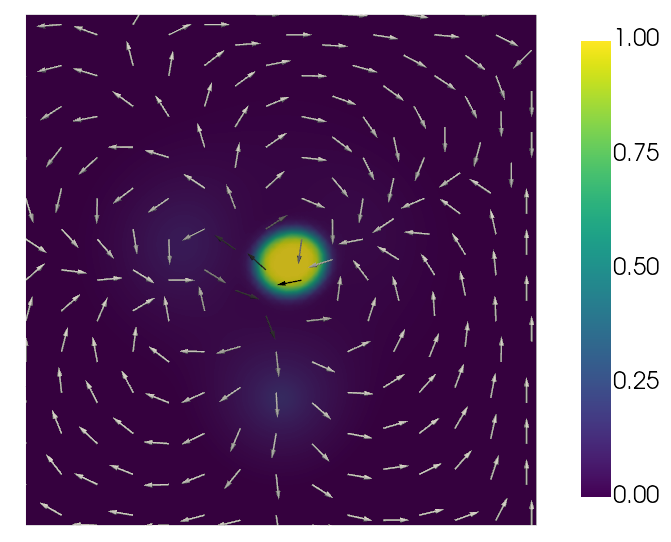}} &
			\raisebox{-0.47\height}{\includegraphics[scale=0.204]{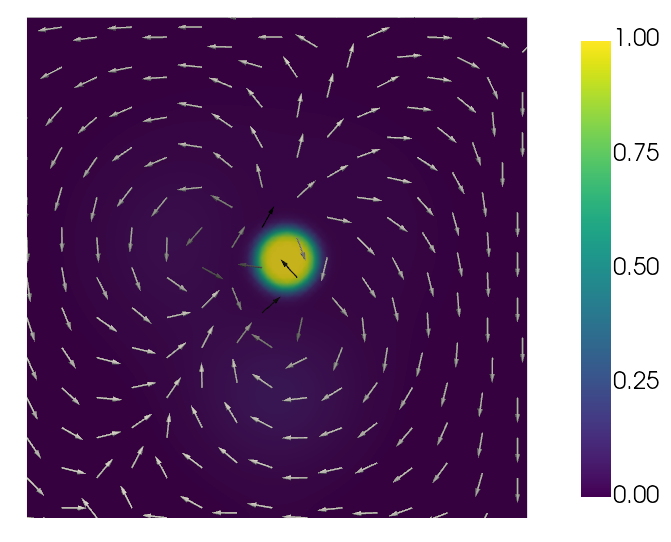}} \\
			\hdashline\vspace*{-0.5cm}\\
			\multirow{3}{*}{\vspace*{-6.2cm}\rotatebox[origin=c]{90}{\textbf{Non-symmetric \eqref{nonsymmetric_functions}}}} & \rotatebox[origin=c]{90}{$K=0.1$} &
			\raisebox{-0.47\height}{\includegraphics[scale=0.204]{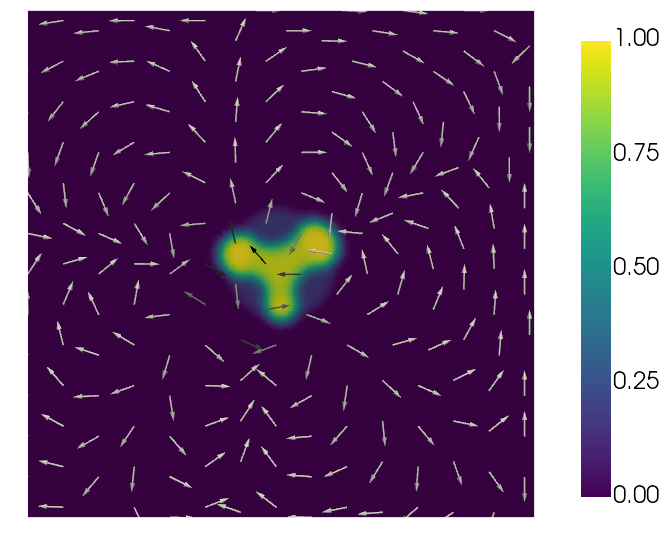}} &
			\raisebox{-0.47\height}{\includegraphics[scale=0.204]{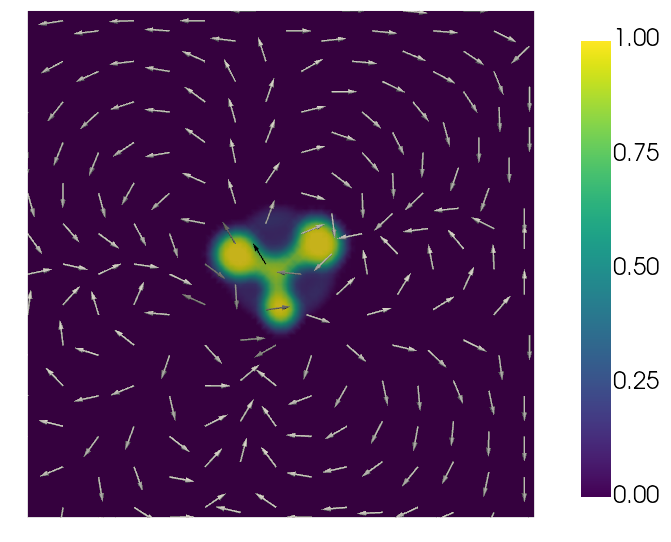}} &
			\raisebox{-0.47\height}{\includegraphics[scale=0.204]{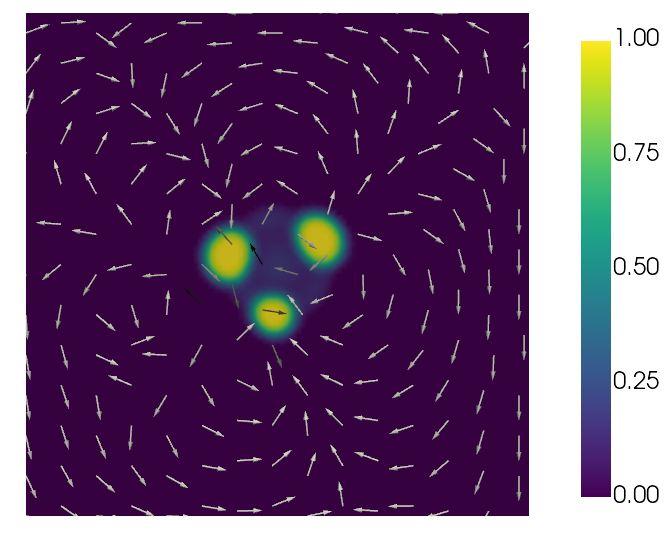}} \\
			& \rotatebox[origin=c]{90}{$K=1$} &
			\raisebox{-0.47\height}{\includegraphics[scale=0.204]{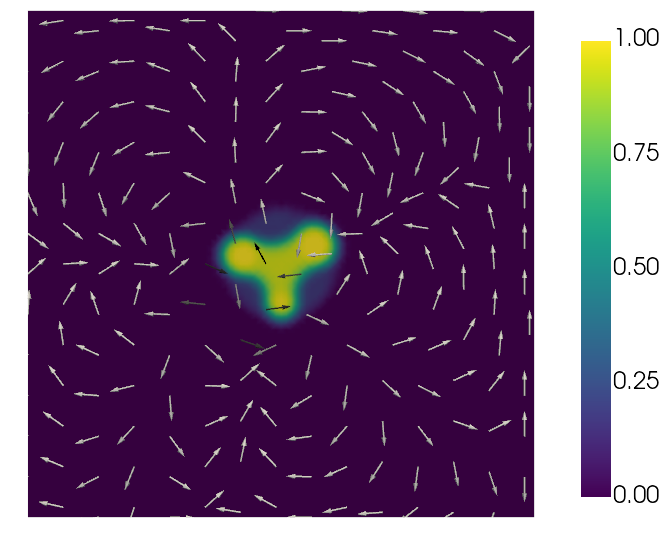}} &
			\raisebox{-0.47\height}{\includegraphics[scale=0.204]{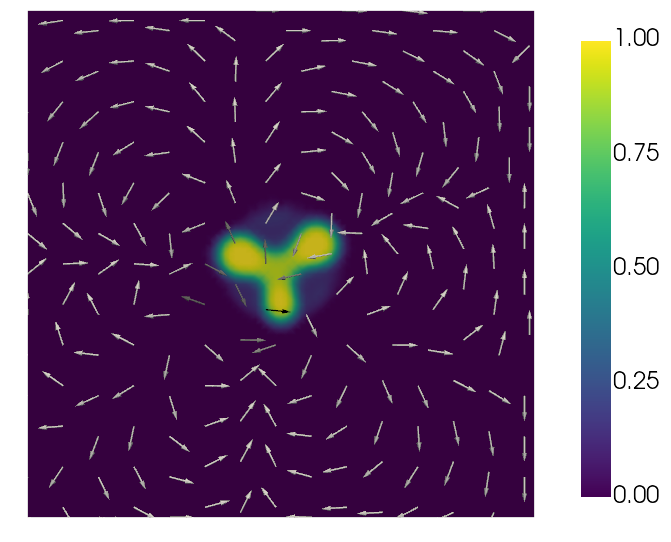}} &
			\raisebox{-0.47\height}{\includegraphics[scale=0.204]{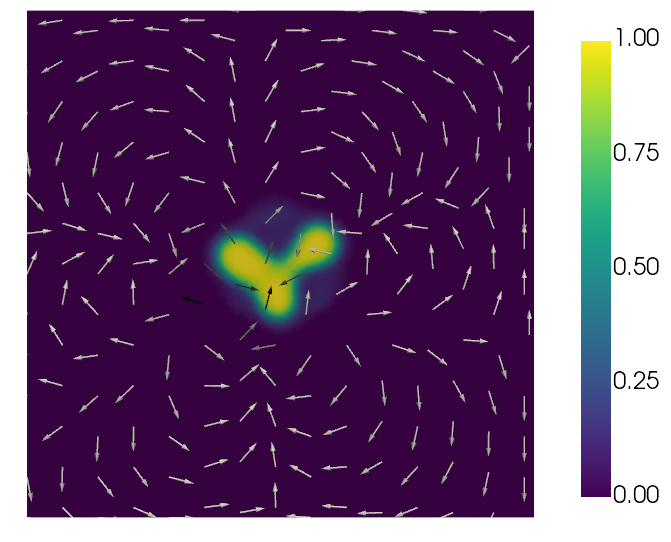}} \\
			& \rotatebox[origin=c]{90}{$K=10$} &
			\raisebox{-0.47\height}{\includegraphics[scale=0.204]{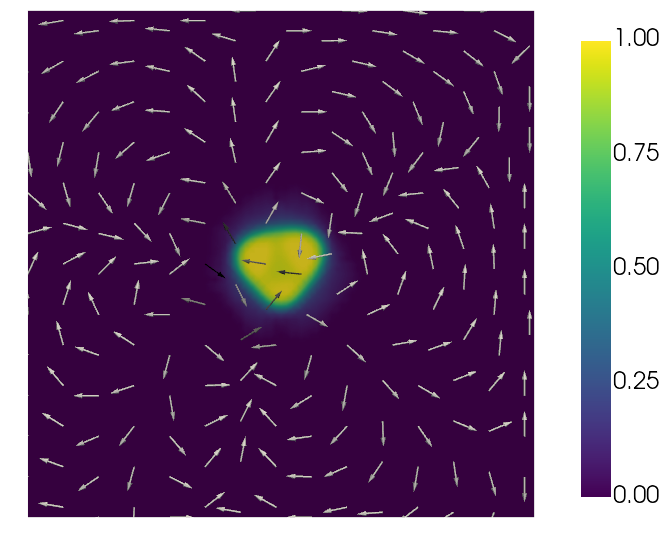}} &
			\raisebox{-0.47\height}{\includegraphics[scale=0.204]{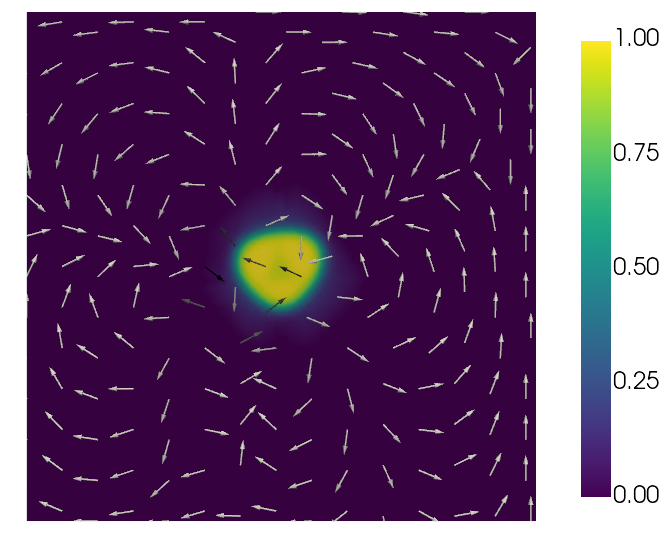}} &
			\raisebox{-0.47\height}{\includegraphics[scale=0.204]{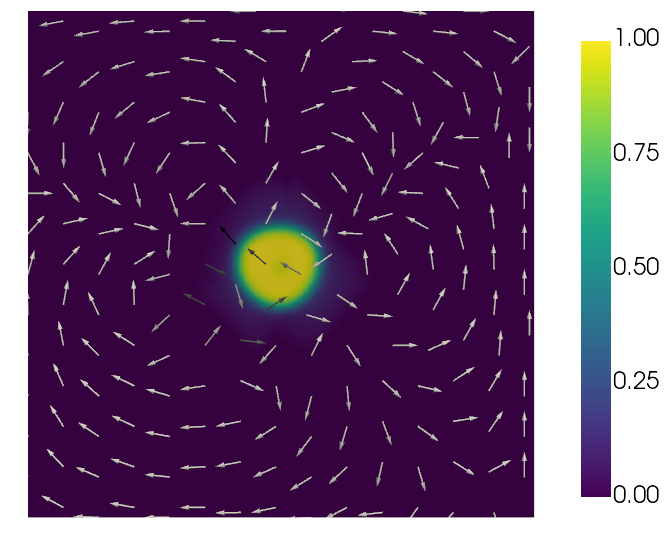}}
	\end{tabular}
	\caption{Tumor for test with $P_0=0.001$, $\chi_0=0.1$, $\Delta t=0.1$ at different time steps.}
	\label{fig:test-2_P0-0.001_u}
\end{figure}

\begin{figure}
	\centering
	\begin{tabular}{ccccc}
			&& \hspace*{-1cm}$t=50.0$ & \hspace*{-1cm}$t=80.0$ & \hspace*{-1cm}$t=200.0$ \\
			\multirow{3}{*}{\vspace*{-6.2cm}\rotatebox[origin=c]{90}{\textbf{Symmetric \eqref{symmetric_functions}}}} & \rotatebox[origin=c]{90}{$K=0.1$} &
			\raisebox{-0.47\height}{\includegraphics[scale=0.204]{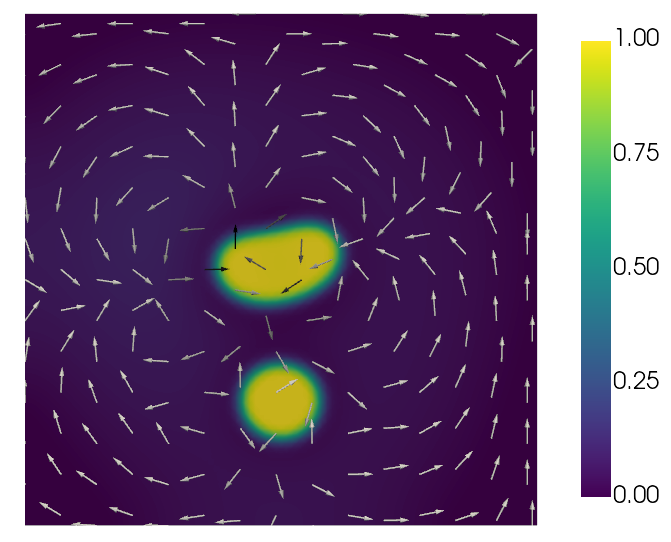}} &
			\raisebox{-0.47\height}{\includegraphics[scale=0.204]{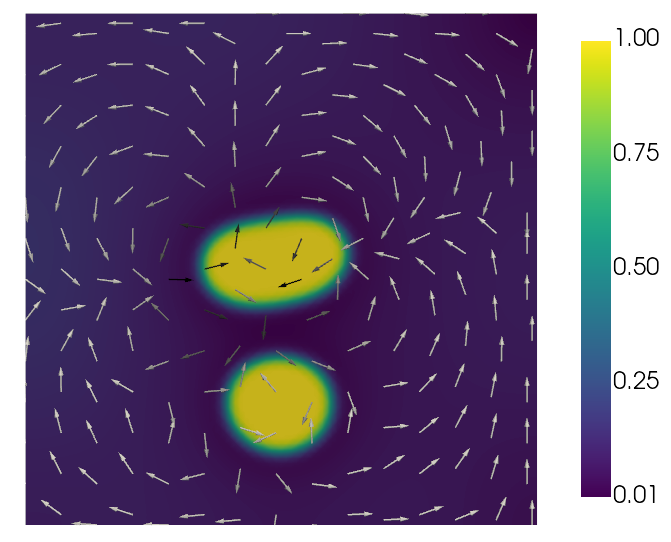}} &
			\raisebox{-0.47\height}{\includegraphics[scale=0.204]{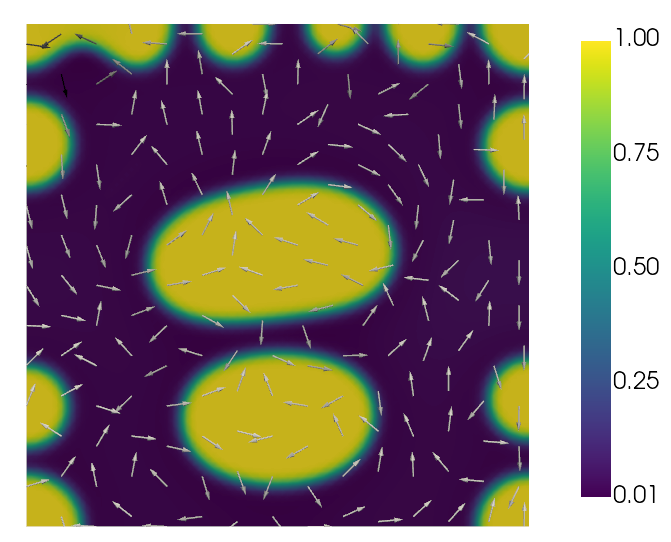}} \\
			& \rotatebox[origin=c]{90}{$K=1$} &
			\raisebox{-0.47\height}{\includegraphics[scale=0.204]{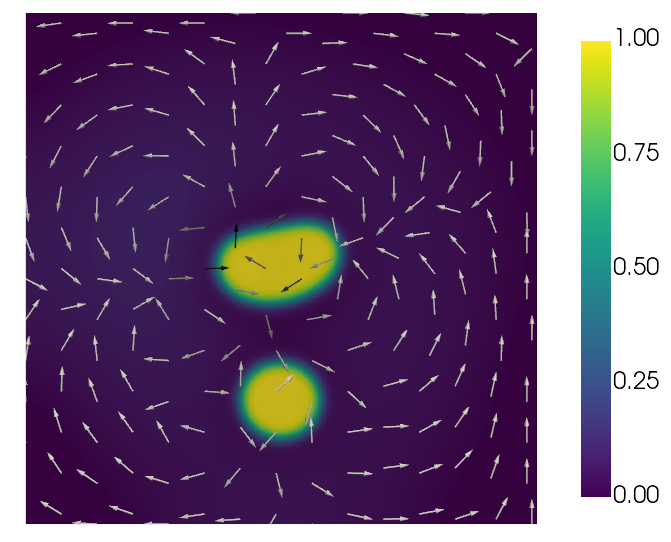}} &
			\raisebox{-0.47\height}{\includegraphics[scale=0.204]{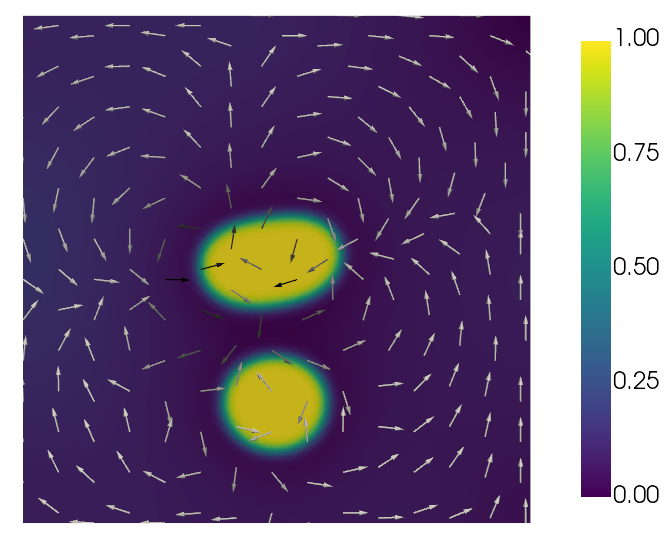}} &
			\raisebox{-0.47\height}{\includegraphics[scale=0.204]{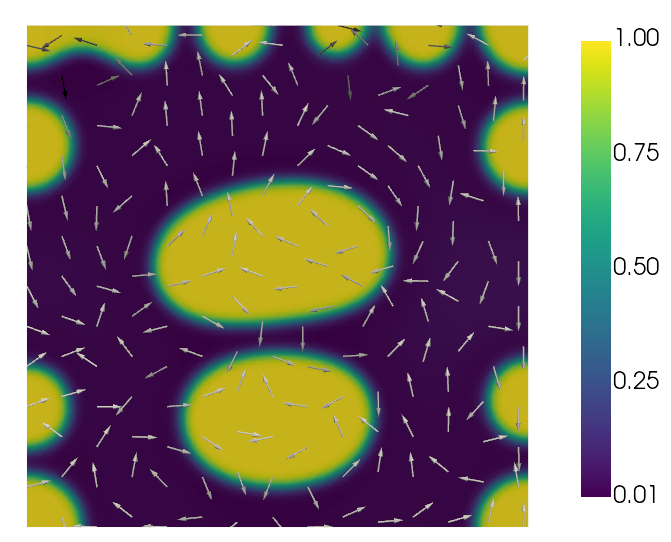}} \\
			& \rotatebox[origin=c]{90}{$K=10$} &
			\raisebox{-0.47\height}{\includegraphics[scale=0.204]{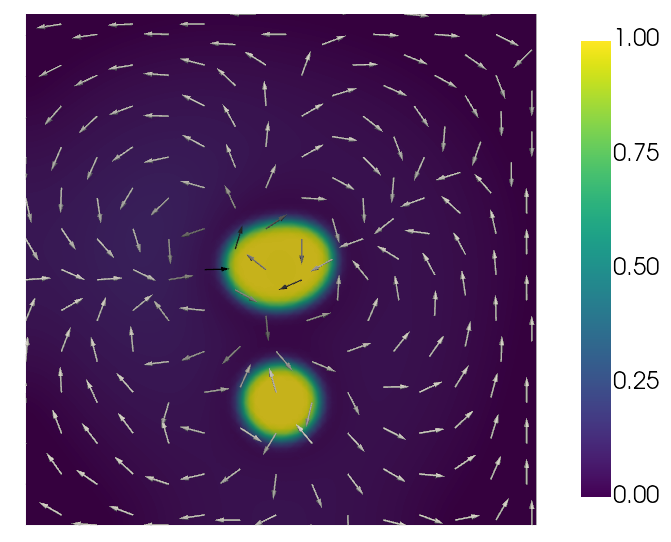}} &
			\raisebox{-0.47\height}{\includegraphics[scale=0.204]{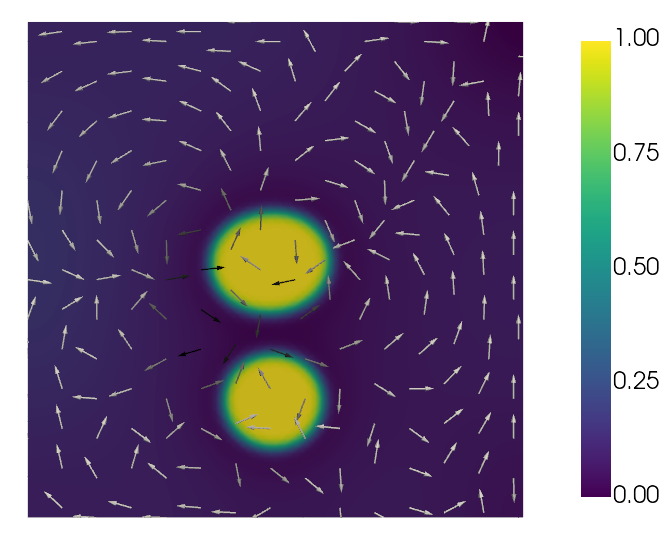}} &
			\raisebox{-0.47\height}{\includegraphics[scale=0.204]{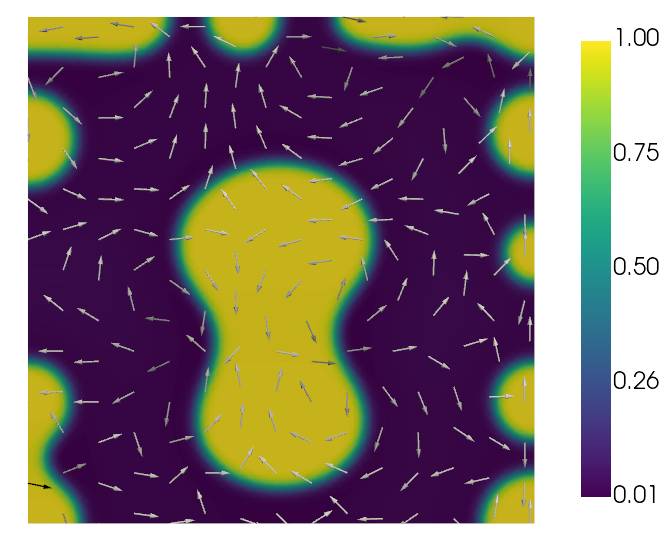}} \\
			\hdashline\vspace*{-0.5cm}\\
			\multirow{3}{*}{\vspace*{-6.2cm}\rotatebox[origin=c]{90}{\textbf{Non-symmetric \eqref{nonsymmetric_functions}}}} & \rotatebox[origin=c]{90}{$K=0.1$} &
			\raisebox{-0.47\height}{\includegraphics[scale=0.204]{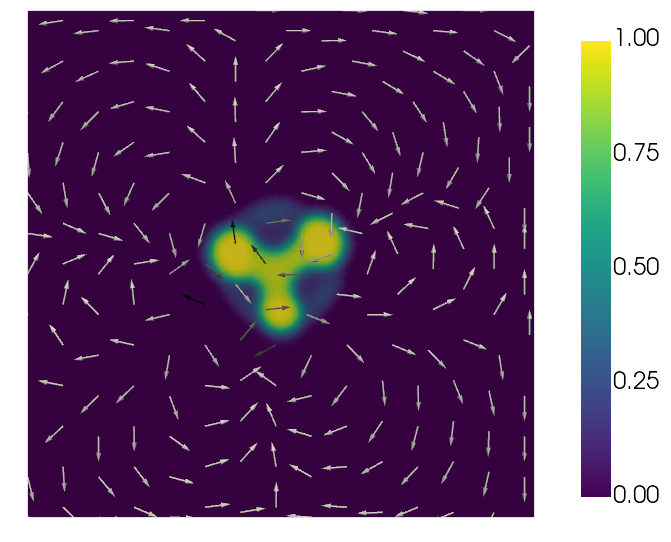}} &
			\raisebox{-0.47\height}{\includegraphics[scale=0.204]{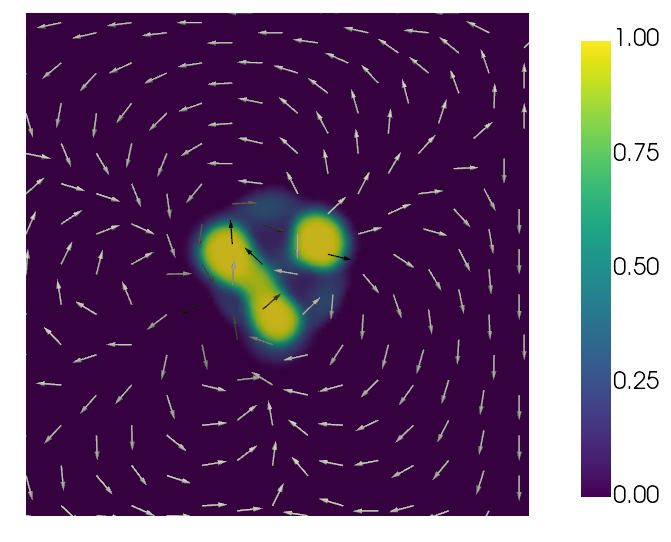}} &
			\raisebox{-0.47\height}{\includegraphics[scale=0.204]{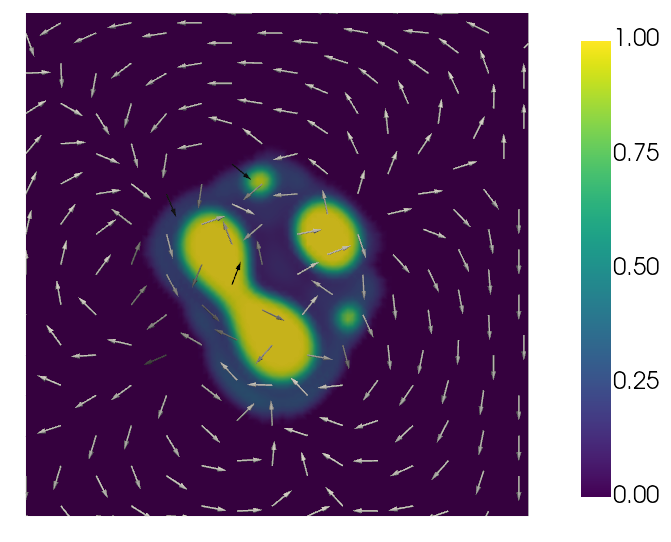}} \\
			& \rotatebox[origin=c]{90}{$K=1$} &
			\raisebox{-0.47\height}{\includegraphics[scale=0.204]{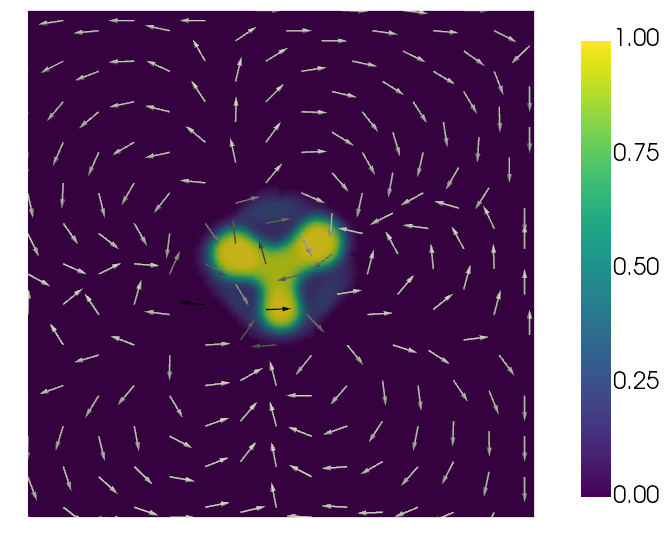}} &
			\raisebox{-0.47\height}{\includegraphics[scale=0.204]{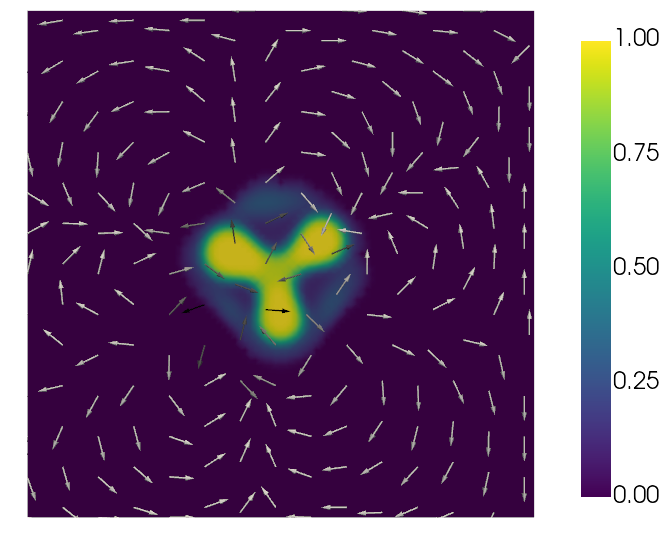}} &
			\raisebox{-0.47\height}{\includegraphics[scale=0.204]{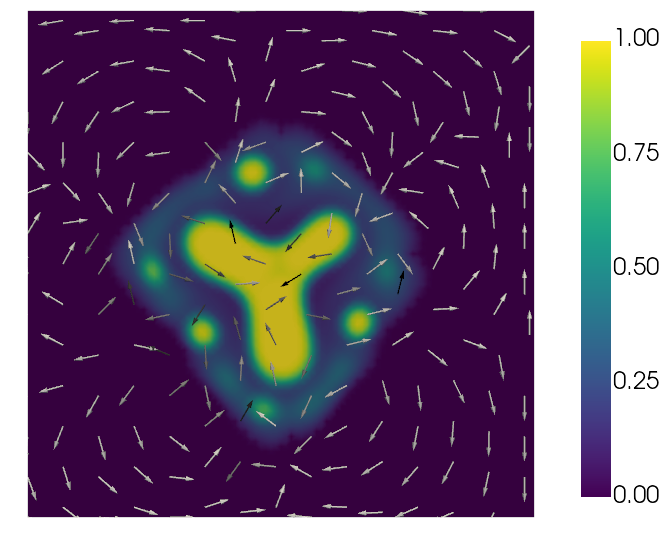}} \\
			& \rotatebox[origin=c]{90}{$K=10$} &
			\raisebox{-0.47\height}{\includegraphics[scale=0.204]{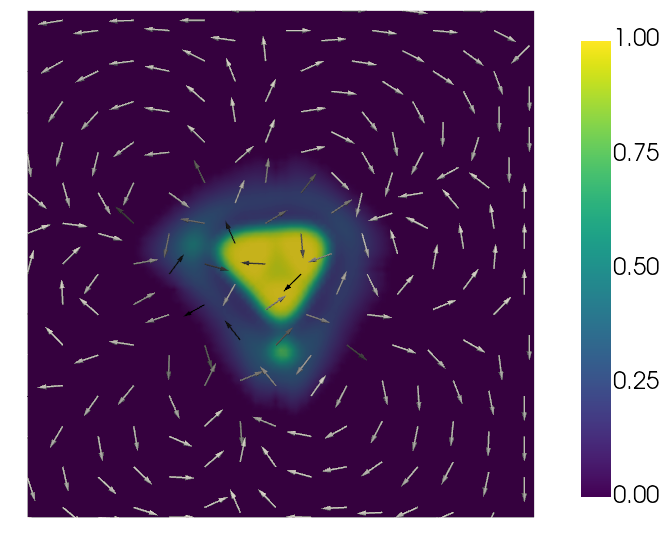}} &
			\raisebox{-0.47\height}{\includegraphics[scale=0.204]{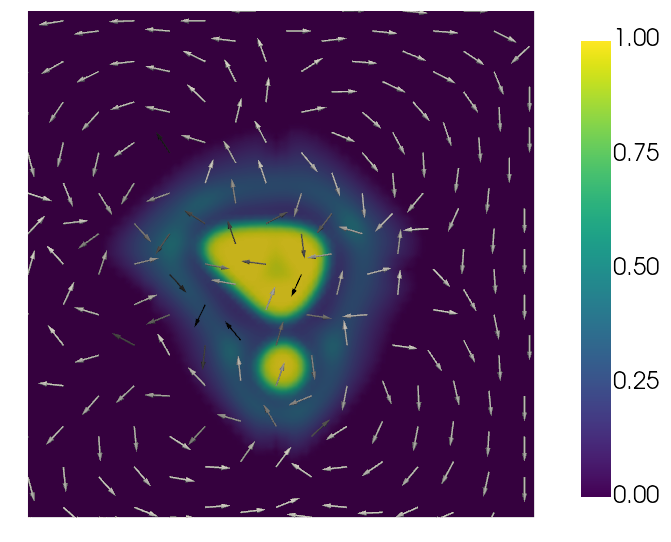}} &
			\raisebox{-0.47\height}{\includegraphics[scale=0.204]{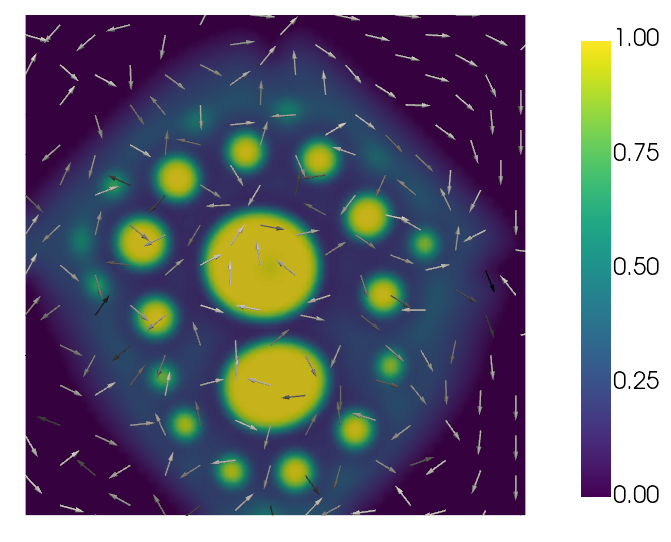}}
	\end{tabular}
	\caption{Tumor for test with $P_0=0.05$, $\chi_0=0.1$, $\Delta t=0.1$ at different time steps.}
	\label{fig:test-2_P0-0.05_u}
\end{figure}

\begin{figure}
	\centering
	\begin{tabular}{ccccc}
			&& \hspace*{-1cm}$t=1.25$ & \hspace*{-1cm}$t=6.25$ & \hspace*{-1cm}$t=12.5$ \\
			\multirow{3}{*}{\vspace*{-6.2cm}\rotatebox[origin=c]{90}{\textbf{Symmetric \eqref{symmetric_functions}}}} & \rotatebox[origin=c]{90}{$K=0.1$} &
			\raisebox{-0.47\height}{\includegraphics[scale=0.204]{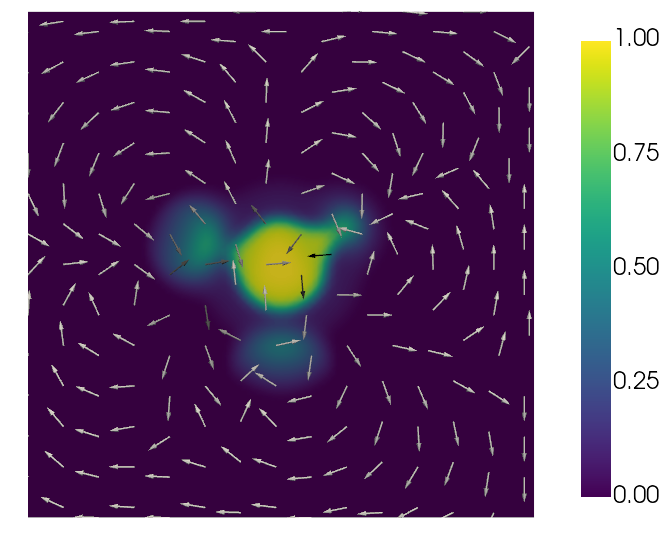}} &
			\raisebox{-0.47\height}{\includegraphics[scale=0.204]{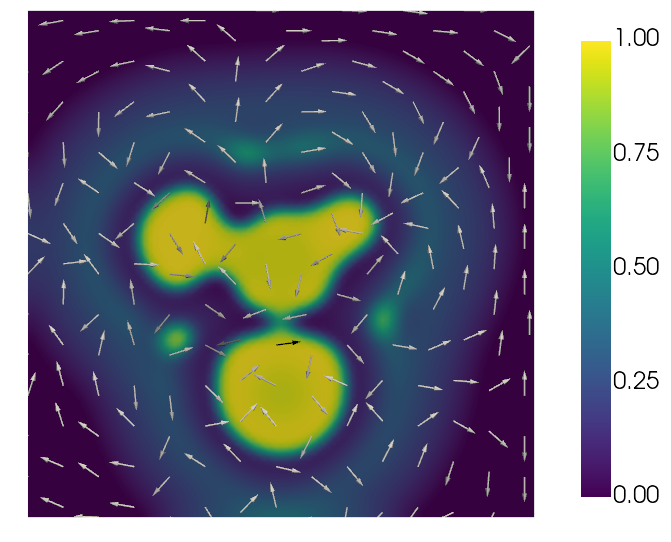}} &
			\raisebox{-0.47\height}{\includegraphics[scale=0.204]{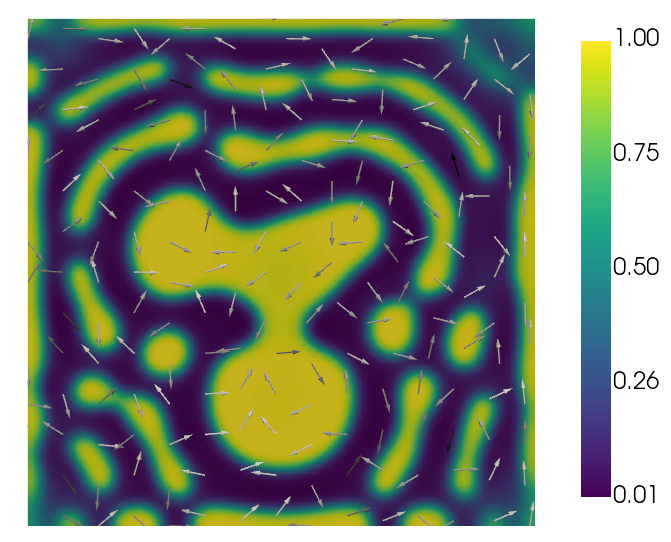}} \\
			& \rotatebox[origin=c]{90}{$K=1$} &
			\raisebox{-0.47\height}{\includegraphics[scale=0.204]{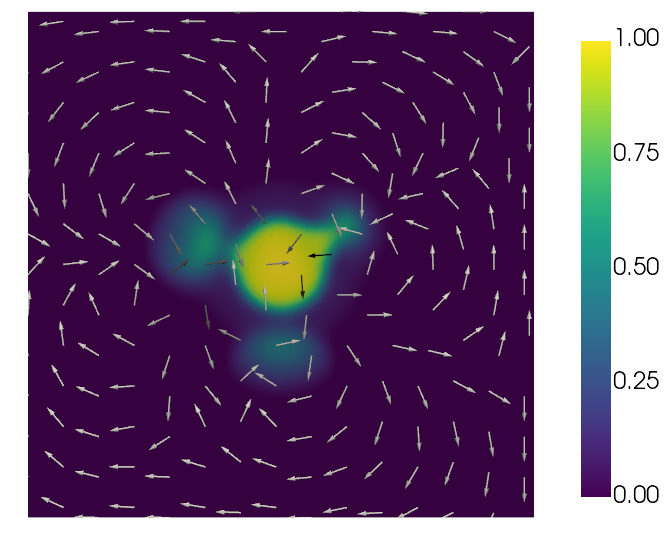}} &
			\raisebox{-0.47\height}{\includegraphics[scale=0.204]{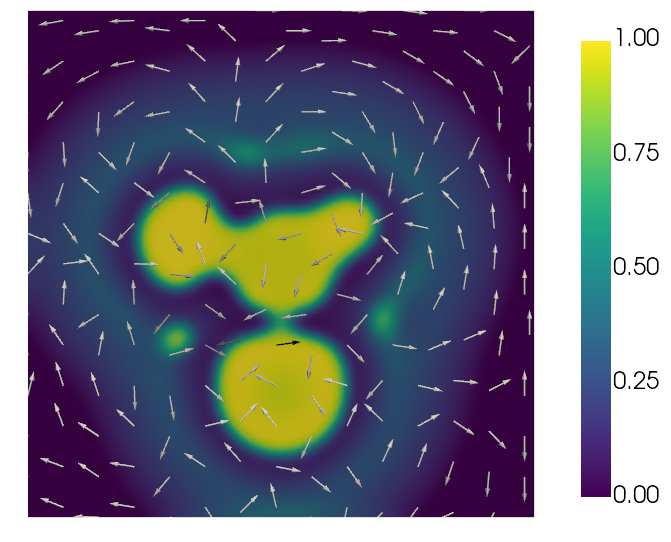}} &
			\raisebox{-0.47\height}{\includegraphics[scale=0.204]{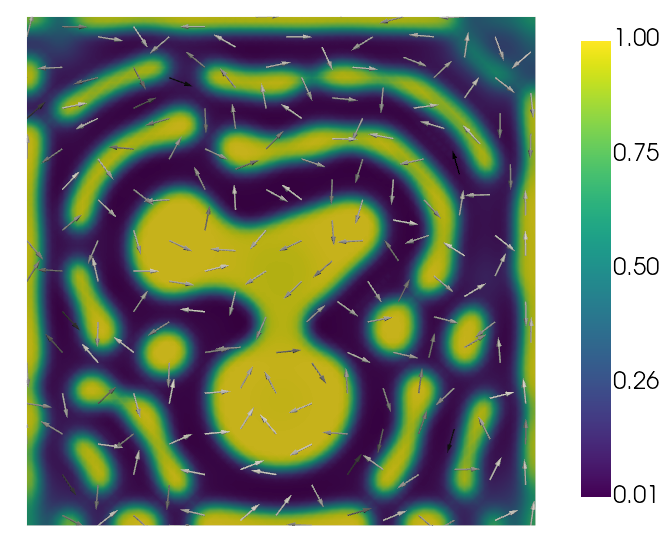}} \\
			& \rotatebox[origin=c]{90}{$K=10$} &
			\raisebox{-0.47\height}{\includegraphics[scale=0.204]{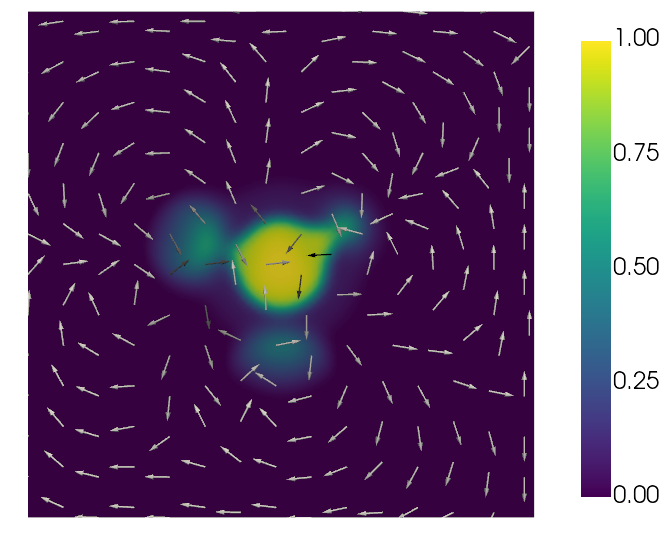}} &
			\raisebox{-0.47\height}{\includegraphics[scale=0.204]{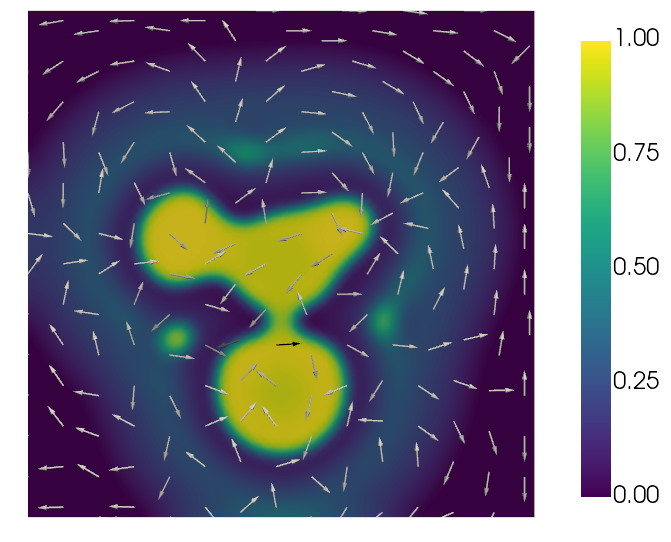}} &
			\raisebox{-0.47\height}{\includegraphics[scale=0.204]{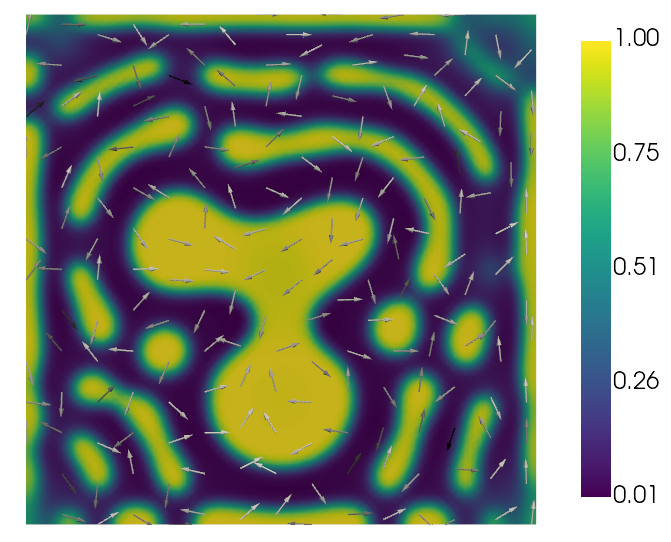}} \\
			\hdashline\vspace*{-0.5cm}\\
			\multirow{3}{*}{\vspace*{-6.2cm}\rotatebox[origin=c]{90}{\textbf{Non-symmetric \eqref{nonsymmetric_functions}}}} & \rotatebox[origin=c]{90}{$K=0.1$} &
			\raisebox{-0.47\height}{\includegraphics[scale=0.204]{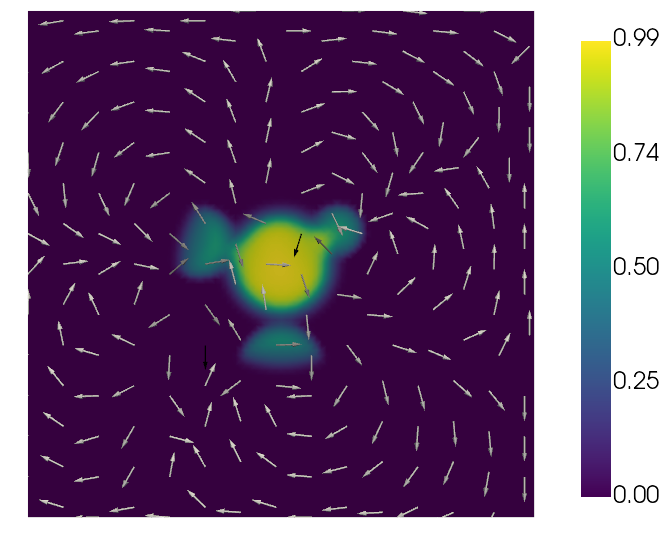}} &
			\raisebox{-0.47\height}{\includegraphics[scale=0.204]{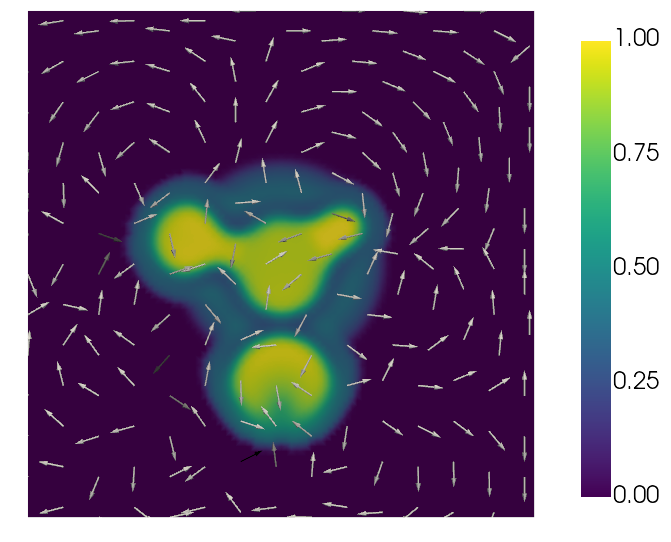}} &
			\raisebox{-0.47\height}{\includegraphics[scale=0.204]{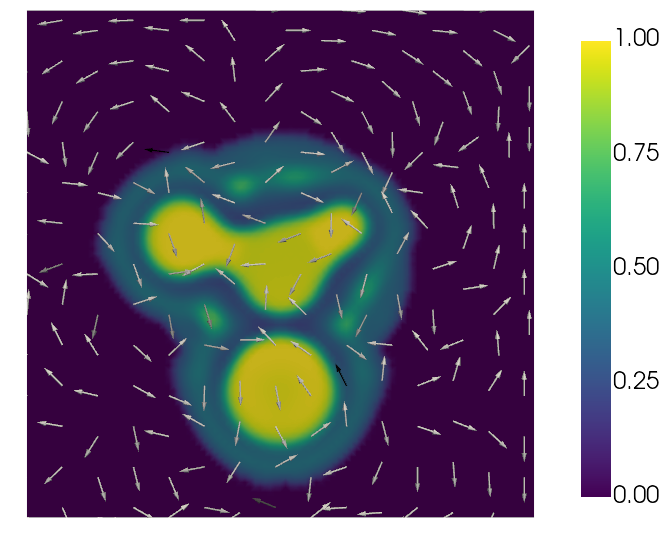}} \\
			& \rotatebox[origin=c]{90}{$K=1$} &
			\raisebox{-0.47\height}{\includegraphics[scale=0.204]{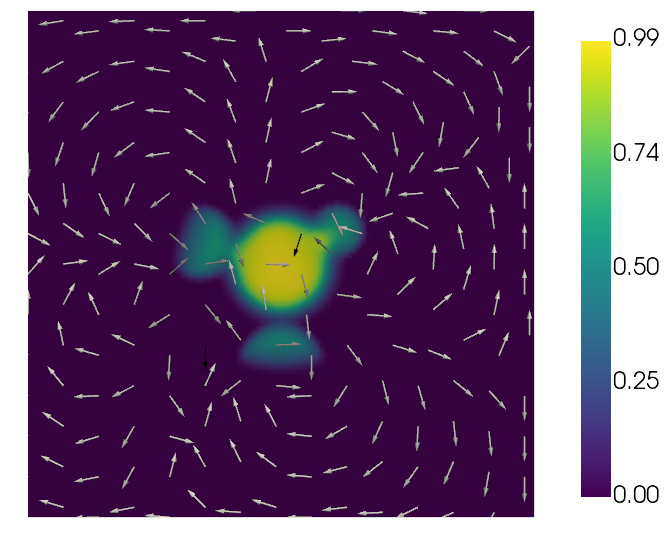}} &
			\raisebox{-0.47\height}{\includegraphics[scale=0.204]{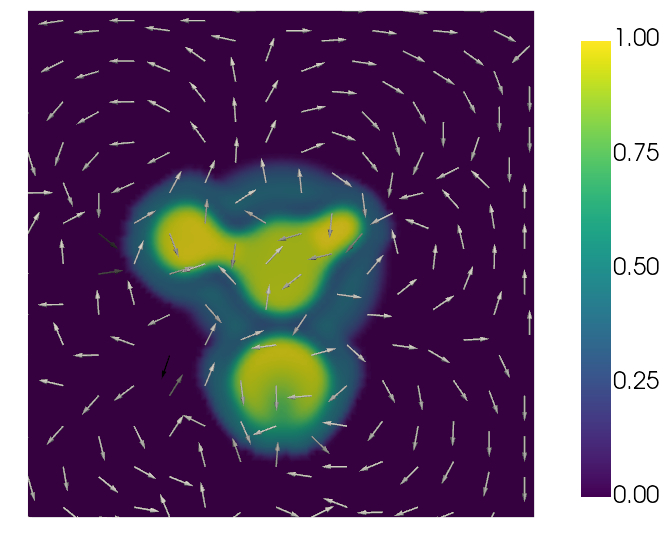}} &
			\raisebox{-0.47\height}{\includegraphics[scale=0.204]{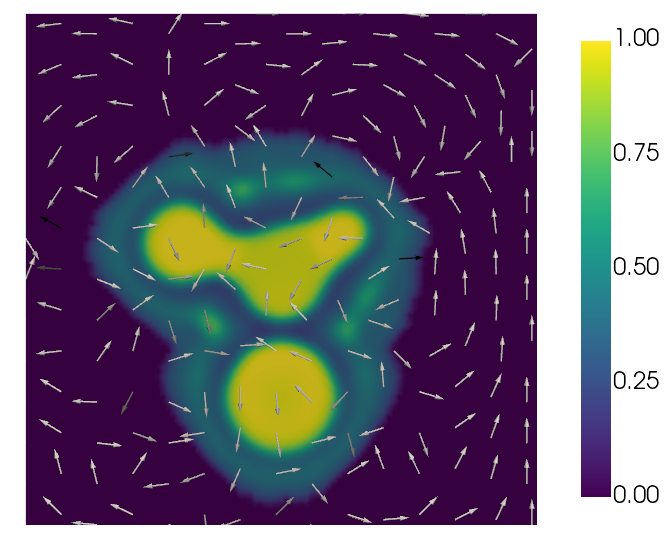}} \\
			& \rotatebox[origin=c]{90}{$K=10$} &
			\raisebox{-0.47\height}{\includegraphics[scale=0.204]{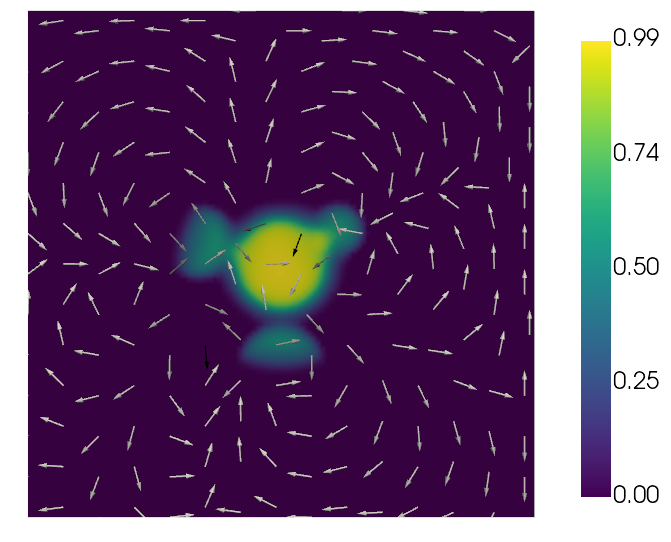}} &
			\raisebox{-0.47\height}{\includegraphics[scale=0.204]{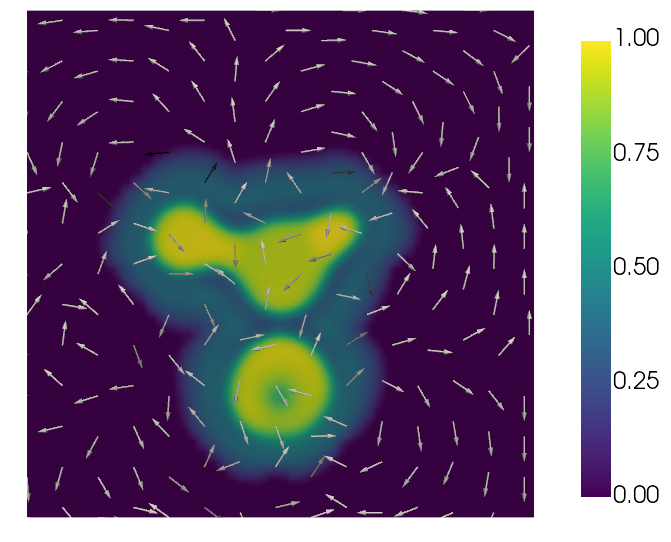}} &
			\raisebox{-0.47\height}{\includegraphics[scale=0.204]{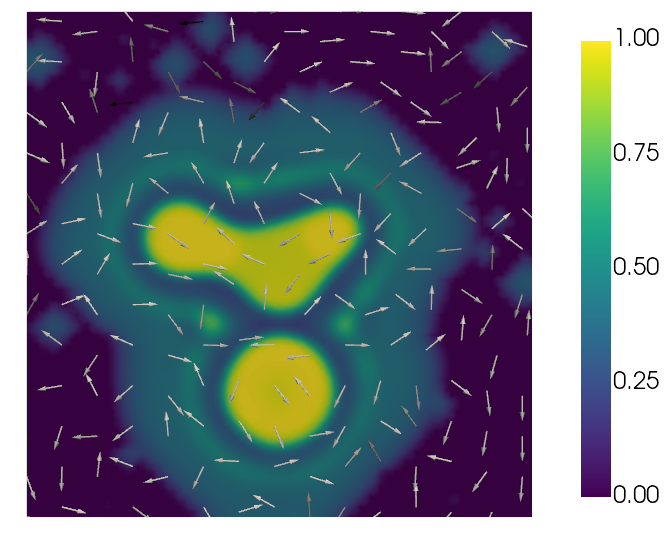}}
	\end{tabular}
	\caption{Tumor for test with $P_0=2$, $\chi_0=0.1$, $\Delta t=0.025$ at different time steps.}
	\label{fig:test-2_P0-2_u}
\end{figure}

\begin{figure}
	\centering
	\begin{tabular}{ccccc}
			&& \hspace*{-1cm}$t=10$ & \hspace*{-1cm}$t=20$ & \hspace*{-1cm}$t=50$ \\
			\multirow{3}{*}{\vspace*{-6.2cm}\rotatebox[origin=c]{90}{\textbf{Symmetric \eqref{symmetric_functions}}}} & \rotatebox[origin=c]{90}{$K=0.1$} &
			\raisebox{-0.47\height}{\includegraphics[scale=0.204]{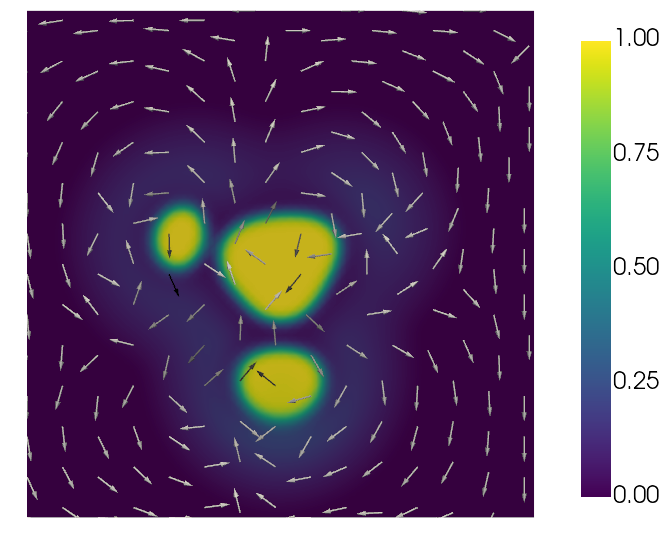}} &
			\raisebox{-0.47\height}{\includegraphics[scale=0.204]{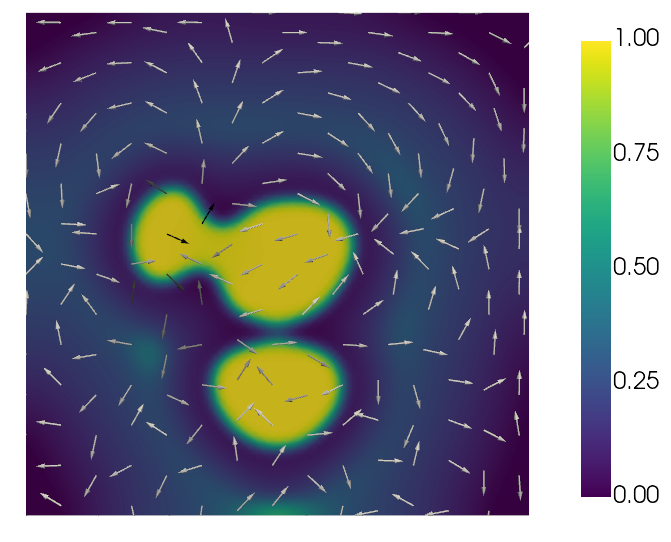}} &
			\raisebox{-0.47\height}{\includegraphics[scale=0.204]{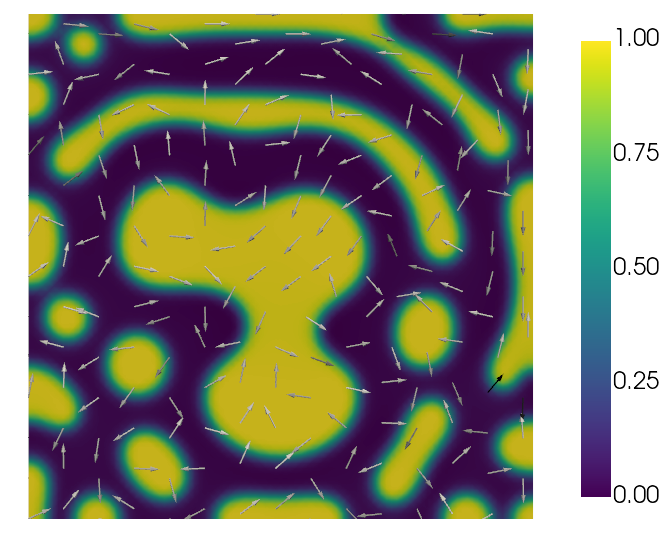}} \\
			& \rotatebox[origin=c]{90}{$K=1$} &
			\raisebox{-0.47\height}{\includegraphics[scale=0.204]{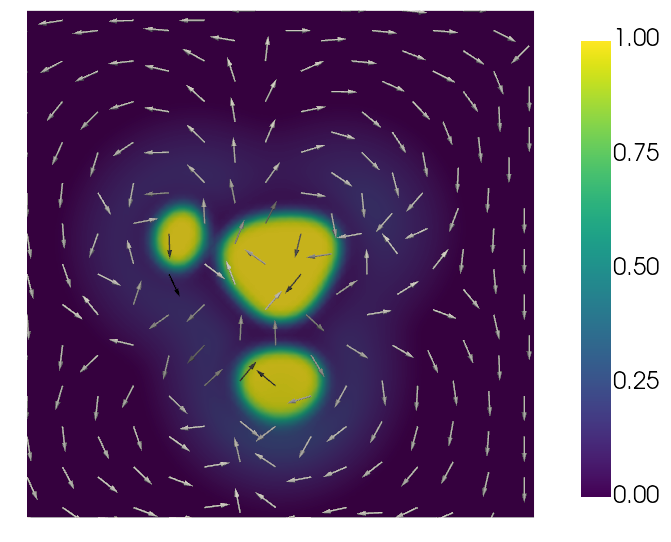}} &
			\raisebox{-0.47\height}{\includegraphics[scale=0.204]{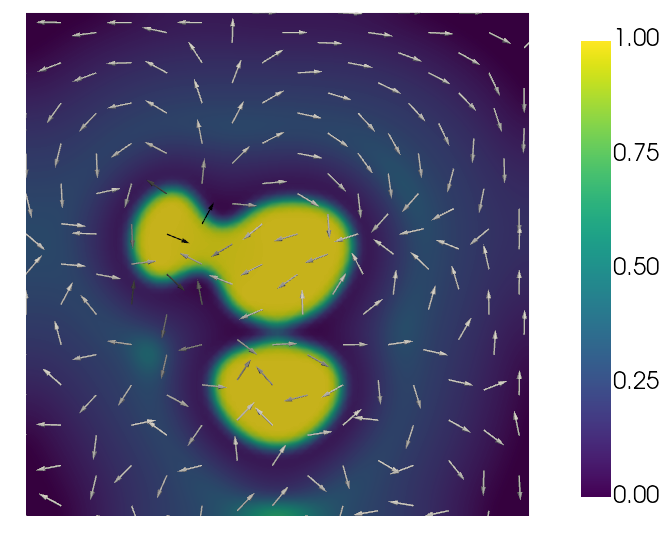}} &
			\raisebox{-0.47\height}{\includegraphics[scale=0.204]{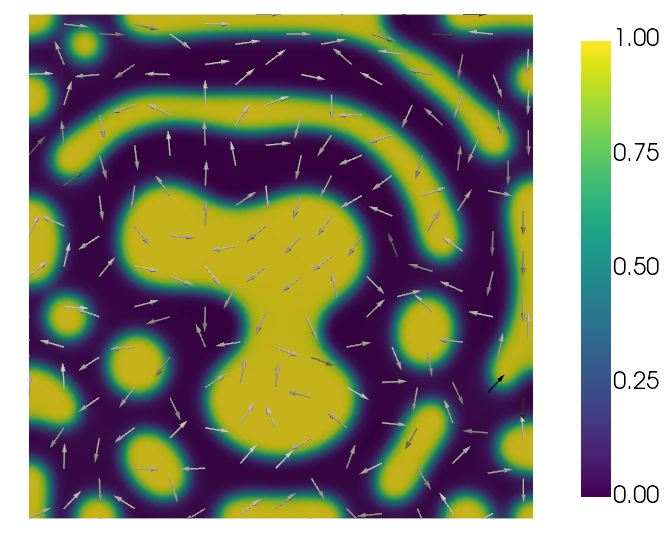}} \\
			& \rotatebox[origin=c]{90}{$K=10$} &
			\raisebox{-0.47\height}{\includegraphics[scale=0.204]{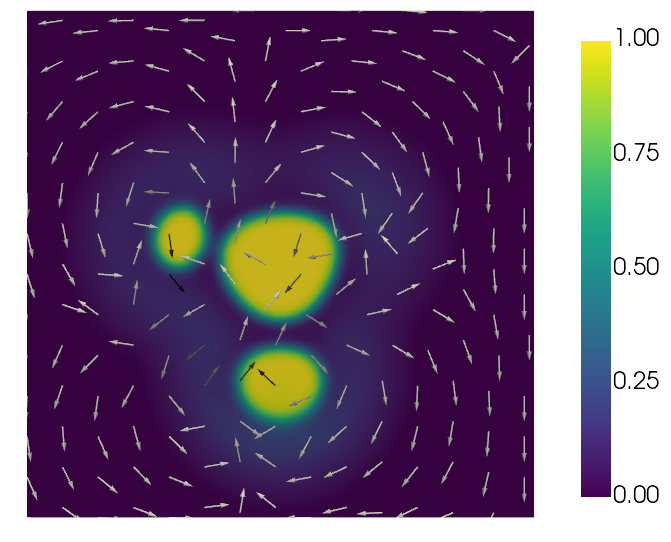}} &
			\raisebox{-0.47\height}{\includegraphics[scale=0.204]{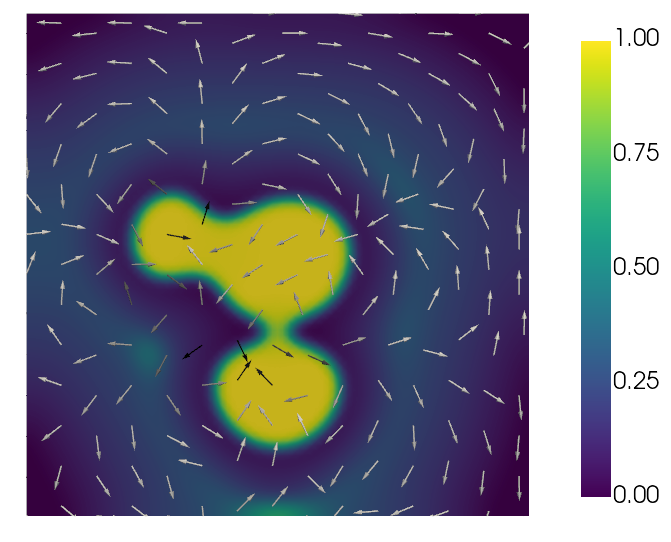}} &
			\raisebox{-0.47\height}{\includegraphics[scale=0.204]{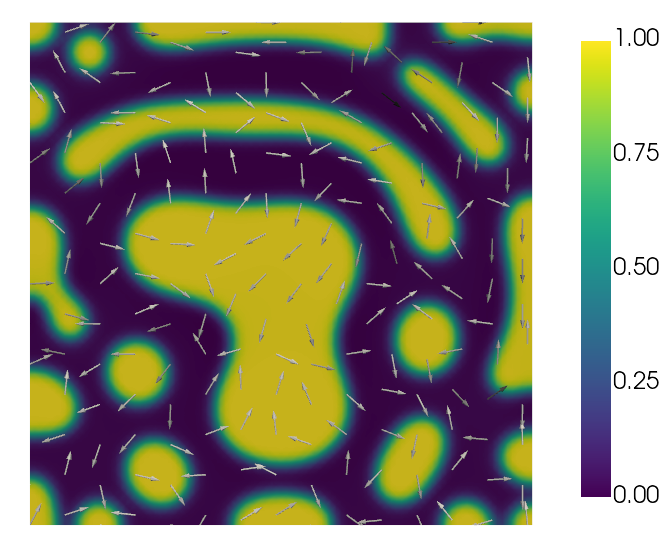}} \\
			\hdashline\vspace*{-0.5cm}\\
			\multirow{3}{*}{\vspace*{-6.2cm}\rotatebox[origin=c]{90}{\textbf{Non-symmetric \eqref{nonsymmetric_functions}}}} & \rotatebox[origin=c]{90}{$K=0.1$} &
			\raisebox{-0.47\height}{\includegraphics[scale=0.204]{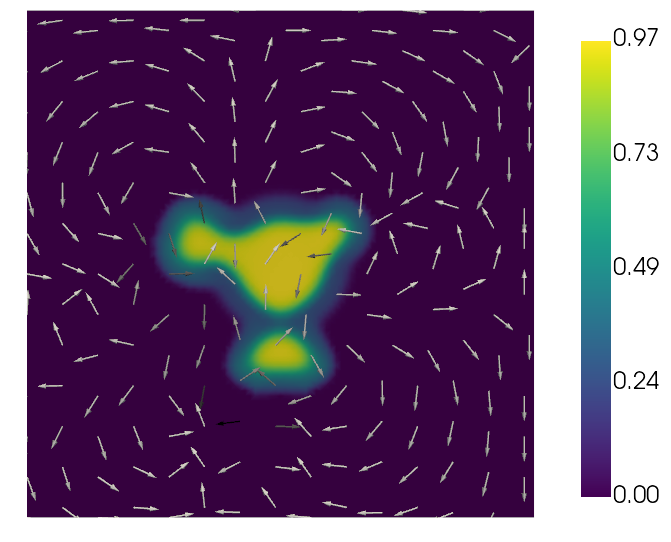}} &
			\raisebox{-0.47\height}{\includegraphics[scale=0.204]{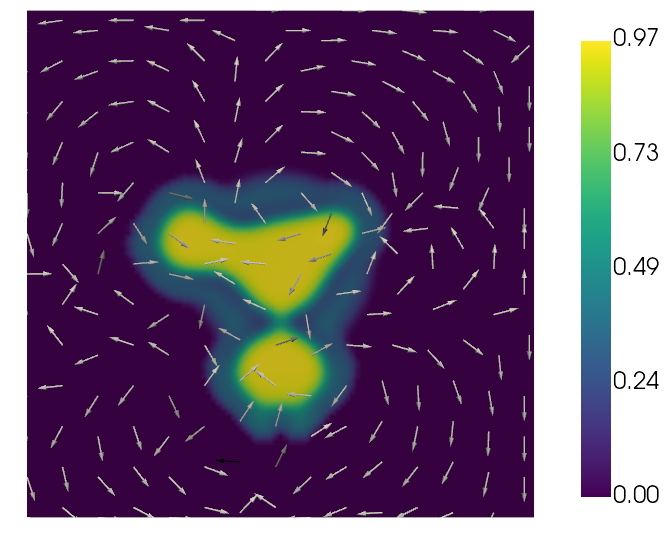}} &
			\raisebox{-0.47\height}{\includegraphics[scale=0.204]{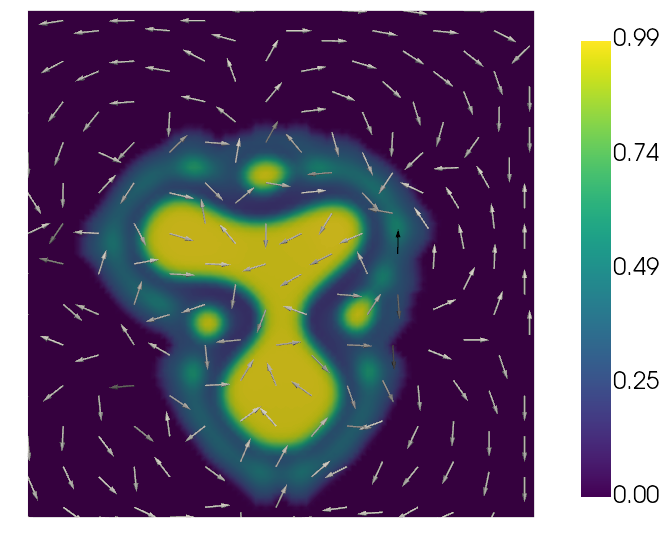}} \\
			& \rotatebox[origin=c]{90}{$K=1$} &
			\raisebox{-0.47\height}{\includegraphics[scale=0.204]{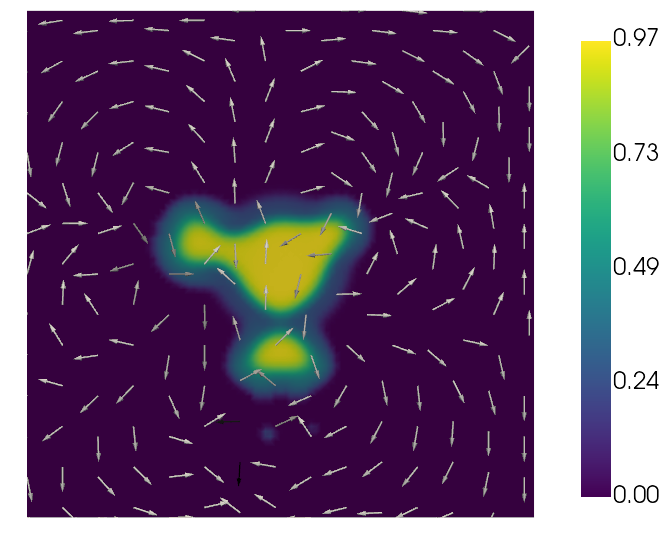}} &
			\raisebox{-0.47\height}{\includegraphics[scale=0.204]{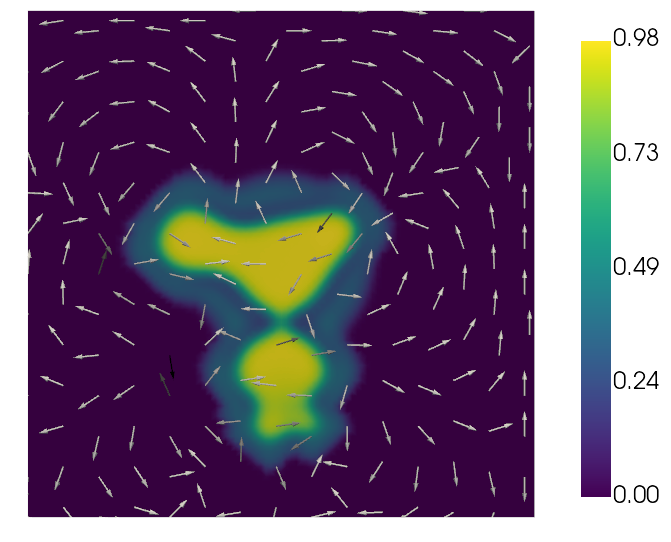}} &
			\raisebox{-0.47\height}{\includegraphics[scale=0.204]{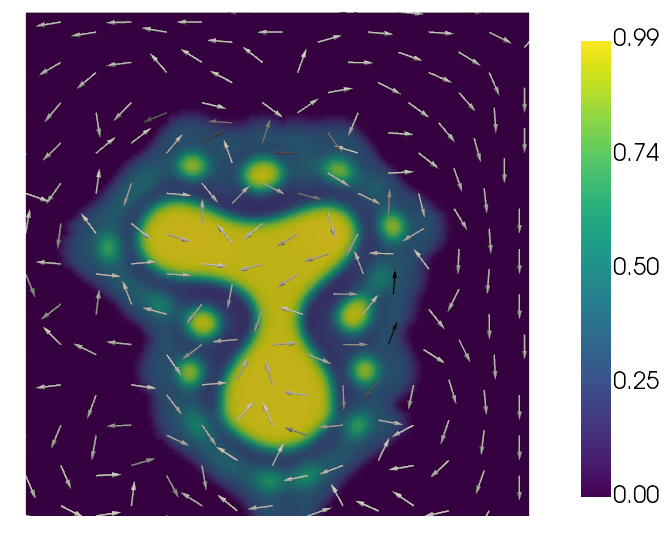}} \\
			& \rotatebox[origin=c]{90}{$K=10$} &
			\raisebox{-0.47\height}{\includegraphics[scale=0.204]{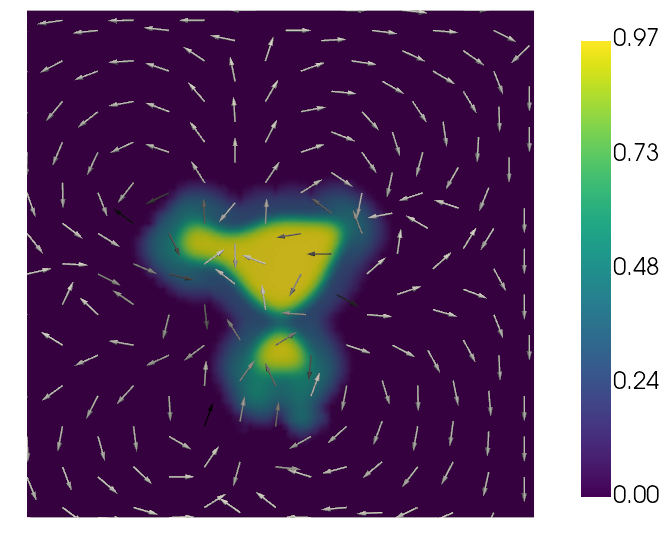}} &
			\raisebox{-0.47\height}{\includegraphics[scale=0.204]{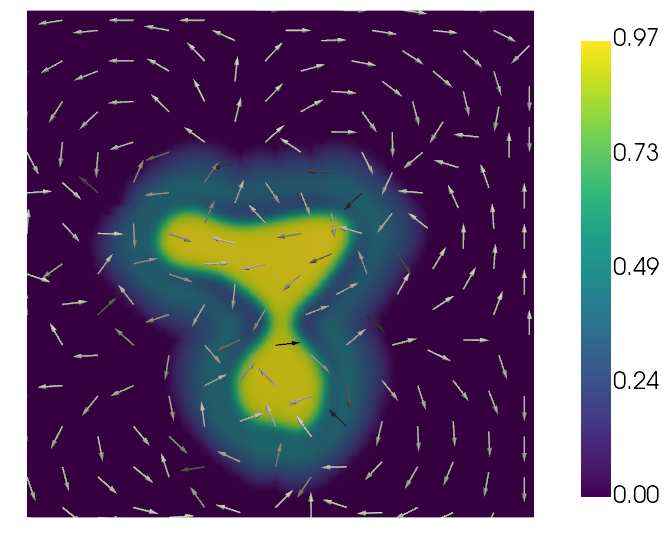}} &
			\raisebox{-0.47\height}{\includegraphics[scale=0.204]{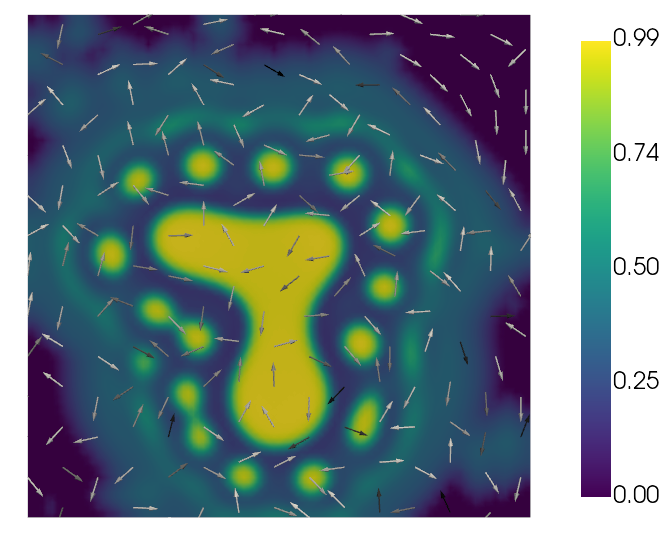}}
	\end{tabular}
	\caption{Tumor for test with $P_0=0.5$, $\chi_0=0.01$, $\Delta t=0.1$ at different time steps.}
	\label{fig:test-2_chi-0.01_u}
\end{figure}

\begin{figure}
	\centering
	\begin{tabular}{ccccc}
			&& \hspace*{-1cm}$t=3$ & \hspace*{-1cm}$t=10$ & \hspace*{-1cm}$t=17$ \\
			\multirow{3}{*}{\vspace*{-6.2cm}\rotatebox[origin=c]{90}{\textbf{Symmetric \eqref{symmetric_functions}}}} & \rotatebox[origin=c]{90}{$K=0.1$} &
			\raisebox{-0.47\height}{\includegraphics[scale=0.204]{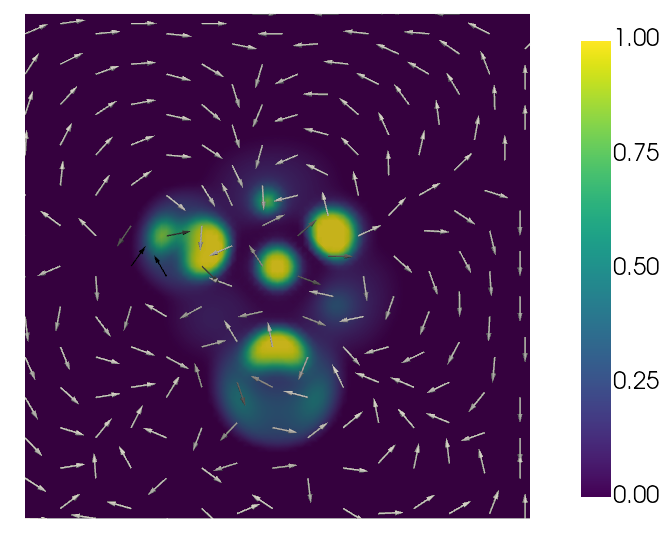}} &
			\raisebox{-0.47\height}{\includegraphics[scale=0.204]{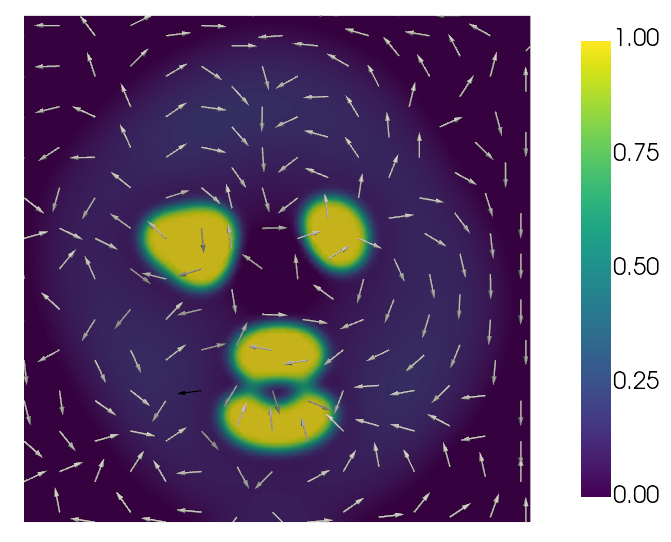}} &
			\raisebox{-0.47\height}{\includegraphics[scale=0.204]{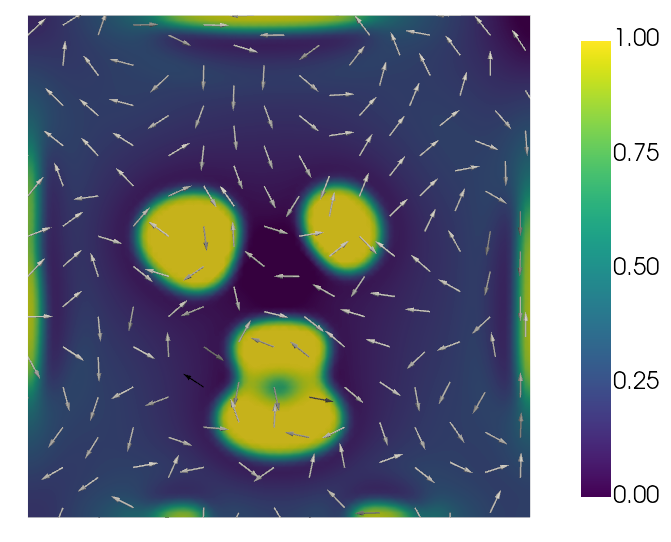}} \\
			& \rotatebox[origin=c]{90}{$K=1$} &
			\raisebox{-0.47\height}{\includegraphics[scale=0.204]{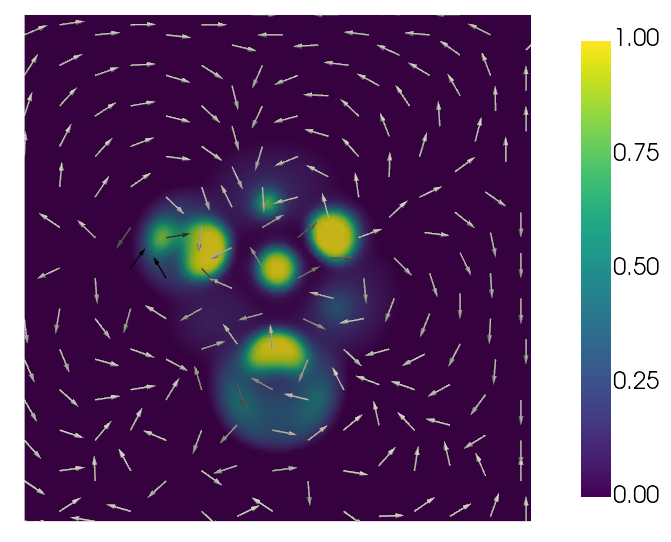}} &
			\raisebox{-0.47\height}{\includegraphics[scale=0.204]{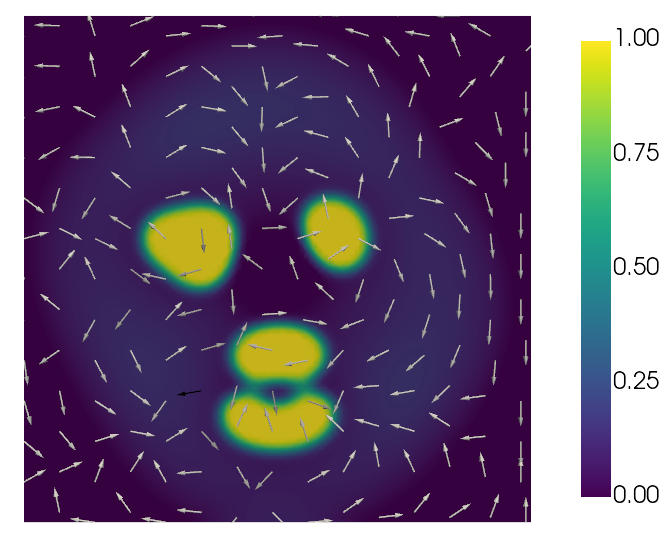}} &
			\raisebox{-0.47\height}{\includegraphics[scale=0.204]{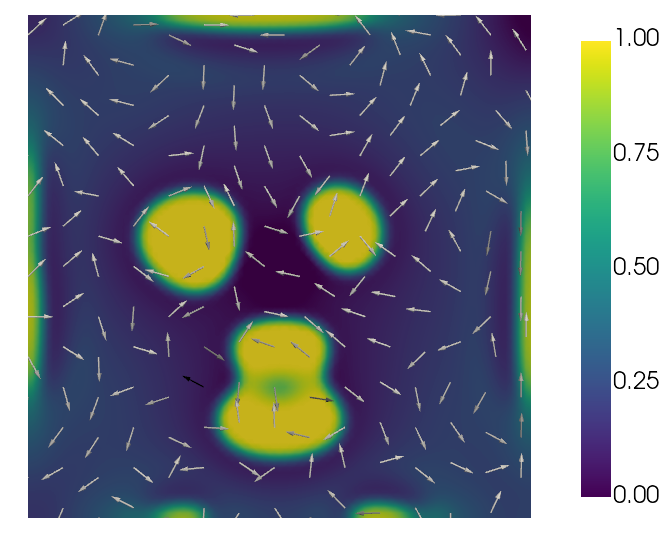}} \\
			& \rotatebox[origin=c]{90}{$K=10$} &
			\raisebox{-0.47\height}{\includegraphics[scale=0.204]{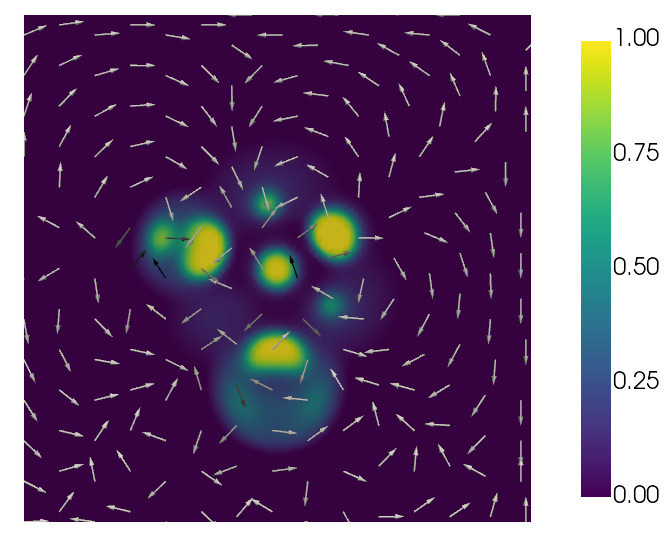}} &
			\raisebox{-0.47\height}{\includegraphics[scale=0.204]{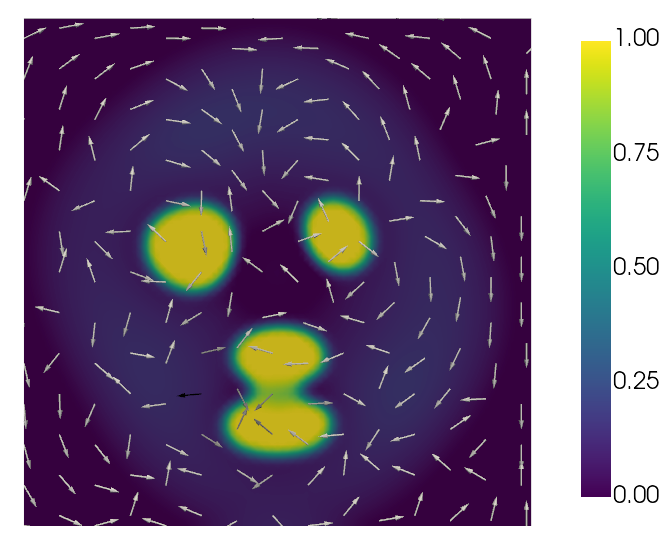}} &
			\raisebox{-0.47\height}{\includegraphics[scale=0.204]{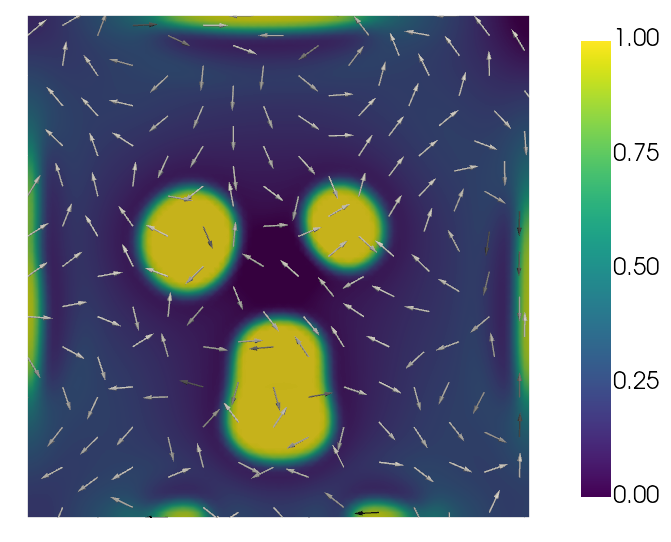}} \\
			\hdashline\vspace*{-0.5cm}\\
			\multirow{3}{*}{\vspace*{-6.2cm}\rotatebox[origin=c]{90}{\textbf{Non-symmetric \eqref{nonsymmetric_functions}}}} & \rotatebox[origin=c]{90}{$K=0.1$} &
			\raisebox{-0.47\height}{\includegraphics[scale=0.204]{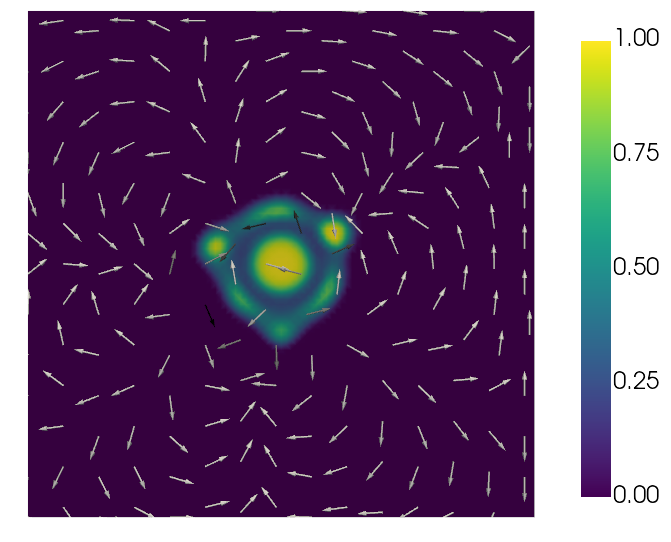}} &
			\raisebox{-0.47\height}{\includegraphics[scale=0.204]{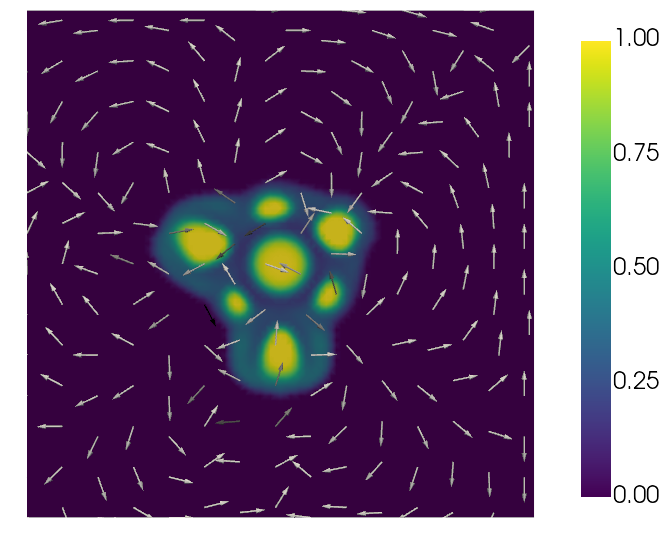}} &
			\raisebox{-0.47\height}{\includegraphics[scale=0.204]{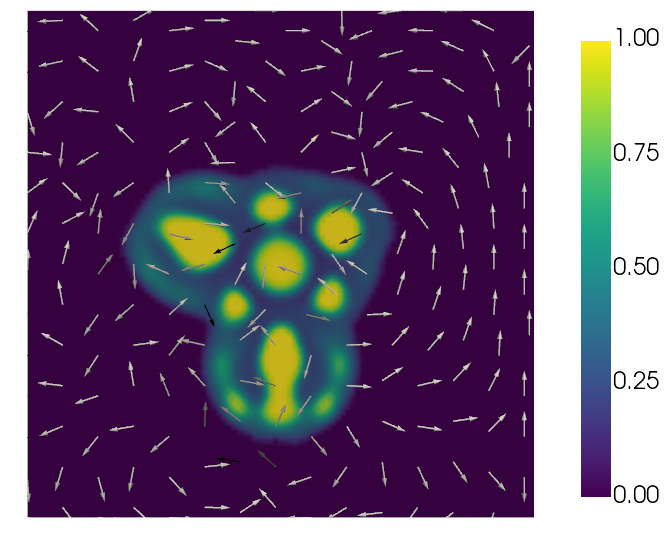}} \\
			& \rotatebox[origin=c]{90}{$K=1$} &
			\raisebox{-0.47\height}{\includegraphics[scale=0.204]{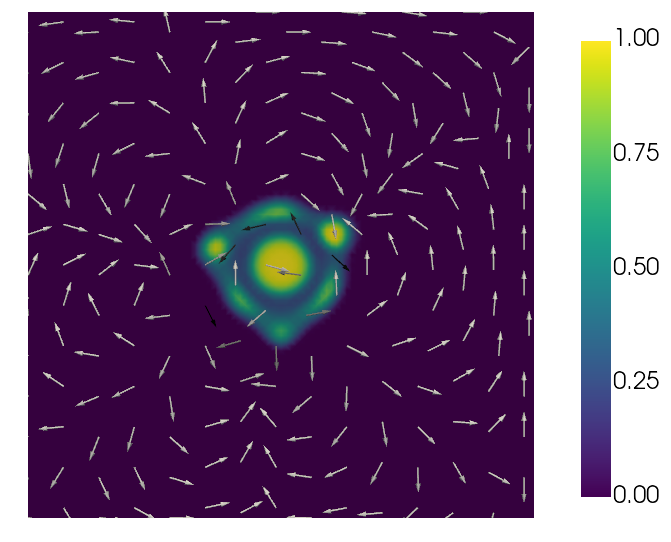}} &
			\raisebox{-0.47\height}{\includegraphics[scale=0.204]{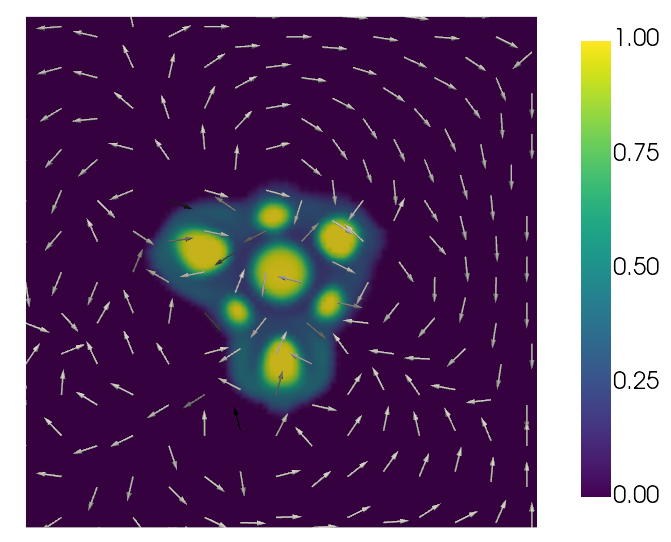}} &
			\raisebox{-0.47\height}{\includegraphics[scale=0.204]{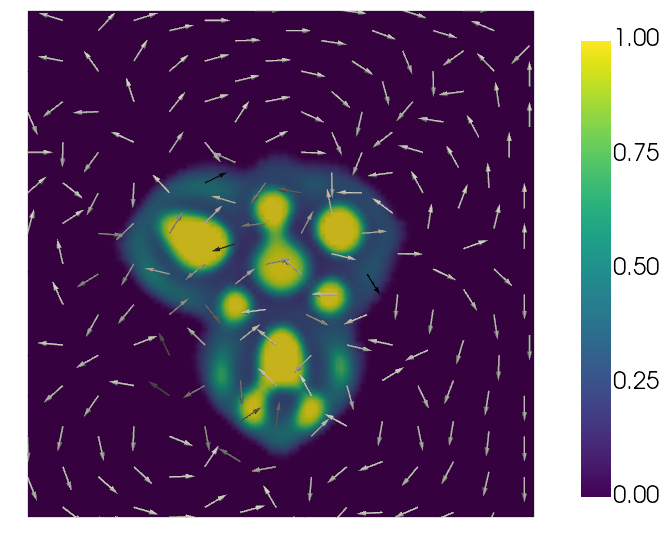}} \\
			& \rotatebox[origin=c]{90}{$K=10$} &
			\raisebox{-0.47\height}{\includegraphics[scale=0.204]{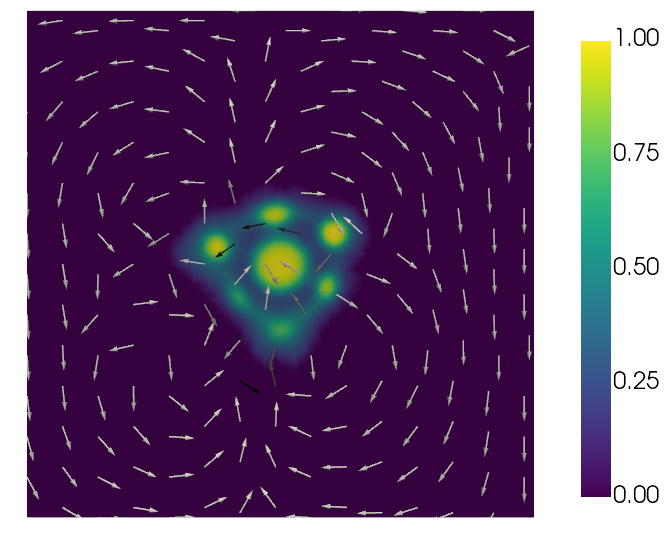}} &
			\raisebox{-0.47\height}{\includegraphics[scale=0.204]{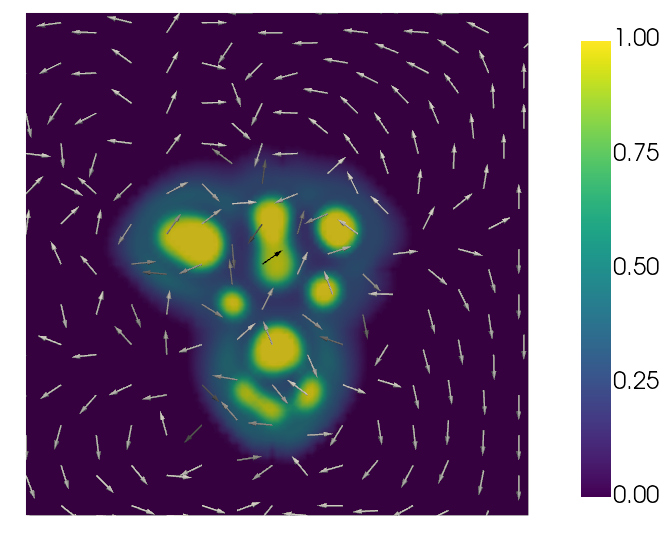}} &
			\raisebox{-0.47\height}{\includegraphics[scale=0.204]{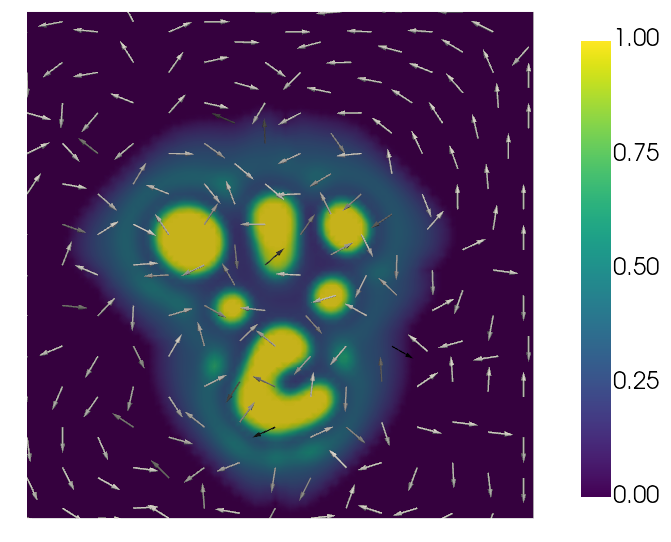}}
	\end{tabular}
	\caption{Tumor for test with $P_0=0.5$, $\chi_0=0.5$, $\Delta t=0.01$ at different time steps.}
	\label{fig:test-2_chi-0.5_u}
\end{figure}

\begin{figure}
	\centering
	\begin{tabular}{ccccc}
			&& \hspace*{-1cm}$t=2.5$ & \hspace*{-1cm}$t=5$ & \hspace*{-1cm}$t=10$ \\
			\multirow{3}{*}{\vspace*{-6.2cm}\rotatebox[origin=c]{90}{\textbf{Symmetric \eqref{symmetric_functions}}}} & \rotatebox[origin=c]{90}{$K=0.1$} &
			\raisebox{-0.47\height}{\includegraphics[scale=0.204]{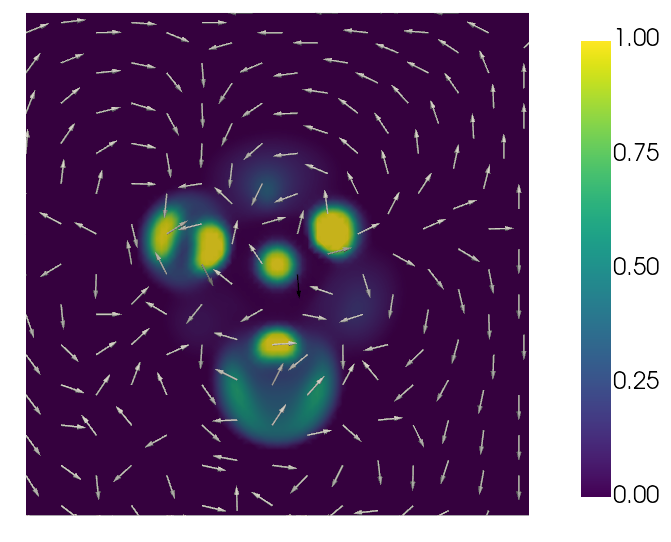}} &
			\raisebox{-0.47\height}{\includegraphics[scale=0.204]{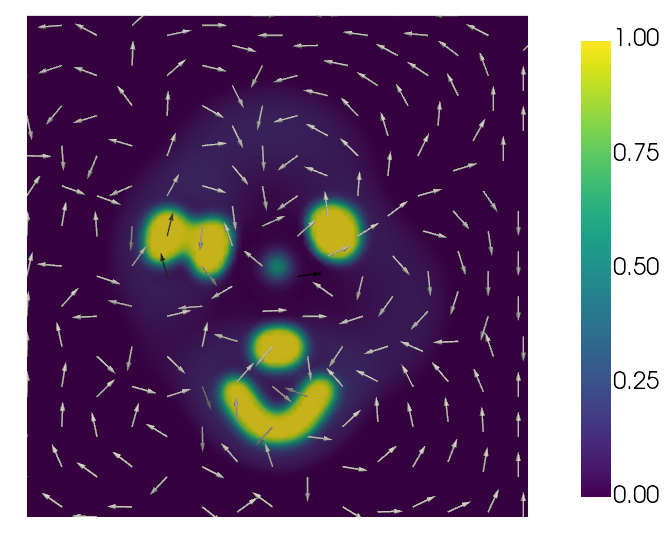}} &
			\raisebox{-0.47\height}{\includegraphics[scale=0.204]{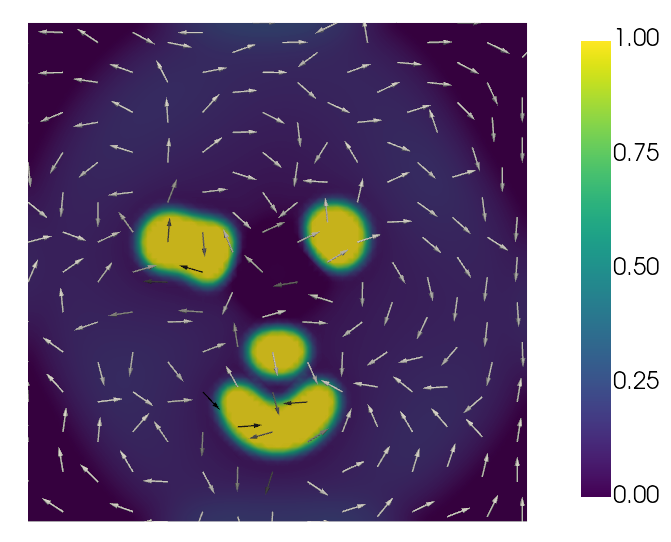}} \\
			& \rotatebox[origin=c]{90}{$K=1$} &
			\raisebox{-0.47\height}{\includegraphics[scale=0.204]{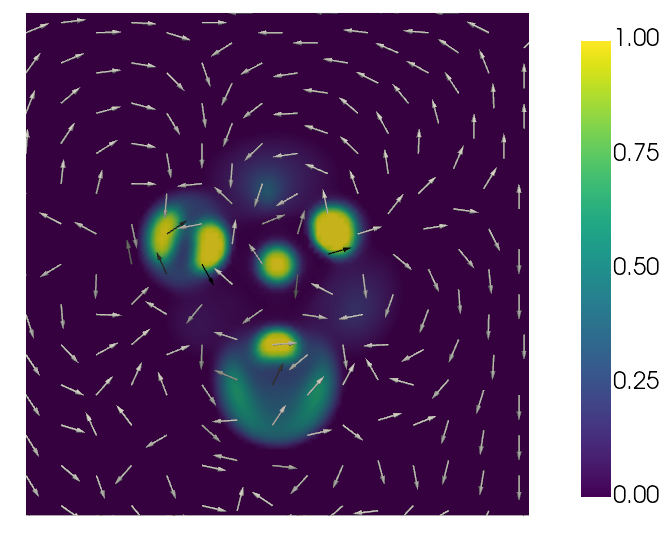}} &
			\raisebox{-0.47\height}{\includegraphics[scale=0.204]{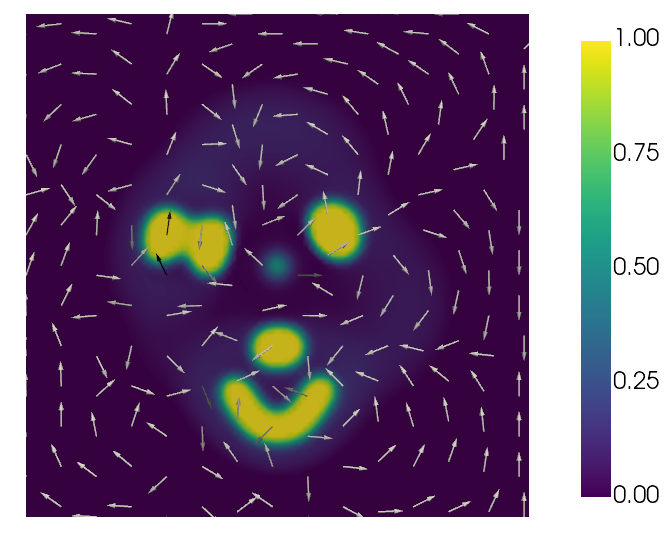}} &
			\raisebox{-0.47\height}{\includegraphics[scale=0.204]{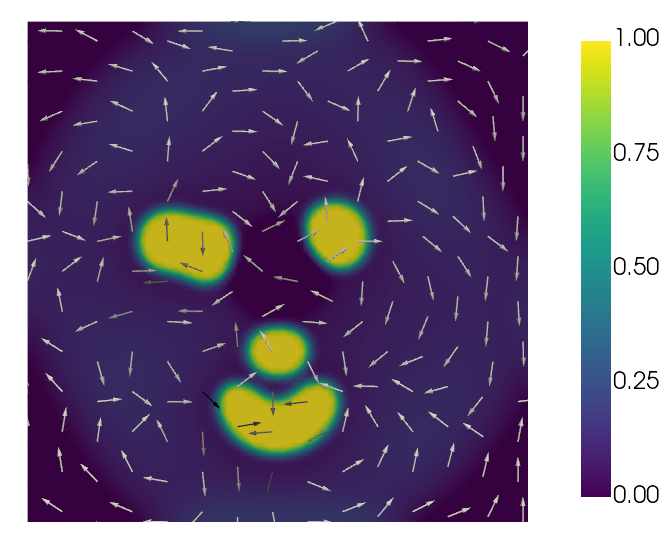}} \\
			& \rotatebox[origin=c]{90}{$K=10$} &
			\raisebox{-0.47\height}{\includegraphics[scale=0.204]{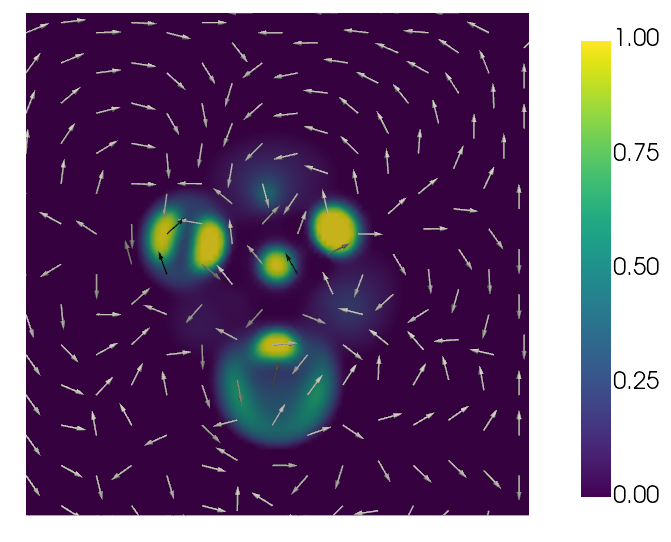}} &
			\raisebox{-0.47\height}{\includegraphics[scale=0.204]{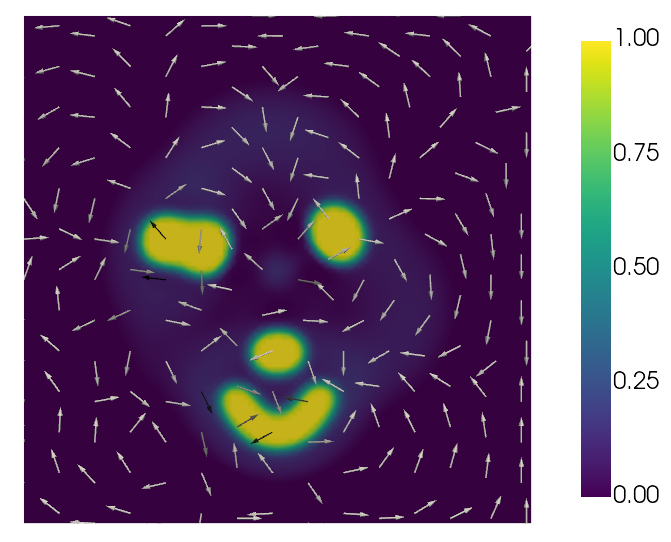}} &
			\raisebox{-0.47\height}{\includegraphics[scale=0.204]{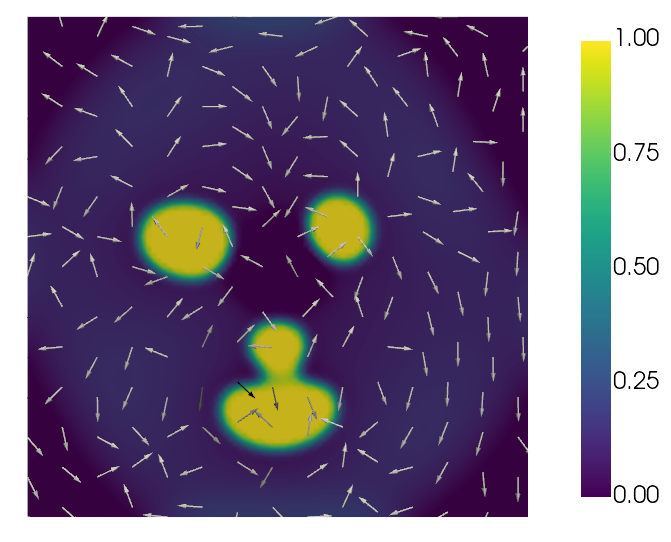}} \\
			\hdashline\vspace*{-0.5cm}\\
			\multirow{3}{*}{\vspace*{-6.2cm}\rotatebox[origin=c]{90}{\textbf{Non-symmetric \eqref{nonsymmetric_functions}}}} & \rotatebox[origin=c]{90}{$K=0.1$} &
			\raisebox{-0.47\height}{\includegraphics[scale=0.204]{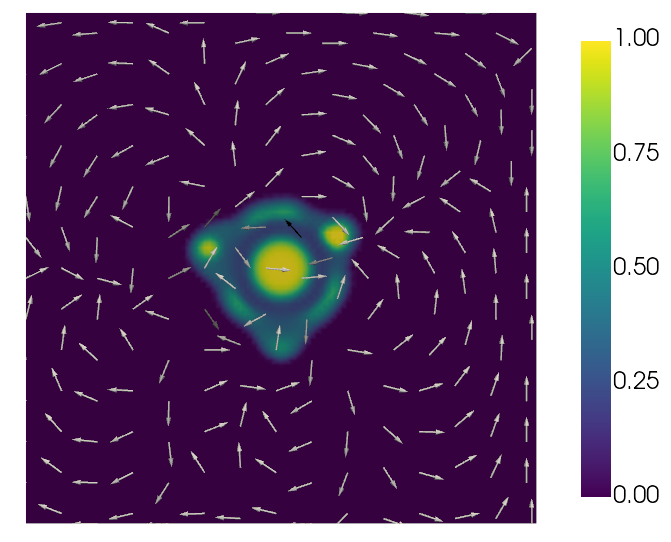}} &
			\raisebox{-0.47\height}{\includegraphics[scale=0.204]{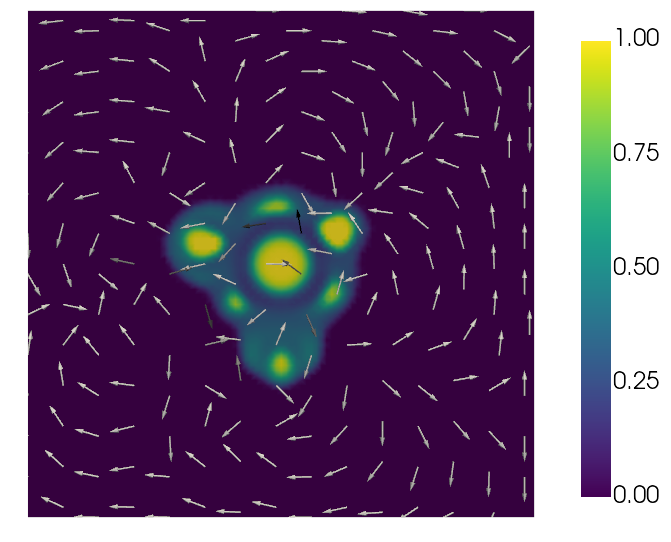}} &
			\raisebox{-0.47\height}{\includegraphics[scale=0.204]{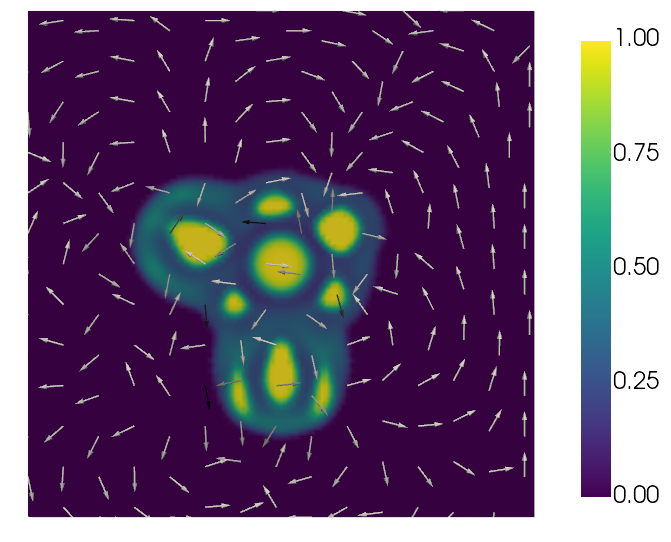}} \\
			& \rotatebox[origin=c]{90}{$K=1$} &
			\raisebox{-0.47\height}{\includegraphics[scale=0.204]{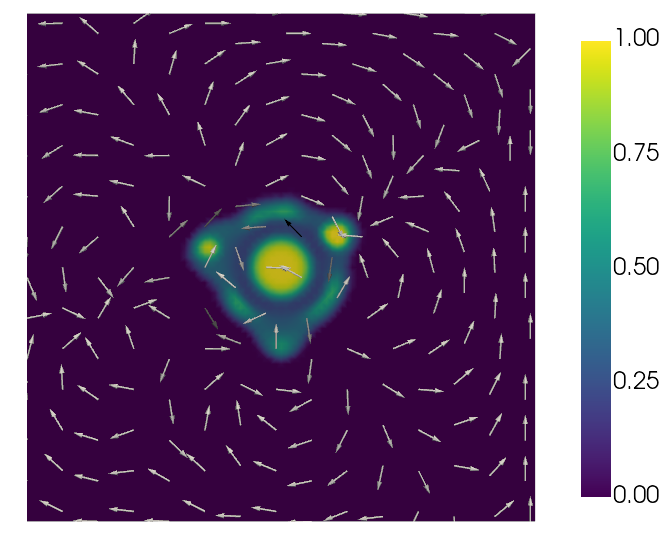}} &
			\raisebox{-0.47\height}{\includegraphics[scale=0.204]{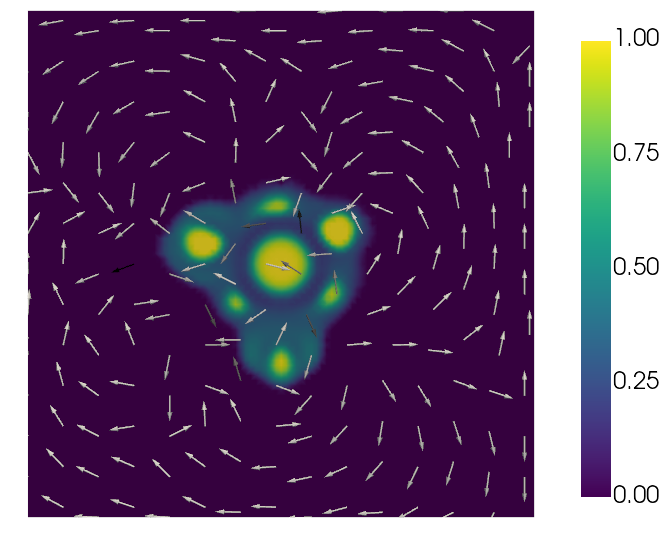}} &
			\raisebox{-0.47\height}{\includegraphics[scale=0.204]{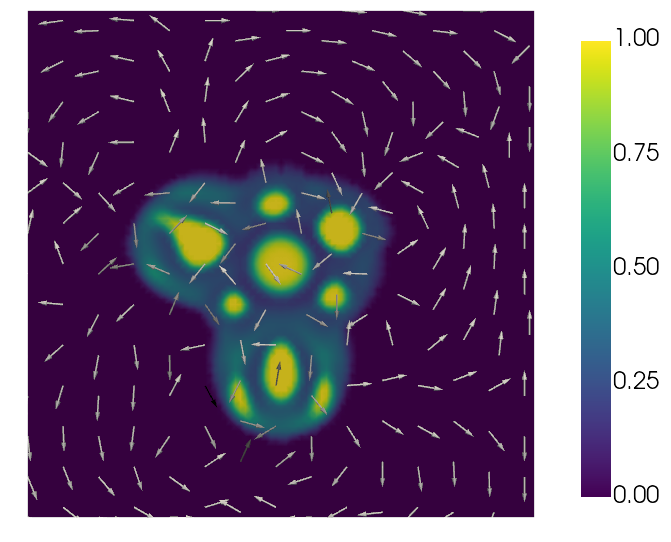}} \\
			& \rotatebox[origin=c]{90}{$K=10$} &
			\raisebox{-0.47\height}{\includegraphics[scale=0.204]{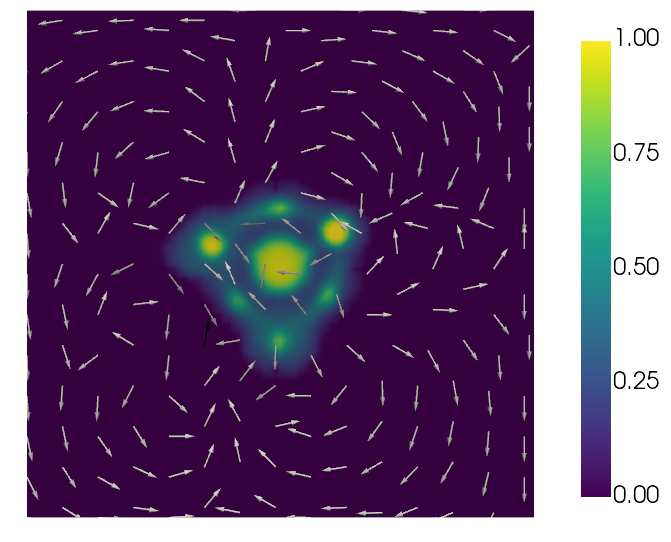}} &
			\raisebox{-0.47\height}{\includegraphics[scale=0.204]{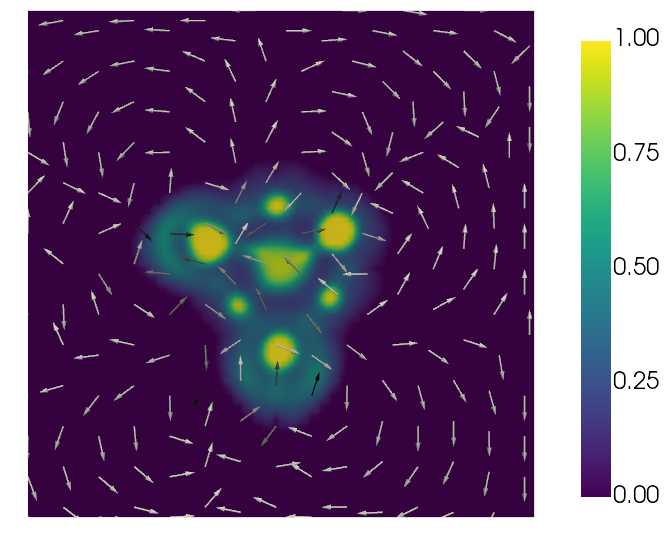}} &
			\raisebox{-0.47\height}{\includegraphics[scale=0.204]{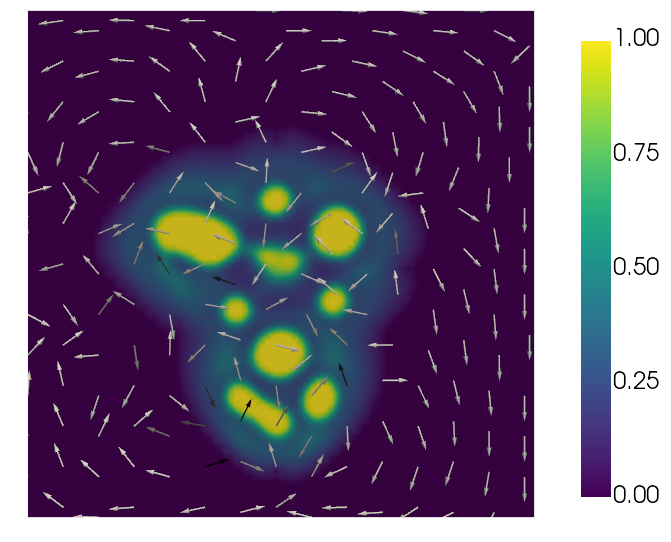}}
	\end{tabular}
	\caption{Tumor for test with $P_0=0.5$, $\chi_0=1$, $\Delta t=0.01$ at different time steps.}
	\label{fig:test-2_chi-1_u}
\end{figure}

\section*{Acknowledgements}

D. Acosta-Soba and F. Guill\'en-Gonz\'alez have been partially supported by Grant
I+D+I PID2023-149182NB-I00 funded by MICIU/AEI/10.13039/501100011033.

Also, F. Guill\'en-Gonz\'alez has been partially supported by an ERDF/EU and IMUS-Maria de Maeztu grant CEX2024-001517-M - Apoyo a Unidades de Excelencia María de Maeztu, funded by MICIU/AEI/10.13039/501100011033.

Finally, D. Acosta-Soba and R. Rodríguez-Galván have been partially supported by grants PR2024-011 and PR2024-039 funded by the Universidad de Cádiz.

\printbibliography

\end{document}